\newtheorem{thm}{Theorem}[section]
\newtheorem{cor}[thm]{Corollary}
\newtheorem{lem}[thm]{Lemma}
\newtheorem{prop}[thm]{Proposition}
\theoremstyle{definition}
\newtheorem{defn}[thm]{Definition}
\theoremstyle{remark}
\newtheorem{rem}[thm]{Remark}
\numberwithin{equation}{section}
\def\L2{L^{2}}
\def\M{\mathcal{M}}
\def\E{\mathcal{E}}
\def\D{\mathcal{D}}
\def\G{\mathcal{G}}
\def\R{\Bbb{R}}
\def\m1{^{-1}}
\def\H{\mathcal{H}}
\def\F{\mathcal{F}}
\def\O{\Omega}
\begin{document}

\title[]{Listening to the shape of a drum.}%
\author{Fabio Cipriani, Jean-Luc Sauvageot}%
\address{Dipartimento di Matematica, Politecnico di Milano, piazza Leonardo da Vinci 32, 20133 Milano, Italy.} \email{fabio.cipriani@polimi.it}
\address{Institut de Math\'ematiques, de Jussieu -- Paris Rive Gauche, CNRS-Université Denis Diderot, F-75205 Paris Cedex 13, France}
\email{jean-luc.sauvageot@imj-prg.fr}
\footnote{Work supported by Laboratoire Ypatia des Sciences MathÃ©matiques C.N.R.S. France - Laboratorio Ypatia delle Scienze Matematiche I.N.D.A.M. Italy (LYSM).}
\subjclass{58J53, 35P15}
\keywords{Dirichlet integral multiplier, quasiconformal maps, first Neumann eigenvalue.}%
\date{May 24, 2021}


\begin{abstract}
The aim of this work is to link the quasiconformal geometry of a Euclidean domain $U$ to the spectral properties of its Dirichlet integral $\D$, through the algebra of multipliers $\M(H^{1,2}(U))$ of the Sobolev space.\\
In the main result we prove that a homeomorphism $\gamma:U\to\gamma(U)$ between Euclidean domains, giving rise to an algebraic isomorphism $a\mapsto a\circ\gamma$ between $\M(H^{1,2}(\gamma(V)))$ and $\M(H^{1,2}(V))$ for any relatively compact domain $V\subseteq U$ and leaving invariant the corresponding fundamental tones (first non zero eigenvalues) of $\D$
\[
\mu_1(\gamma(V),a)=\mu_1(V,a\circ\gamma)\, ,
\]
is quasiconformal. A companion characterization hold true for bounded distortion maps.
In the converse direction we prove that for $n\ge 3$\\
i) the M\"obius group $G(\R^n)$ acts isometrically on the algebra of multipliers $\M(H^{1,2}_e(\R^n))$ of the extended space $H^{1,2}_e(\R^n)$\\
ii) $(\D,H^{1,2}(\R^n))$ is a closable quadratic form on $L^2(\R^n,\Gamma[a])$ with respect to the energy measure $\Gamma[a]=|\nabla a|^2\, dx$ of any $a\in \M(H^{1,2}_e(\R^n))$\\
iii) for any $\gamma\in G(\R^n)$, the form closure $(\D,\F^a)$ of $(\D,H^{1,2}(\R^n))$ is a Dirichlet form on $L^2(\R^n,\Gamma[a])$, unitarily equivalent to $(\D,\F^{a\circ\gamma})$ on $L^2(\R^n,\Gamma[a\circ\gamma])$.\\
The results are based on connections between fundamental tones and ergodic properties of multipliers: in particular, it is shown that the fundamental tone of
$(\D,\F^a)$ on $L^2(U,\Gamma[a])$ is non vanishing $\mu_1(U,a)>0$ for any fully supported $a\in\M(H^{1,2}(U))$, provided there exists a spectral gap for the usual Laplacian.
\end{abstract}
\vskip-2.5truecm
\maketitle
\section{Introduction and description of the results.}
In a seminal paper \cite{W}, H. Weyl showed that dimension and volume of an Euclidean domain may be traced in the asymptotic distribution of the eigenvalues of its Laplace operators. H. Weyl was motivated by questions and conjectures posed by the physicists, starting with J.W.S. Rayleigh with his famous book "The Theory of Sound" and ending with by H.A. Lorentz and A. Sommerfeld, who were concerned with problems arising in black body radiation theory and in particular by those leading to Planck's law.\\
In a as much famous paper \cite{K}, titled "Can one hear the shape of a drum?", M. Kac popularized this and related mathematical problems connecting geometry and spectrum. He noticed that the hope to characterize {\it isometrically} Euclidean domains or compact Riemannian manifolds by the spectrum of their Laplace operator is vain: J. Milnor had showed in \cite{M} the existence of non isometric 16 dimensional flat tori sharing a common spectrum (see \cite{ANPS} for a careful review of these subjects).\\
The aim of this work is to show that one has indeed some access to "the shape of a drum" in terms of the spectrum of Laplace type operators whose quadratic form is the Dirichlet integral $\D$, provided one first retains for the word {\it shape} the latin sense of {\it forma}: we will thus intend that {\it domains have the same shape if and only if they are conformal, i.e. if and only if they are  transformed one into the other by a map preserving angles}.\footnote{Late Latin: {\it conformalis = having the same shape}}
\vskip0.2truecm\noindent
To discuss the novelties we introduce in the present work to approach the above problem, we recall that in Spectral Geometry, one extracts geometric information about a bounded domain $U$ with smooth boundary, from the list of eigenvalues $\{\lambda_k(U):k\ge 0\}$ of its Laplace operator $-\Delta_N$, subjects, for example, to Neumann conditions on $\partial U$. These spectral data are the critical values on the unit sphere of $L^2(U,dx)$
of the Dirichlet integral $\D$ defined on the Sobolev space $H^{1,2}(U)$ (i.e. the quadratic form of $-\Delta_N$).\\
In this work the quasiconformal class of $U$ will be looked for in different spectral data: in synthesis, {\it we disregard overtones and concentrate on the fundamental ones}. Instead of considering the whole spectrum $\{\lambda_k(U):k\ge 0\}$ determined by $(\D,H^{1,2}(U))$ and the Lebesgue measure $dx$ on $U$, we consider the family of {\it first nonzero eigenvalues or fundamental tones}
$\mu_1(A,a)$ of $\D$ on subdomains $A\subseteq U$, endowed with the energy measures $\Gamma[a]:=|\nabla a|^2\cdot dx$ of multipliers $a\in\M(H^{1,2}(A))$ of the Dirichlet space $H^{1,2}(A)$. In other words, we consider the family of first nonzero critical values of the Dirichlet integral $(\D, H^{1,2}(A))$ on $L^2(A,\Gamma[a])$, parametrized by subdomains of $U$ and by functions in the algebra of multipliers of $H^{1,2}$(A).\\
In ultimate analysis, the quasiconformal class of $U$ will be naturally connected to the potential theory of the energy form $(\D,H^{1,2}(U))$ so that the volume measure $dx$ of $U$ plays no role.
\vskip0.1truecm\noindent
The natural framework is thus that of Dirichlet form theory, i.e. the one of a kernel-free Potential Theory based on the notion of energy. For example, the replacement of the Lebesgue measure $dx$ of $U$ by the energy measures $\Gamma[a]$ of multipliers $a$ of $H^{1,2}(U)$ is managed by the boundary theory of Dirichlet forms (\cite{CES}) and, in particular, by the process of {\it change of speed measure} and by the one of taking the {\it trace of $\D$ with respect to smooth measures}. In these processes a decisive role is played by the extended Dirichlet space $H^{1,2}_e(U)$.
\vskip0.1truecm\noindent
In connection with the quasiconformal geometry of Euclidean domains $U\subseteq\R^n$ in dimension $n\ge 3$, the multipliers algebra $\M(H^{1,2}(U))$ plays a role alternative to the one played by the Royden algebra $H^{1,n}(U)\cap L^\infty(U,dx)$ (see [Lew], [Mos]). Recall that quasiconformal transformations between Euclidean domains $\gamma:U \to V$ can be characterized algebraically as those which establishes an bounded, invertible endomorphism $a\mapsto a\circ \gamma$ between the, naturally normed, algebras $H^{1,n}(V)\cap L^\infty(V,dx)$ and $H^{1,n}(U)\cap L^\infty(U,dx)$. In this respect, for example, we will see below that the M\"obius group $G(\R^n)$ of a Euclidean space $\R^n$ having dimension $n\ge 3$, naturally acts by {\it isometries} on the algebra $\M(H^{1,2}_e(\R^n))$ of multipliers of the extended Dirichlet space.
\vskip0.1truecm\noindent
An essential difference between the algebras $\M(H^{1,2}(U))$ and $H^{1,n}(U)\cap L^\infty(U,dx)$ relies on the fact that the definition of the former does not involves, explicitly, higher order integrability of the gradient of functions, as the Sobolev space $H^{1,n}(U)$ does, nor the dimension of the Euclidean space. The multiplier algebra $\M(H^{1,2}(U))$ is intrinsic to the Dirichlet space $H^{1,2}(U)$ and reflects aspects of the potential theory of it. Its use makes explicit that the conformal geometry of a Euclidean domain underlies, its energy functional only, with no reference to its volume measure.
\vskip0.1truecm\noindent
The way by which spectral properties of multipliers enter naturally into the play lies in the fact that if $a$ is a multiplier of $H^{1,2}(U)$, i.e. it is a measurable function transforming by pointwise multiplication any $b\in H^{1,2}(U)$ into another element $ab\in H^{1,2}(U)$, and if its energy measure $\Gamma[a]$ has full support, then the Sobolev space $H^{1,2}(U)$ is naturally embedded into $L^2(U,\Gamma[a])$ and the Dirichlet integral $(\D,H^{1,2}(U))$ is closable on $L^2(U,\Gamma[a])$. One can then consider the fundamental tone $\mu_1(U,a)$ of its quadratic form closure $(\D,\F^a)$ on $L^2(U,\Gamma[a])$.
\vskip0.2truecm\noindent
In Section 2 we recall the definition of the multiplier algebra $\M(H^{1,2}(U))$ of the Sobolev space and its natural norm and seminorm. Then we define the fundamental tone $\mu_1(U,a)$ of a multiplier showing the connections of this spectral feature with ergodic properties of the Dirichlet space $H^{1,2}(U)$ such as transience and recurrence and with the spectral gap of the Neumann Laplacian of $U$. Then we bound from below $\mu_1(U,a)$ in terms of the seminorm of the multiplier $a$ and the uniform norm of the potential of the energy measure $\Gamma[a]$.
\vskip0.1truecm\noindent
In Section 3 we recall the definition of maps with bounded distortion, quasiconformal and conformal maps on Euclidean domains, as well as the definition of the M\"obius group $G(\R^n)$ of $\R^n $ and the conformal group $G(\mathbb{S}^n)$ of the standard unit sphere $\mathbb{S}^n$.
\vskip0.1truecm\noindent
In Section 4, using also the known conformal invariance of the Hardy-Littlewood-Sobolev functional, we show on $\R^n$ the {\it conformal covariance} of the Green operator and of the Dirichlet integral $\D$. Later we prove that the M\"obius group $G(\R^n)$ naturally acts as a group of {\it isometries of the multipliers algebra} $\M(H^{1,2}_e(\R^n))$ of the extended Dirichlet space $H^{1,2}_e(\R^n)$ and  that the flow of the energy measures $\Gamma[a]$ of multipliers $a\in\M(H^{1,2}_e(\R^n))$, determined by the action of the M\"obius group, determines a unitary flow among the spaces $L^2(U,\Gamma[a])$.\\
This will enable us to prove that if $\gamma\in G(\R^n)$ is a M\"obius transformation and $a\in\M(H^{1,2}_e(\R^n))$ is a nowhere constant multiplier, then the Dirichlet integral, defined on suitable natural domain $(\D,\F^a)$, is a Dirichlet form on $L^2(\R^n,\Gamma[a])$ and that it is unitarily isomorphic to the Dirichlet form $(\D,\F^{a\circ\gamma})$ on $L^2(\R^n,\Gamma[a\circ\gamma])$, determined by the transformed multiplier $a\circ\gamma\in\M(H^{1,2}_e(\R^n))$.\\
The section ends with two localized versions of this result. The first concerns the part $(\D,(\F^a)_U)$ of the Dirichlet integral on the space $L^2(U,\Gamma[a])$ of a domains $U\subset\R^n$ and with a nowhere constant multipliers $a\in\M(H^{1,2}_e(\R^n))$. The second deals with the trace of the Dirichlet integral with respect to the energy measure of any multiplier $a\in\M(H^{1,2}_e(\R^n))$.
\vskip0.1truecm\noindent
To proceed toward our main result, we recall in Section 5 the notion of {\it conformal volume} $V_c(M)$ of compact manifold $M$, introduced by Li-Yau, and a result due to Colbois-El Soufi-Savo showing how to bound the fundamental tone $\mu_1(M,\nu)$ of the manifold $M$ endowed with a weighted Riemannian measure $\nu$, in terms of its Riemannian volume $V(M)$, its conformal volume $V_c(M)$ and the total weight $\nu(M)$.
\vskip0.1truecm\noindent
Section 6 contains our main result showing that an homeomorphism $\gamma:U\to\gamma(U)$ which gives rise to an algebraic isomorphism $a\mapsto a\circ\gamma$ between
the algebras of finite energy multipliers $\F\M(H^{1,2}(\gamma(A)))$ and $\F\M(H^{1,2}(A))$ of any relatively compact domain $A\subseteq U$ and leaves invariant the corresponding fundamental tones $\mu_1(\gamma(A),a)=\mu_1(A,a\circ\gamma)$, is quasiconformal. We also provide a version of the result dealing with {\it smooth transformations} $\gamma$: in this case the hypotheses about the global invariance of the algebra of finite energy multipliers is automatically satisfied. Companion results are proved for bounded distortion maps.
\vskip0.1truecm\noindent
Section 2 Multipliers of Dirichlet integrals and their fundamental tones.\\
Section 3 Bounded distortion maps, quasiconformal and conformal maps.\\
Section 4 M\"obius transformations as automorphims of a multipliers algebra.\\
Section 5 Conformal volume and fundamental tone of multipliers.\\
Section 6 Bounded distortion of fundamental tones and bounded distortion of maps.\\
Section 7 Conclusions.
\vskip0.1truecm\noindent
{\bf Acknowledgments}. The authors wish to thanks Alessandro Savo, Universita' "La Sapienza" Roma,  for several discussions about the subjects touched by the present work.
\section{Multipliers of Dirichlet integrals and their fundamental tones}

We recall in this section, the definition of multipliers of Sobolev spaces and Dirichlet integrals on Euclidean domains. For the background material we refer to the monograph [MS]. Multipliers of general Dirichlet spaces are studied in [CS]. For the needed aspects of the theory of Dirichlet forms on locally compact spaces we refer to [CF].
\vskip0.2truecm
\subsection{Sobolev spaces}
Let $U\subseteq\mathbb{R}^n$ be a (connected) domain in an $n$-dimensional Euclidean space, endowed with its Lebesgue measure $dx$. For any real $p\ge 1$ and integer $m\ge 1$, we will denote by $L^p(U,\mathbb{R}^m,dx)$, or simply by $L^p(U,\mathbb{R}^m)$, the usual space of functions whose components are $p$-integrable on $U$.
\par\noindent
The Sobolev space $H^{1,p}(U,\mathbb{R}^m)$ is the set of functions $\gamma\in L^p(U,\mathbb{R}^m)$ which are "absolutely continuous along lines" $\gamma\in {\rm ACL\,}(U,\mathbb{R}^m)$ and such that their first order derivative $\gamma^\prime=[\partial_j \gamma_i]_{i,j=1}^{n,m}$, which is then defined a.e. in $U$, are $p$-integrable $\gamma^\prime\in L^p(U,\mathbb{R}^{m\times n})$.
\par\noindent
The Sobolev space $H^{1,p}_{\rm loc}(U,\mathbb{R}^m)$, $p\ge 1$, is the set of functions in $L^p_{\rm loc}(U,\mathbb{R}^m)$
which are "absolutely continuous along lines" $\gamma\in {\rm ACL\,}(U,\mathbb{R}^m)$ and such that their derivatives are locally $p$-integrable $\gamma^\prime\in L^p_{\rm loc}(U,\mathbb{R}^{m\times n})$.
\par\noindent
Let us  recall also the definition of the space of Beppo Levi functions (see e.g. [CF, 2.2.4])
\[
{\rm BL}(U):=\{b\in {\rm ACL}(U): \nabla b\in L^2(U,\R^n)\}
\]
and its quotient space by the subspace of constant functions ${\overset{\circ}{{\rm BL}}(U)}$. The seminorm $a\mapsto \sqrt{\D[a]}$ on ${\rm BL}(U)$, is a complete norm on ${\overset{\circ}{{\rm BL}}(U)}$ and the following identification holds true
\[
H^{1,2}(U)={\rm BL}(U)\cap L^2(U,dx)\, .
\]
The space of Beppo Levi functions can also be identified with a space of Schwartz distributions
\[
{\rm BL}(U)=\{b\in \mathcal{D}'(U):\nabla b\in L^2(U,\R^n)\}
\]
where the derivatives are taken in the distribution sense. Distributions in ${\rm BL}(U)$ can be identified with functions in $L^2_{\rm loc}(U,dx)$ so that we have too
\[
{\rm BL}(U)=\{b\in L^2_{\rm loc}(U,dx):\nabla b\in L^2(U,\R^n)\}\, .
\]
\subsection{Dirichlet integrals and their energy measures (carr\'e du champ)}
A distinguished role is played, in this work, by the Sobolev space $H^{1,2}(U)$ as form domain of the Dirichlet integral $(\D,H^{1,2}(U))$
\begin{equation}\label{DF}
  \D[b]:=\int_U |\nabla b(x)|^2\cdot dx\qquad b\in H^{1,2}(U)\, ,
\end{equation}
a closed, symmetric, real, quadratic form on the Hilbert space $L^2(U,dx)$, whose associated self-adjoint, nonnegative operator
$(L,D(L))$ is an extension of the Laplace operator $(-\Delta,C^\infty_c(U))$. It possesses the characteristic contraction property, called Markovianity,
\[
\D[b\wedge 1]\le\D[b]\qquad b=\bar{b}\in H^{1,2}(U)
\]
which makes the semigroup $\{e^{-tL} :t\ge 0\}$ Markovian, in the sense that it is positivity preserving and contractive on each Lebesgue spaces $L^p(U,dx)$, for any $p\in [1,+\infty]$.
\par\noindent
The Sobolev space $H^{1,2}(U)$, when  considered as a Hilbert space under the {\it graph norm}
\[
\|b\|_{H^{1,2}(U)}:=\sqrt{\D[b]+\|b\|^2_{L^2(U)}}\qquad b\in H^{1,2}(U)\, ,
\]
will be called the {\it Dirichlet space} relative to the Dirichlet integral $(\D,H^{1,2}(U))$ on $L^2(U,dx)$. Any locally finite-energy function $a\in H^{1,2}_{\rm loc}(U)$ determines a positive Radon measure on $U$
\[
\Gamma[a]:=|\nabla a|^2\cdot dx\, ,
\]
called its {\it energy measure} or the {\it carr\'e du champ}.
\vskip0.2truecm\noindent
The Dirichlet integral is the archetypical example of a {\it Dirichlet form}, whose general definition we now briefly recall. Given a locally compact, metrizable Hausdorff space, endowed with a positive Radon measure of full topological support $(X,m)$, a {\it Dirichlet form} $(\E,\F)$ with respect to $L^2(X,m)$ is a densely defined, real, lower semicontinuous quadratic form $\E:\F\to [0,+\infty)$ which is Markovian in the sense that
\[
\E[b\wedge 1]\le\E[b]\qquad b={\bar b}\in\F\, .
\]
The form domain $\F$, when endowed with the graph norm $\|b\|_\F :=\sqrt{\E[b]+\|b\|^2_{L^2(U,m)}}$, is a Hilbert space called the {\it Dirichlet space} relative to the Dirichlet form $(\E,\F)$ on $L^2(X,m)$.
\vskip0.1truecm\noindent
The Dirichlet forms considered in this paper will be suitable variations of the Dirichlet integral $(\D,H^{1,2}(U))$ on $L^2(U,dx)$. In several situations, the quadratic form  $\E$ will always be given by the Dirichlet integral $\D$ whereas the reference measures $m$ will be suitable positive Radon measures, absolutely continuous with respect to the Lebesgue's one. Also, beside $H^{1,2}(U)$, different choices of form domains $\F\subset L^2(U,m)$ will be considered.
\subsection{Extended Dirichlet spaces and random time change}
The {\it extended Dirichlet space} $\F_e$ of a Dirichlet space $(\E,\F)$ on $L^2(X,m)$ is the set of $m$-equivalence classes  of measurable functions $b$ on $X$, which are finite $m$-a.e. and for which there exists a $\D$-Cauchy sequence $b_n\in\F$ in the sense that $\D[b_n-b_m]\to $ as $n,m\to +\infty$,  such that $b_n\to b$ pointwise $m$-a.e.. For all approximating sequence, the limit $\E[b]:=\lim_n\E[b_n]$ exists and attains the same value, thus defining an extension of the quadratic form $\E$ to $\F_e$. Moreover, one recovers the Dirichlet space as $\F:=\F_e\cap L^2(X,m)$.
\vskip0.1truecm\noindent
A $m$-measurable set $A\subseteq X$ is said to be {\it invariant} for $(\E,\F)$ if one has
\[
1_A\cdot b\in\F\, ,\qquad \E[b]=\E[1_A\cdot b]+\E[1_{A^c}\cdot b]\qquad b\in\F\, .
\]
The Dirichlet form is said to be {\it irreducible} if $X$ and $\emptyset$ are the only invariant set and it is said to be {\it transient} if $b\in\F_e$ and $\E[b]=0$ implies $b=0$. In the latter situation $\F_e$ is a Hilbert space under the norm $\sqrt{\E[\cdot]}$. A Dirichlet form $(\E,\F)$ on $L^2(X,m)$ is said to be {\it recurrent} if $1\in\F_e$ and $\E[1]=0$. An irreducible Dirichlet space which is not recurrent is necessarily transient and viceversa.
\vskip0.1truecm\noindent
In this paper we are firstly concerned with the Dirichlet space $(\D,H^{1,2}(U))$ on $L^2(U,dx)$. As the open set $U$ is assumed to be connected, the Dirichlet space is irreducible. In general $H^{1,2}_e(U)\subseteq {\rm BL(U)}$ for all domains $U\subseteq \mathbb{R}^n$ and any dimension $n\ge 1$. In particular, when $U\subseteq \mathbb{R}^n$ in dimension $n=1,2$ or when the volume $|U|$ is finite, the Dirichlet space is recurrent and $H^{1,2}_e(U)={\rm BL(U)}$. Moreover, $(\D,H^{1,2}(\mathbb{R}^n))$ on $L^2(\mathbb{R}^n)$ is transient if and only if $n\ge 3$ and, in this case, $H^{1,2}_e(\mathbb{R}^n)\simeq{\overset{\circ}{{\rm BL}}(\mathbb{R}^n)}$.
\vskip0.1truecm\noindent
Dirichlet forms are in one to one correspondence to Markovian semigroups $\{e^{-tL}:t>0\}$ on $L^2(X,m)$ with self-adjoint, nonnegative generator
$(L,D(L))$, through the relation
\[
\E[b]=\|\sqrt{L}b\|^2_{L^2(X,m)}\qquad b\in\F= D(\sqrt L)\, .
\]
When the Dirichlet form is {\it regular} in the sense that $\F\cap C_0(X)$ is dense in the Dirichlet space $\F$ (i.e. it is a form core) and it is uniformly dense in the Banach space $C_0(X)$ of continuous functions vanishing at infinity, one considers a Choquet capacity defined initially as
\[
{\rm Cap}_1(O):=\inf\{\|b\|^2_\F:b\in\F, b\ge {\bf 1}_O,\,\,{\rm m}-a.e.\,\,{\rm on}\,\, O\}
\]
on open sets $O\subseteq X$ and then extended to Borel sets $B\subseteq X$ as
\[
{\rm Cap}_1(B):=\inf\{{\rm Cap}_1(O):O\,\,{\rm open}\,\,,B\subseteq O\}\, .
\]
The set function $\rm Cap_1$ allows to define the class of negligible Borel sets $B$ of the potential theory of Dirichlet forms as those having vanishing capacity ${\rm Cap}_1(B)=0$. Capacity zero sets, also termed {\it $\E$-polar set}, necessarily have vanishing measure $m(B)=0$. Properties valid {\it except}, possibly, on a $\E$-polar set are said to subsist {\it quasi-everiwhere}, abbreviated {\it q.e.}. In the transient case the same class of negligible sets can be described using the Choquet capacity $\rm Cap_0$, defined analogously to $\rm Cap_1$ except for the fact that one minimizes the quadratic form $\E$ instead that the quadratic form $\|\cdot\|^2_\F :=\E+\|\cdot\|^2_2$.\\
To any regular Dirichlet form on $L^2(X,m)$ is associated an essentially unique, $m$-symmetric Hunt stochastic process on $X$. Irreducibility, recurrence and transience of a Dirichlet spaces then have a natural probabilistic dynamical interpretations in terms of the associated process.
\vskip0.1truecm\noindent
When the domain $U\subset\R^n$ has {\it continuous boundary} $\partial U$, in the sense of Maz'ya ([Ma Theorem 2 page 14]), the Dirichlet form $(\D,H^{1,2}(U))$ on $L^2(U\cup\partial U,1_U\cdot dx)\simeq L^2(U,dx)$ is regular and the associated Hunt processes is the Brownian motion in $U\cup\partial U$ {\it reflected at the boundary} $\partial U$ ([CF Ch. 6]).\\
On any open region $U\subseteq\R^n$, if the domain of the Dirichlet integral $\D$ on $L^2(U,dx)$ is restricted to the subspace $H^{1,2}_0(U)$ defined as the closure of $C^\infty_c(U)$ in the Sobolev space $H^{1,2}(U)$, a regular Dirichlet form is obtained whose associated Hunt process is the Brownian motion in $U$ {\it absorbed at the boundary} $\partial U$.
\subsection{Part and trace of a Dirichlet integral, random time change}
Here we briefly recall three fundamental ways to perturb a Dirichlet space $(\E,\F)$ on $L^2(X,m)$ and its associated process, that we will use in the forthcoming sections.\\
If $U\subseteq X$ is a fixed open set and one defines $\F_U:=\{b\in\F: b=0\,\,{\rm q.e.}\,\, {\rm on}\,\, U^c\}$, then $(\E,\F_U)$ is a regular Dirichlet form on $L^2(U,m)$, called the {\it part of the Dirichlet form $(\E,\F)$} on $U$. For example, for any domain $U\subset\R^n$, the Dirichlet integral $(\D,H^{1,2}_0(U))$ on $L^2(U,dx)$ introduced above, can be described as the part on $U$ of the Dirichlet integral $(\D,H^{1,2}(\R^n))$ on $L^2(\R^n,dx)$.\\
If instead, one fixes a positive Radon measure $\nu$ on $X$ having full topological support ${\rm supp}(\nu)=X$ and charging no $\E$-polar sets, then, setting $\check\F:=\F_e\cap L^2(X,\nu)$ one obtains a well defined, regular Dirichlet form $(\E,\check\F)$ on $L^2(X,\nu)$. This measure changing from $m$ to $\nu$ has a clear interpretation at the dynamical level: the process $Y^\nu$ on $X$ associated to the measure $\nu$ differs from the process $Y^m$ associated to the measure $m$ by a {\it random time change} only, in the sense that $Y^\nu_t=Y^m_{\tau_t}$ for a a time process $\tau$ on $\R_+$ depending on $\nu$ and $m$. This is also the reason why the reference measures $m$ and $\nu$ are called {\it speed measures}.\\
In the forthcoming section the change of speed measure will be considered for the Dirichlet integral $(\D,H^{1,2}(U)$ on Euclidean domains $U$ with respect to the Radon energy measures $\Gamma[a]:=|\nabla a|^2\cdot dx$ for $a\in H^{1,2}_{\rm loc}(U)$. This is allowed by the fact that $\D$-polar sets necessarily have vanishing Lebesgue measure so that, a fortiori, they have vanishing $\Gamma[a]$-measure too.
\vskip0.2truecm
The procedure to take {\it trace of a Dirichlet form} on the support ${\rm supp}(\nu)\subseteq X$ of a Radon measure charging no $\E$-polar sets is described in Section 4.4 for the case of Dirichlet integral.

\subsection{Multipliers of Dirichlet integrals}
A central object in this work  is played by the algebra of multipliers $\M(H^{1,2}(U))$ of the Dirichlet integral.
\begin{defn}(Dirichlet integral multipliers)\label{multiplier1}\par\noindent
A multiplier of the Dirichlet integral $(\D,H^{1,2}(U))$ on $L^2(U,dx)$, is a measurable function $a$ on $U$ such that $ab\in H^{1,2}(U)$ for all
$b\in H^{1,2}(U)$.
\end{defn}
\noindent
If $V\subset U$ is a subdomain with the extension property (see e.g. \cite{Ma}), then the restriction to $V$ of a multiplier of $H^{1,2}(U)$, provides a multiplier of $H^{1,2}(V)$.
\vskip0.1truecm\noindent
If $a$ is a multiplier and $b_n, b,b'\in H^{1,2}(U)$ are such that $\|b_n-b\|_{H^{1,2}(U)}\to 0$ and $\|ab_n-b'\|_{H^{1,2}(U)}\to 0$ as $n\to+\infty$, then $\|b_n-b\|_2\to 0$ and $\|ab_n-b'\|_2\to 0$ as $n\to+\infty$. Thus, possibly passing to a suitable subsequence, we have that $b_n\to b$ and $ab_n\to b'$ $dx$-a.e. on $U$ so that $b'=ab$. The function $a$, as an operator on the Hilbert space $H^{1,2}(U)$, is then closed and everywhere defined thus, by the Closed Graph Theorem, is a bounded operators on $H^{1,2}(U)$. \\
Multipliers form a unital algebra, denoted by $\M(H^{1,2}(U))$, which inherits a natural norm
\[
\|a\|_{\M(H^{1,2}(U))} :=\sup\{\|ab\|_{H^{1,2}(U)}:\|b\|_{H^{1,2}(U)}\le 1\}
\]
as a subalgebra of the Banach algebra of all bounded operators on the Hilbert space $H^{1,2}(U)$. A special role will be played by the subalgebra of {\it finite energy multipliers} defined as follows
\[
\F\M(H^{1,2}(U)):=\M(H^{1,2}(U))\cap H^{1,2}(U).
\]
\vskip0.2truecm\noindent
If $a$ is a multiplier and $b\in H^{1,2}(U)$ is a function that nowhere vanishes $dx$-a.e. on $U$, then
\[
\|a\|_\infty=\lim_{n\to+\infty}\|a^n b\|_2^{1/n}\le \lim_{n\to+\infty}\|a^n b\|_{H^{1,2}(U)}^{1/n}\le \lim_{n\to+\infty}\|a^n\|^{1/n}_{\mathbb{B}(H^{1,2}(U))}\cdot \|b\|_{H^{1,2}(U)}^{1/n}=\|a\|_{\M(H^{1,2}(U))}
\]
so that we have a contractive embedding $\M(H^{1,2}(U))\to L^\infty(U,dx)$ of the multiplier algebra into the algebra of essentially bounded functions.
\vskip0.2truecm\noindent
In the particular case where $U=\R^n$ for $n\ge 3$, the Dirichlet space $(\D,H^{1,2}(\R^n))$ on $L^2(\R^n,dx)$ is transient so that the energy seminorm $\sqrt{\D}$ is a complete norm on the extended Dirichlet space $H^{1,2}_e(\R^n)$. In this situations we may and we shall consider the multipliers algebra $\M(H^{1,2}_e(\R^n))$ of the extended Dirichlet space too.
\vskip0.2truecm\noindent
Since any point of $U$ has a neighborhood on which a suitable function in $H^{1,2}(U)$ takes the constant value $1$, it follows that
$\M(H^{1,2}(U))\subset L^\infty(U,dx)\cap H^{1,2}_{\rm loc}(U)$. On the other hand, the Leibnitz rule
\begin{equation}\label{leibnitz}
\nabla (ab)=(\nabla a)b+a\nabla b\qquad a\in H^{1,2}_{\rm loc}(U)\, ,\quad b\in H^{1,2}(U)\, ,
\end{equation}
interpreted as an identity in $L^1_{\rm loc}(U,\mathbb{R}^n)$, implies that a function $a\in L^\infty(U,dx)\cap H^{1,2}_{\rm loc}(U)$  is a multiplier if and only if the map
\[
T_a:H^{1,2}(U)\to L^2(U,\mathbb{R}^n)\qquad T_a(b):=(\nabla a)b
\]
is a bounded operator. The functional
\[
\eta :\M(H^{1,2}(U))\to [0,+\infty)\qquad \eta(a):=\|T_a\|_{H^{1,2}(U)\to L^2(U,\mathbb{R}^n)}
\]
is a seminorm on $\M(H^{1,2}(U))$ (vanishing on the constant functions only) which will be referred to as the {\it multipliers seminorm}. Since for $a\in L^\infty(U,dx)\cap H^{1,2}_{\rm loc}(U)$ we have
\[
\|b\|_{L^2(U,\Gamma[a])}=\|(\nabla a)b\|_{L^2(U,\mathbb{R}^n)} \qquad b\in H^{1,2}(U)\, ,
\]
the Leibnitz rule implies that $a$ is a multiplier if and only if $H^{1,2}(U)\subset L^2(U,\Gamma[a])$ and, in this case, the norm of the embedding
\[
i_a:H^{1,2}(U)\to L^2(U,\Gamma[a])\qquad i_a(b):=b
\]
coincides with the value of the seminorm $\eta(a)=\|i_a\|_{H^{1,2}(U)\to L^2(U,\Gamma[a])}$. An equivalent norm on $\M(H^{1,2}(U))$ (see [MS Theorem 2.3.1]) is given by $\|a\|_{\M(H^{1,2}(U))} \simeq\eta(a)+\|a\|_\infty$.\\
\vskip0.1truecm\noindent
Notice that, by the Leibnitz rule, a bounded Lipschitz function $a\in {\rm Lip}(U)\cap C_b(U)$ reduces, by restriction, to a multiplier of $H^{1,2}(V)$, for any bounded subdomain $V\subset U$.
\vskip0.1truecm\noindent
The domain $U$ has finite measure if and only if $1\in H^{1,2}(U)$. In this case any multiplier has finite energy
\[
\M(H^{1,2}(U))\subseteq H^{1,2}(U)
\]
and the multiplier seminorm is not weaker than (a multiple of) the energy seminorm
\[
\Bigl(\frac{1}{|U|}\D[a]\Bigr)^{\frac{1}{2}}=\Bigl(\frac{1}{|U|}\int_U|\nabla a|^2\, dx\Bigr)^{\frac{1}{2}}\le \eta(a)\qquad  a\in\M(H^{1,2}(U))\, .
\]
%
Since the energy measure  $\Gamma[a]:=|\nabla a|^2\cdot dx$ of any multiplier $a\in\M(H^{1,2}(U))$, is a positive Radon measure charging no $\D$-polar sets, in case $\Gamma[a]$ has full topological support we may consider the Dirichlet integral $(\D, H^{1,2}(U))$ on the space $L^2(U,\Gamma[a])$. Next proposition collects the first basic relations involved in this process.
\begin{prop}\label{DF}
Let $a\in \M(H^{1,2} (U))$ be a multiplier of the Dirichlet space $(\D,H^{1,2}(U))$ on $L^2(U,dx)$, let $\Gamma[a]$ be its energy measure and assume
${\rm supp}(\Gamma[a])=U$. Then, setting
\[
\F^a:=H^{1,2}_e(U)\cap L^2(U,\Gamma[a]),\qquad \G^a:={\rm BL}(U)\cap L^2(U,\Gamma[a])\, ,
\]
we have
\vskip0.2truecm\noindent
i) $H^{1,2}(U)\subseteq \F^a\subseteq \G^a\subset L^2(U,\Gamma[a])$;
\vskip0.2truecm\noindent
ii) $H^{1,2}(U)$ is dense in $L^2(U,\Gamma[a])$;
\vskip0.2truecm\noindent
iii) $(\D,\G^a)$ is a Dirichlet form on $L^2(U,\Gamma[a])$;
\vskip0.2truecm\noindent
iv) $(\D,\F^a)$ is a Dirichlet form on $L^2(U,\Gamma[a])$;
\vskip0.2truecm\noindent
v) $(D,H^{1,2}(U))$ is a closable Markovian form on $L^2(U,\Gamma[a])$;
\vskip0.2truecm\noindent
vi) $(\D,\F^a)$ is the form closure of $(\D,H^{1,2}(U))$ on $L^2(U,\Gamma[a])$.
 \end{prop}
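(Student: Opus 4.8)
The plan is to settle i) and ii) directly and to reduce iii)--vi) to a single closedness statement for $\D$-Cauchy sequences converging in $L^{2}(U,\Gamma[a])$. For i), it was recalled above that $H^{1,2}(U)\subseteq H^{1,2}_{e}(U)\subseteq {\rm BL}(U)$, while $H^{1,2}(U)\subseteq L^{2}(U,\Gamma[a])$ is precisely the reformulation, via the Leibnitz rule, of $a$ being a multiplier; intersecting these inclusions with $L^{2}(U,\Gamma[a])$ yields the whole chain. For ii), one has $C^{\infty}_{c}(U)\subseteq {\rm BL}(U)\cap L^{2}(U,dx)=H^{1,2}(U)$, and $C^{\infty}_{c}(U)$ is dense in $L^{2}(U,\mu)$ for every positive Radon measure $\mu$ on the open set $U$ (density of $C_{c}(U)$ together with mollification); this applies to $\mu=\Gamma[a]$.

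The engine of the remaining parts is the following lemma: \emph{if $v_{n}\in {\rm BL}(U)$ satisfy $\D[v_{n}-v_{m}]\to 0$ and $v_{n}\to v$ in $L^{2}(U,\Gamma[a])$, then there are $v_{\infty}\in {\rm BL}(U)$ and a constant $c$ with $\D[v_{n}-v_{\infty}]\to 0$, $v_{n}\to v_{\infty}+c$ in $L^{2}_{\rm loc}(U,dx)$, and $v=v_{\infty}+c$ $\Gamma[a]$-almost everywhere.} To prove it, completeness of the energy norm on ${\overset{\circ}{{\rm BL}}(U)}$ first produces $v_{\infty}\in{\rm BL}(U)$ with $\nabla v_{n}\to\nabla v_{\infty}$ in $L^{2}(U,\R^{n})$. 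On a ball $B$ with $\overline{B}\subset U$ the Poincar\'e inequality gives $v_{n}-v_{\infty}-m_{n}^{B}\to 0$ in $L^{2}(B,dx)$ with $m_{n}^{B}:=|B|^{-1}\int_{B}(v_{n}-v_{\infty})\,dx$. Since ${\rm supp}(\Gamma[a])=U$, one can choose $B$ together with $E\subseteq B$, $|E|>0$ and $\delta>0$ with $|\nabla a|^{2}\ge\delta$ on $E$; then $\int_{E}v_{n}^{2}\,dx\le\delta^{-1}\|v_{n}\|^{2}_{L^{2}(U,\Gamma[a])}$ stays bounded, hence $(m_{n}^{B})$ is bounded, and matching $dx$-a.e. limits on $E$ with the $\Gamma[a]$-a.e. limit of $v_{n}$ forces $m_{n}^{B}\to c_{B}$ and $v=v_{\infty}+c_{B}$ $\Gamma[a]$-a.e. on $B$. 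Two overlapping such balls yield the same constant since $\Gamma[a]$ charges every nonempty open set, so the connectedness of $U$ gives a single $c$.

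Granting the lemma, iii) is immediate: a Cauchy sequence in $\G^{a}$ for the graph norm $\sqrt{\D[\cdot]+\|\cdot\|^{2}_{L^{2}(\Gamma[a])}}$ is $\D$-Cauchy and $L^{2}(\Gamma[a])$-convergent, and the lemma returns its limit to $\G^{a}$ with $\D$-convergence; being densely defined by ii), and Markovian because the unit contraction $b\mapsto 0\vee b\wedge 1$ preserves ${\rm BL}(U)$ and $L^{2}(\Gamma[a])$ and increases neither $\D$ nor $\|\cdot\|_{L^{2}(\Gamma[a])}$, $(\D,\G^{a})$ is a Dirichlet form. As $(\D,\G^{a})$ is then a closed extension of $(\D,H^{1,2}(U))$ on $L^{2}(\Gamma[a])$, the latter is closable and Markovian (the same unit contraction stabilizes $H^{1,2}(U)$ too), which is v). Writing $(\D,\F_{0})$ for its form closure: if $b\in\F_{0}$, pick $b_{n}\in H^{1,2}(U)$ with $\D[b_{n}-b_{m}]\to 0$, $b_{n}\to b$ in $L^{2}(\Gamma[a])$; the lemma gives $b=b_{\infty}+c$ with $b_{\infty}+c\in {\rm BL}(U)$ and $b_{n}\to b_{\infty}+c$ in $L^{2}_{\rm loc}(U,dx)$, thus $dx$-a.e. along a subsequence, so $b_{\infty}+c\in H^{1,2}_{e}(U)$ by the definition of the extended Dirichlet space and $b\in\F^{a}$. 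Conversely $(\D,\F_{0})$ is a Dirichlet form (the closure of a Markovian closable form is Markovian), so $\F_{0}=(\F_{0})_{e}\cap L^{2}(\Gamma[a])$; since $H^{1,2}(U)\subseteq\F_{0}$ and $dx$-null sets are $\Gamma[a]$-null, every $b\in H^{1,2}_{e}(U)$, being the $dx$-a.e. limit of a $\D$-Cauchy sequence in $H^{1,2}(U)\subseteq\F_{0}$, belongs to $(\F_{0})_{e}$, whence $\F^{a}\subseteq\F_{0}$. Thus $\F_{0}=\F^{a}$, which is vi), and $(\D,\F^{a})$, being a form closure, is a closed, Markovian, densely defined form, i.e. iv).

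Apart from bookkeeping (the inclusions, Markovianity, the elementary density), the genuine obstacle is the lemma, and inside it the step that collapses the local averages $m_{n}^{B}$ to a single global constant: there the full support of $\Gamma[a]$ (securing a positive-measure set on which $|\nabla a|$ is bounded away from $0$) and the connectedness of $U$ are essential, and one must pass carefully between $dx$- and $\Gamma[a]$-equivalence classes — which is legitimate because $\Gamma[a]\ll dx$ and $\D$-polar sets are $dx$-null, hence $\Gamma[a]$-null. Alternatively, iii)--vi) follow from the general theory of change of speed measure for regular Dirichlet forms recalled above, once $\Gamma[a]=|\nabla a|^{2}\,dx$ is checked to be a positive Radon measure of full topological support charging no $\D$-polar sets.
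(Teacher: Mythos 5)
Your proposal is correct, and while parts i)--iii) and v) follow the same skeleton as the paper's proof, the load-bearing steps are organized differently. The paper also reduces iii) to the statement that a $\D$-Cauchy sequence in ${\rm BL}(U)$ admits a ${\rm BL}$-limit $b'$ and constants $c_n$ with $b_n+c_n\to b'$ in $L^2_{\rm loc}(U,dx)$, but it simply cites \cite[Section 2.2.4]{CF} for this and then pins down the constant exactly as you do, via the $\Gamma[a]$-a.e.\ limit; you instead prove this convergence lemma from scratch (completeness of the energy norm on ${\overset{\circ}{{\rm BL}}(U)}$, Poincar\'e on balls, boundedness of the averages $m_n^B$ extracted from a set where $|\nabla a|^2\ge\delta$, and a chaining argument over overlapping balls using connectedness and full support of $\Gamma[a]$), which makes the argument self-contained. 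The larger divergence is in iv) and vi): the paper proves iv) directly by a subsequence argument and then obtains vi) by invoking \cite[Theorem 5.2.8 i)]{CF} on special standard cores of time-changed Dirichlet spaces, whereas you identify the form closure $\F_0$ with $\F^a$ by two elementary inclusions ($\F_0\subseteq\F^a$ from your lemma, $\F^a\subseteq\F_0$ from $\F_0=(\F_0)_e\cap L^2(U,\Gamma[a])$ and absolute continuity $\Gamma[a]\ll dx$) and then read off iv) as a corollary. Your route buys two things: it avoids the regularity hypotheses implicit in the time-change machinery of \cite{CF}, and it sidesteps the step in the paper's direct proof of iv) where $\Gamma[a]$-a.e.\ convergence is upgraded to $dx$-a.e.\ convergence, since your lemma already supplies genuine $L^2_{\rm loc}(U,dx)$ convergence of the approximants. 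The cost is length; the paper's version is shorter by outsourcing both points to \cite{CF}.
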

\begin{proof}
i) By the definition of $\F^a$ and $\G^a$ and the general fact that $H^{1,2}_e(U)\subseteq {\rm BL}(U)$, it is enough to show that $H^{1,2}(U)\subset L^2(U,\Gamma[a])$. To this end, just notice that the Leibnitz rule \eqref{leibnitz}, applied to $a\in \M(H^{1,2} (U))$ and $b\in H^{1,2}(U)$, implies $(\nabla a)b=\nabla(ab)-a\nabla b\in L^2(U,\mathbb{R}^n)$ so that $b\in L^2(U,\Gamma[a])$ because
\[
\int_U |b|^2d\Gamma[a]=\|(\nabla a)b\|^2_{L^2(U,\mathbb{R}^n)}<+\infty\, .
\]
ii) The algebra ${\rm Lip}_c(U)$ of compactly supported Lipschitz functions is contained in $H^{1,2}(U)$ and separates the points of $U$, it is thus uniformly dense in the algebra $C_0(U)$ of continuous functions of compact support in $U$. Since multipliers are in $H^{1,2}_{\rm loc}(U)$, the energy measure $\Gamma[a]$ is a Radon measure so that ${\rm Lip}_c(U)$ is, a fortiori, dense in $L^2(U,\Gamma{a})$.
\vskip0.2truecm\noindent
iii) By item i) we have $H^{1,2}(U)\subseteq \G^a$ so that the quadratic form $(\D,\G^a)$ is densely defined, by item ii). To prove that it is closed, we proceed as follow.
Let $b_n\in \G^a$ be a sequence converging to $b\in L^2(U,\Gamma[a])$ in the norm of $L^2(U,\Gamma[a])$ and such that $\D[b_n-b_m]\to 0$ as $n,m\to +\infty$. We have to show that $b\in \G^a$ (which reduces to show that $b\in {\rm BL}(U)$) and that $\D[b_n-b]\to 0$ as $n\to+\infty$. By the properties of the space of Beppo Levi functions (see e.g. \cite[Section 2.2.4]{CF}), there exists $b' \in {\rm BL}(U)$ and constants $c_n\in\mathbb{C}$ such that $\D[b_n-b^\prime]\to 0$ and $b_n+c_n\to b^\prime$ in $L^2_{\rm loc}(U,dx)$. Then there exists a subsequence such that $b_{n_k}+c_{n_k}\to b^\prime$ as $k\to +\infty$, $dx$-a.e. in $U$. Possibly considering a further subsequence, we have also that $b_{n_k}\to b$ as $k\to +\infty$, $\Gamma[a]$-a.e. in $U$. Then $c_{n_k}=(b_{n_k}+c_{n_k})-b_{n_k}\to b^\prime -b$ as $k\to +\infty$, $\Gamma[a]$-a.e. in $U$ and there exists $c:=\lim_{k\to +\infty} c_{n_k}\in\mathbb{C}$ such that $b=b^\prime -c$, $\Gamma[a]$-a.e. in $U$. Hence $b\in {\rm  BL}(U)$ and consequently $b\in {\rm  BL}(U)\cap L^2(U,\Gamma[a])=\G^a$. Moreover, as $\D[c]=0$, we have $\D[b_n-b]=\D[b_n -(b+c)]=\D[b_n -b^\prime]\to 0$ as $n\to +\infty$ which proves that $(\D,\G^a)$ is closed in $L^2(U,\Gamma[a])$. As the Markovianity property of the Dirichlet integral, $\D[b\wedge 1]\le \D[b]$, is exactly the same when considered on $L^2(U,dx)$ and on $L^2(U,\Gamma[a])$, we have that $(\D,\G^a)$ is a Dirichlet form.
\par\noindent
iv) Let $b_n\in\F^a$ be a sequence such that $\D[b_n-b_m]\to 0$ as $n,m\to +\infty$ and for which there exists $b\in L^2(U,\Gamma[a])$ such that $\|b_n-b\|_{L^2(U,\Gamma[a])}\to 0$ as $n\to +\infty$. Passing to a suitable subsequence if needed, we have that $b_n(x)\to b(x)$ $\Gamma[a]$-a.e. $x\in U$. By the assumption ${\rm supp}(\Gamma[a])=U$, we have that $b_n(x)\to b(x)$ $dx$-a.e. $x\in U$ so that $b\in H^{1,2}_e(U)$ and, a fortiori, $b\in\F^a$.
\par\noindent
v) $(D,H^{1,2}(U))$ is a closable form on $L^2(U,\Gamma[a])$ as restriction of the closed form $(\D,\F^a)$.
\par\noindent
vi) By \cite[Theorem 5.2.8 i)]{CF}, any special standard core of $(\D,H^{1,2}(U))$ on $L^2(U,dx)$  (as ${\rm Lip}_c(U)$, for example) is dense in $\F^a$ with respect to the graph norm of the Dirichlet form $(\D,\F^a)$  on $L^2(U,\Gamma[a])$.

\end{proof}
\begin{rem}
i) The space $\F^a$ is a {\it time changed Dirichlet space} according to \cite[Chapter 5]{CF}. When $U$ has continuous boundary in the sense of Maz'ya, the Dirichlet space $(\D, H^{1,2}(U))$ on $L^2(U\cup\partial U,dx)$ is regular and this implies that the time changed Dirichlet space $(\D,\F^a)$ on $L^2(U\cup\partial U,\Gamma[a])=L^2(U,\Gamma[a])$ is regular too. Its associated Hunt processes is the Brownian motion reflected at $\partial U$ with time change given by the {\it additive functional} associated to $\Gamma[a]$.
\vskip0.1truecm\noindent
ii) It happens that $1\in\F^a$ if and only if the Dirichlet space $(\D,H^{1,2}(U))$ is recurrent on $L^2(U,dx)$ and the multiplier has finite energy $a\in\M(H^{1,2}(U))\cap H^{1,2}(U)$. If $(\D,H^{1,2}(U))$ is recurrent on $L^2(U,dx)$ then $H^{1,2}_e(U)={\rm BL}(U)$ and then $\F^a=\mathcal{G}^a$. This is the case when $U\subset\R^n$ has finite volume for any $n\ge 1$ but also when $U=\R^n$ for $n=1,2$.
\vskip0.1truecm\noindent
iii) If the multiplier has finite energy $a\in\M(H^{1,2}(U))\cap H^{1,2}(U)$ and the Dirichlet space $(\D,H^{1,2}(U))$ is transient on $L^2(U,dx)$, then $1\in\G^a$ while $1\notin\F^a$.
\vskip0.1truecm\noindent
iv) The relationship between the Dirichlet spaces $\F^a$ and $\G^a$ may be discussed within the theory of {\it reflected Dirichlet spaces} (see [CF Ch. 6]).
\end{rem}
\subsection{Fundamental tone of a multiplier}
Here we define the spectral characteristic of multipliers whose properties will be relevant in next sections. For any multiplier $a\in\M(H^{1,2}(U))$ such that ${\rm supp}(\Gamma[a])=U$, we will denote by ${\rm Sp}(\D,\F^a)\subseteq [0,+\infty)$ the spectrum of the Dirichlet space $(\D,\F^a)$ on $L^2(U,\Gamma[a])$, i.e. the spectrum of the self-adjoint, nonnegative operator on $L^2(U,\Gamma[a])$ whose associated quadratic form is $(\D,\F^a)$.
\begin{defn}
The {\it fundamental tone} of a multiplier $a\in\M(H^{1,2}(U))$ with ${\rm supp}(\Gamma[a])=U$, is defined as
\begin{equation}\label{tone}
\mu_1(U,a):=\inf \Bigl\{{\rm Sp}(\D,\F^a)\setminus\{0\}\Bigr\}
\end{equation}
and also denoted by $\mu_1(a)$ if no confusion can arise. Since the spectrum is closed, the fundamental tone belongs to it.
\end{defn}
The following results are aimed to clarify the connections between ergodic properties of the Dirichlet integral $(\D,H^{1,2}(U))$, some spectral property of the Dirichlet form $(\D,\F^a)$ on $L^2(U,\Gamma[a])$ and the multiplier seminorm. As the domain $U$ is, by definition, connected, the Dirichlet space $(\D,H^{1,2}(U))$ on $L^2(U,dx)$ is irreducible so that it is recurrent or transient.
\begin{prop}\label{property1}
Let $a\in\M(H^{1,2}(U))$ be a multiplier with full support ${\rm supp}(\Gamma[a])=U$. Then the following conditions are equivalent:
\vskip0.1truecm\noindent
i) $(\D,H^{1,2}(U))$ is recurrent and the multiplier has finite energy
$a\in\M(H^{1,2}(U))\cap H^{1,2}(U)$;
\vskip0.1truecm\noindent
ii) the zero value is a non degenerate eigenvalue in ${\rm Sp}(\D,\F^a)$.
\end{prop}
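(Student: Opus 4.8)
The plan is to establish the equivalence by chasing the meaning of "$0$ is a non-degenerate eigenvalue of $(\D,\F^a)$ on $L^2(U,\Gamma[a])$", i.e. that the constants lie in $\F^a$ (so that $0$ is an eigenvalue) and that they exhaust the kernel of $\D$ on $\F^a$ (non-degeneracy). Since $\D$ is irreducible on $L^2(U,dx)$ — $U$ being connected — the only functions in $\mathrm{BL}(U)$ with $\D[b]=0$ are the constants, so the kernel of $\D$ on $\G^a$, hence a fortiori on $\F^a\subseteq\G^a$, can contain nothing but constants. Thus the non-degeneracy half of (ii) is automatic once $0$ is an eigenvalue at all, and the whole content of (ii) reduces to the assertion $1\in\F^a$. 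That brings us squarely to item ii) of the Remark following Proposition~\ref{DF}, which already records the equivalence "$1\in\F^a$ iff $(\D,H^{1,2}(U))$ is recurrent and $a\in\M(H^{1,2}(U))\cap H^{1,2}(U)$", so the proof is essentially a matter of unwinding definitions and citing that remark.

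\noindent First I would prove the implication (i)$\Rightarrow$(ii). Assume recurrence of $(\D,H^{1,2}(U))$ and $a\in H^{1,2}(U)$. Recurrence gives $1\in H^{1,2}_e(U)$ with $\D[1]=0$; and since $a\in H^{1,2}(U)$, by Proposition~\ref{DF}(i) applied with the constant multiplier considerations, or more directly because $\Gamma[a]$ is a finite measure when $a$ has finite energy (indeed $\Gamma[a](U)=\int_U|\nabla a|^2\,dx=\D[a]<+\infty$), the constant function $1$ lies in $L^2(U,\Gamma[a])$. Hence $1\in H^{1,2}_e(U)\cap L^2(U,\Gamma[a])=\F^a$ and $\D[1]=0$, so $0\in{\rm Sp}(\D,\F^a)$ is an eigenvalue with eigenfunction $1$. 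Non-degeneracy then follows from irreducibility: if $b\in\F^a\subseteq\mathrm{BL}(U)$ and $\D[b]=0$, then $\nabla b=0$ a.e. on the connected set $U$, so $b$ is constant; thus the eigenspace of $0$ is exactly $\mathbb{C}\cdot 1$, one-dimensional, i.e. $0$ is a non-degenerate eigenvalue.

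\noindent Conversely, for (ii)$\Rightarrow$(i), suppose $0$ is an eigenvalue of $(\D,\F^a)$, witnessed by some $0\neq b\in\F^a$ with $\D[b]=0$. By the same irreducibility argument $b$ is a non-zero constant, so after scaling $1\in\F^a=H^{1,2}_e(U)\cap L^2(U,\Gamma[a])$. In particular $1\in H^{1,2}_e(U)$ with $\D[1]=0$, which is precisely the definition of recurrence of $(\D,H^{1,2}(U))$ on $L^2(U,dx)$. It remains to deduce that the multiplier has finite energy. Since $(\D,H^{1,2}(U))$ is now known to be recurrent, $H^{1,2}_e(U)=\mathrm{BL}(U)$ and, as noted in the excerpt, recurrence of the Dirichlet integral is equivalent to $|U|<+\infty$ together with $1\in H^{1,2}(U)$ (for Euclidean domains in dimensions where the relevant dichotomy applies; in any case recurrence forces $1\in H^{1,2}(U)$, i.e. $|U|<+\infty$). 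But when $|U|<+\infty$ we showed in the excerpt that $\M(H^{1,2}(U))\subseteq H^{1,2}(U)$ automatically, so $a\in\M(H^{1,2}(U))\cap H^{1,2}(U)$, which is (i).

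\noindent The only step requiring genuine care is the passage, inside (ii)$\Rightarrow$(i), from "$1\in H^{1,2}_e(U)$ with $\D[1]=0$'' to the finite-energy property of $a$; this is where one leans on the structural fact (recalled in Section~2.3 of the excerpt) that recurrence of $(\D,H^{1,2}(U))$ forces $1\in H^{1,2}(U)$ and hence $|U|<+\infty$, after which the inclusion $\M(H^{1,2}(U))\subseteq H^{1,2}(U)$ does the rest. Everything else — the fact that $0$ being an eigenvalue is the same as $1\in\F^a$, and that non-degeneracy is automatic — is a direct consequence of irreducibility of $\D$ on the connected domain $U$, so I do not expect any serious obstacle; the proof is short and can simply invoke item~ii) of the Remark after Proposition~\ref{DF} for the equivalence $1\in\F^a\Leftrightarrow$ (i).
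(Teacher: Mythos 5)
Your reduction of condition (ii) to ``$1\in\F^a$'' and your proof of (i)$\Rightarrow$(ii) are fine and match the paper's argument: recurrence gives $1\in H^{1,2}_e(U)$ with $\D[1]=0$, finiteness of $\D[a]=\Gamma[a](U)$ gives $1\in L^2(U,\Gamma[a])$, and irreducibility (connectedness of $U$) gives non-degeneracy. The problem is in the converse. Having obtained recurrence from the constant eigenfunction, you deduce finite energy of $a$ via the chain ``recurrence forces $1\in H^{1,2}(U)$, i.e.\ $|U|<+\infty$, hence $\M(H^{1,2}(U))\subseteq H^{1,2}(U)$.'' The first link is false: recurrence only gives $1\in H^{1,2}_e(U)$, and $1\in H^{1,2}(U)=H^{1,2}_e(U)\cap L^2(U,dx)$ requires in addition $|U|<+\infty$. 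The paper itself records that $(\D,H^{1,2}(\R^n))$ is recurrent for $n=1,2$, and $\R^2$ has infinite volume; finite volume implies recurrence, not conversely. So your argument breaks down precisely on the step you flagged as the delicate one, and for recurrent domains of infinite volume it proves nothing about $\D[a]$.

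The repair is immediate and is what the paper does: the zero eigenfunction $b$ is a \emph{nonzero constant} lying in $L^2(U,\Gamma[a])$, hence
\[
|b|^2\cdot\Gamma[a](U)=\|b\|^2_{L^2(U,\Gamma[a])}<+\infty\, ,
\]
so $\Gamma[a](U)=\int_U|\nabla a|^2\,dx=\D[a]<+\infty$ and $a\in\M(H^{1,2}(U))\cap H^{1,2}(U)$ directly, with no detour through the volume of $U$. Everything else in your write-up is correct and essentially identical to the paper's proof.
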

\begin{proof}
i) If the Dirichlet space $(\D,H^{1,2}(U))$ on $L^2(U,dx)$ is recurrent then $1\in H^{1,2}_e(U)$ and $\D[1]=0$. Since the multiplier has finite energy, its energy measure $\Gamma[a]$ is finite so that $1\in\F^a$. Hence the constant function $1$ is eigenfunction corresponding to the zero eigenvalue. The non degeneracy follows from the irreducibility of  $(\D,H^{1,2}(U))$ on $L^2(U,dx)$ which in turn is a consequence of the assumption that $U$ is connected (as a domain).\\
ii) On the other hand, let $b\in \F^a= H^{1,2}_e(U)\cap L^2(U,\Gamma[a])$ be an eigenvector corresponding to the zero eigenvalue. Then $\D[b]=0$ so that $b$, by irreducibility, is a constant function belonging to $H^{1,2}_e(U)$ which implies recurrence. Consequently, since $b$ is constant and belongs also to $L^2(U,\Gamma[a])$, it follows that $\Gamma[a]$ is finite so that $a$ has finite energy.
\end{proof}
\begin{prop}\label{property2}
Let $a\in\M(H^{1,2}(U))$ be a multiplier with full support ${\rm supp}(\Gamma[a])=U$.
\vskip0.1truecm\noindent
i) If the fundamental tone is strictly positive $\mu_1(a)>0$, then
\vskip0.1truecm\noindent
i$_1$) $0\in {\rm Sp}(\D,\F^a)$ implies  that Dirichlet space $(\D,H^{1,2}(U))$ is recurrent
\vskip0.1truecm\noindent
i$_2$) $0\notin {\rm Sp}(\D,\F^a)$ implies  that Dirichlet space $(\D,H^{1,2}(U))$ is transient.
\vskip0.1truecm\noindent
ii) If $\mu_1(a)=0$, then $0\in {\rm Sp}_{\rm ess}(\D,\F^a)$ and it is not an eigenvalue.
\end{prop}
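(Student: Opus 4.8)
The plan is to read off both assertions from the structure of the spectrum of the nonnegative self-adjoint operator $L^a$ associated with $(\D,\F^a)$ on $L^2(U,\Gamma[a])$, using only the recurrent/transient dichotomy for the irreducible form $(\D,H^{1,2}(U))$ together with Proposition~\ref{property1}. The elementary remark underlying everything, already contained in the proof of Proposition~\ref{property1}, is that if $b\in\F^a\subseteq H^{1,2}_e(U)$ satisfies $\D[b]=0$ then $\nabla b=0$ $dx$-a.e.\ and, $U$ being connected, $b$ is a constant; hence $\ker L^a\neq\{0\}$ if and only if $1\in\F^a$, i.e.\ if and only if $(\D,H^{1,2}(U))$ is recurrent and $a$ has finite energy, and in that case the zero eigenvalue is non-degenerate.

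\textbf{Proof of i).} Assume $\mu_1(a)>0$, so that $(0,\mu_1(a))$ contains no point of ${\rm Sp}(\D,\F^a)$. For i$_1$), if $0\in{\rm Sp}(\D,\F^a)$ then $0$ is an isolated point of the spectrum, hence an eigenvalue of $L^a$; by the remark above its eigenvectors are nonzero constants, so $1\in H^{1,2}_e(U)$ with $\D[1]=0$, that is $(\D,H^{1,2}(U))$ is recurrent. For i$_2$) I would prove the stronger fact that \emph{recurrence of $(\D,H^{1,2}(U))$ always forces $0\in{\rm Sp}(\D,\F^a)$}; its contrapositive, combined with the dichotomy, then gives transience. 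Indeed, if $(\D,H^{1,2}(U))$ is recurrent, choose (using $1\in H^{1,2}_e(U)$ and Markovianity) a $\D$-Cauchy sequence $b_n\in H^{1,2}(U)$ with $|b_n|\le 1$, $b_n\to 1$ $dx$-a.e.\ and $\D[b_n]\to 0$; each $b_n$ lies in $\F^a$ since $H^{1,2}(U)\subseteq\F^a$, and $b_n\to 1$ also $\Gamma[a]$-a.e.\ because $\Gamma[a]\ll dx$. If $a$ has finite energy then $1\in\F^a$ and $0$ is an eigenvalue by Proposition~\ref{property1}; if $\Gamma[a](U)=+\infty$, then Fatou's lemma gives $\|b_n\|^2_{L^2(U,\Gamma[a])}\to+\infty$ while $\D[b_n]\to 0$, so the Rayleigh quotients $\D[b_n]/\|b_n\|^2_{L^2(U,\Gamma[a])}\to 0$ and $\inf{\rm Sp}(\D,\F^a)=0$. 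In either case $0\in{\rm Sp}(\D,\F^a)$.

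\textbf{Proof of ii).} Assume $\mu_1(a)=0$, i.e.\ $\inf\bigl({\rm Sp}(\D,\F^a)\setminus\{0\}\bigr)=0$. Then there is a strictly decreasing sequence $\lambda_k\downarrow 0$ of nonzero points of ${\rm Sp}(\D,\F^a)$; since the spectrum is closed, $0$ lies in it and is not isolated, hence $0\in{\rm Sp}_{\rm ess}(\D,\F^a)$ (a point of the discrete spectrum would be isolated in the spectrum). To see that $0$ is not an eigenvalue I argue by contradiction: were it one, the remark above would give that $(\D,H^{1,2}(U))$ is recurrent, that $a$ has finite energy, that $\Gamma[a]$ is finite, and that $\ker L^a=\mathbb{C}\cdot 1$. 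On the invariant subspace $\{b\in L^2(U,\Gamma[a]):\int_U b\,d\Gamma[a]=0\}$ the restriction of $L^a$ still has $\inf{\rm Sp}\le\mu_1(a)=0$, so there are $u_k\in\F^a$ with $\int_U u_k\,d\Gamma[a]=0$, $\|u_k\|_{L^2(U,\Gamma[a])}=1$ and $\D[u_k]\to 0$. Since $\D[u_k]\to 0$, the theory of Beppo Levi functions provides constants $c_k$ with $u_k-c_k\to 0$ in $L^2_{\rm loc}(U,dx)$; one then wants to upgrade this, using finiteness of $\Gamma[a]$ and the continuous embedding $H^{1,2}(U)\hookrightarrow L^2(U,\Gamma[a])$ supplied by the multiplier property, to $u_k-c_k\to 0$ in $L^2(U,\Gamma[a])$. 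Combined with $\int_U u_k\,d\Gamma[a]=0$ this would force $c_k\to 0$ and hence $\|u_k\|_{L^2(U,\Gamma[a])}\to 0$, contradicting $\|u_k\|_{L^2(U,\Gamma[a])}=1$; so $0$ is not an eigenvalue.

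\textbf{Where the difficulty lies.} Assertions i$_1$), i$_2$) and $0\in{\rm Sp}_{\rm ess}(\D,\F^a)$ in ii) are soft consequences of the spectral theorem and Proposition~\ref{property1}. The point I expect to demand genuine work — or possibly an unstated regularity hypothesis on $U$ — is the last step of ii): passing from $\D[u_k]\to 0$, $u_k-c_k\to 0$ in $L^2_{\rm loc}(U,dx)$ and $\Gamma[a](U)<+\infty$ to the \emph{strong} convergence $u_k-c_k\to 0$ in $L^2(U,\Gamma[a])$. This amounts to a tightness / uniform--integrability statement for $\{|u_k|^2\}$ against the finite energy measure $\Gamma[a]$, and it is precisely here that the full strength of $a\in\M(H^{1,2}(U))$ and of recurrence must be invested.
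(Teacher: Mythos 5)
Part i) of your proposal is correct. Your i$_1$) is the paper's argument verbatim. For i$_2$) you take a genuinely different route: the paper deduces from $0\notin{\rm Sp}(\D,\F^a)$ the Poincar\'e inequality $\mu_1(a)\,\|b\|^2_{L^2(U,\Gamma[a])}\le\D[b]$ on $\F^a$, extends it to all of $H^{1,2}_e(U)$ by an approximation argument, and reads off transience from the implication $\D[b]=0\Rightarrow b=0$; you instead prove that recurrence always puts $0$ in ${\rm Sp}(\D,\F^a)$ (via Proposition~\ref{property1} when $\Gamma[a](U)<\infty$, and via Fatou applied to an approximating sequence for $1$, which sends the Rayleigh quotients to $0$, when $\Gamma[a](U)=\infty$) and conclude by the recurrence/transience dichotomy for irreducible forms. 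Both are sound; yours even shows that $0\notin{\rm Sp}(\D,\F^a)$ forces transience with no positivity assumption on $\mu_1(a)$. The first clause of ii), $0\in{\rm Sp}_{\rm ess}(\D,\F^a)$, you also prove exactly as the paper does.

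The gap is the one you flagged yourself: the ``not an eigenvalue'' clause of ii). You never establish the tightness step that would upgrade $u_k-c_k\to0$ in $L^2_{\rm loc}(U,dx)$ to convergence in $L^2(U,\Gamma[a])$, and you should know that the paper offers no help here: its entire proof of this clause is the remark that a non-isolated spectral value ``is not a (discrete or infinitely degenerate) eigenvalue'', which silently excludes the remaining case of a finite-multiplicity eigenvalue embedded in the essential spectrum. Your suspicion that an unstated hypothesis on $U$ is needed is correct, and the missing step cannot be supplied in the stated generality. Take $U\subset\R^n$ bounded, connected, of ``rooms and corridors'' type, so that the Neumann spectrum accumulates at $0$ and $\mu_1(U)=0$, and take $a(x):=x_1$. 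Then $a\in L^\infty(U)\cap H^{1,2}_{\rm loc}(U)$ with $T_a$ bounded, hence $a\in\M(H^{1,2}(U))$, and $\Gamma[a]=dx$ has full support; moreover $\F^a=H^{1,2}_e(U)\cap L^2(U,dx)=H^{1,2}(U)$, so $(\D,\F^a)$ is the Neumann form, $0$ is a non-degenerate eigenvalue spanned by the constants, and yet $\mu_1(a)=\mu_1(U)=0$. So no amount of exploiting the multiplier property will close your compactness argument; what both your proof and the paper's actually yield is only that $0$ is neither an isolated nor an infinitely degenerate eigenvalue.
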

\begin{proof}
i$_1$) The condition $\mu_1(a)>0$ and the assumption $0\in {\rm Sp}(\D,\F^a))$ imply that the zero value is isolated in ${\rm Sp}(\D,\F^a)$ so that it is an eigenvalue. If $b\in\F^a$ is an associated eigenfunction then $\D[b]=0$ so that $b$ is constant, by irreducibility. Hence $1\in H^{1,2}_e(U)$ and $D[1]=0$ which implies recurrence.\\
i$_2$) The condition $\mu_1(a)>0$ and the assumption $0\notin {\rm Sp}(\D,\F^a)$ imply $\mu_1(a)=\inf {\rm Sp}(\D,\F^a)$ and the following Poincar\'e inequality
\[
\mu_1(a)\cdot \|b\|^2_{L^2(U,\Gamma[a])}\le\D[b]\qquad b\in \F^a\, .
\]
Let $b\in\H^{1,2}_e(U)$ and $b_n\in H^{1,2}(U)$ a sequence such that $\D[b_n-b_m]\to 0$ as $n,m\to +\infty$, $\D[b_n-b]\to 0$ as $n\to +\infty$ and $b_n\to b$, $dx$-a.e. on $U$. By the Poincar\'e inequality we have $\|b_n-b_m\|_{L^2(U,\Gamma[a])}\to 0$ as $n\to +\infty$ and there exist $b':=\lim_{n\to+\infty}b_n\in L^2(U,\Gamma[a])$. Possibly passing to a suitable subsequence, we have $b_n\to b'$, $\Gamma[a]$-a.e. on $U$ as $n\to+\infty$. Since the support of $\Gamma[a]$ is $U$, we have the identification $b'=b$, $\Gamma[a]$-a.e. on $U$. Passing to the limit $n\to +\infty$ in the Poincar\'e inequalities $\mu_1(a)\cdot \|b_n\|^2_{L^2(U,\Gamma[a])}\le\D[b_n]$, we get that the Poincar\'e inequality is true even for any $b\in\H^{1,2}_e(U)$. Consequently, if for such a function $b$ one has $D[b]=0$, it follows that $b=0$, $\Gamma[a]$-a.e. on $U$ or equivalently $dx$-a.e. on $U$ and this implies transience.\\
ii) By definition of the fundamental tone, if $\mu_1(a)=0$, this value cannot be isolated in ${\rm Sp}(\D,\F^a)$ and then it is not a (discrete or infinitely degenerate) eigenvalue.
\end{proof}

\noindent
When $0\in {\rm Sp}(\D,\F^a)$ is a (non degenerate) eigenvalue, by Proposition \ref{property1}, the multipliers have finite energy. If moreover the fundamental tone of a multiplier is strictly positive $\mu_1(a)>0$, a Poincar\'e-Wirtinger (or spectral gap) inequality holds true
\begin{equation}\label{sp4}
 \mu_1(a)\cdot\|b-p_a(b)\|^2_{L^2(U,\Gamma[a])}\le\D[b]\qquad b\in \F^a
\end{equation}
with $p_a(b)=\int_U b\cdot |\nabla a|^2 dx/ \D [a]$.
When $U$ is a finite volume domain, $H^{1,2}(U)$ contains the constant functions and zero is then a non degenerate eigenvalue of the Dirichlet integral $(\D,H^{1,2}(U))$ on $L^2(U,dx)$. In case {\it zero is an isolated eigenvalue}, a Poincar\'e-Wirtinger (or spectral gap) inequality holds true
\begin{equation}\label{sp3}
 \mu_1(U)\cdot\|b-{\bar b}\|^2_{L^2(U,dx)}\le\D[b]\qquad b\in H^{1,2}(U)
\end{equation}
with ${\bar b}=\int_U b\cdot dx/ |U|$ and the best constant $\mu_1(U)>0$ is called the {\it fundamental tone} of $U$ (the value $\mu_1(U)$ is the first non-zero eigenvalue of the Laplacian. It coincides with the fundamental tone $\mu_1(a)$ of any multiplier $a$ solving the ikonal equation $|\nabla a|=1$ on $U$.
\vskip0.2truecm\noindent
Next result shows that for finite volume domains $U$, the non vanishing of the fundamental tone $\mu_1(U)$ implies the non vanishing of the fundamental tone $\mu_1(a)$ of any multiplier. Recall that if the volume of $U$ is finite, all multipliers have finite energy $\M(H^{1,2}(U))\subseteq H^{1,2}(U)$.
\begin{prop}(Persistence of the spectral gap)\\
Let $U\subset\mathbb{R}^n$ be a finite volume domain with a strictly positive fundamental tone $\mu_1(U)>0$. \\
Then, for any multiplier $a\in\M(H^{1,2}(U))$ such that ${\rm supp}(\Gamma[a])=U$, we have $0\in {\rm Sp}(\D,\F^a)$ and this is a non degenerate eigenvalue. Moreover, the fundamental tone is strictly positive $\mu_1(a)>0$ and, particular,
\[
\frac{1}{{\eta}(a)^2}\cdot\frac{\mu_1(U)}{1+\mu_1(U)}\le\mu_1(a)\qquad a\in\M(H^{1,2}(U))\, .
\]
\end{prop}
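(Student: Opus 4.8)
\section*{Proof proposal}

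The plan is to handle the two assertions separately: the non-degeneracy of the zero eigenvalue of $(\D,\F^a)$, and the explicit lower bound on $\mu_1(a)$, the latter giving $\mu_1(a)>0$ as a byproduct.

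First I would dispose of the eigenvalue statement. Since $|U|<\infty$, the constant function $1$ lies in $H^{1,2}(U)\subseteq H^{1,2}_e(U)$ with $\D[1]=0$, so $(\D,H^{1,2}(U))$ on $L^2(U,dx)$ is recurrent and, as recalled above, every multiplier has finite energy, $\M(H^{1,2}(U))\subseteq H^{1,2}(U)$. By Proposition~\ref{property1} these two properties are precisely equivalent to $0$ being a non-degenerate eigenvalue of ${\rm Sp}(\D,\F^a)$, with eigenspace the constants. I would also note here that ${\rm supp}(\Gamma[a])=U$ forces $\D[a]=\Gamma[a](U)>0$, whence $\eta(a)\ge\bigl(|U|^{-1}\D[a]\bigr)^{1/2}>0$, so the claimed inequality is not vacuous. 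With $0$ a simple eigenvalue, $\mu_1(a)$ is the best constant in the Poincar\'e--Wirtinger inequality $\mu_1(a)\,\|b-p_a(b)\|^2_{L^2(U,\Gamma[a])}\le\D[b]$ over $b\in\F^a$, where $p_a$ is the $L^2(U,\Gamma[a])$-orthogonal projection onto the constants, $p_a(b)=\D[a]^{-1}\int_U b\,|\nabla a|^2\,dx$; equivalently $\mu_1(a)=\inf\bigl\{\D[b]/\|b-p_a(b)\|^2_{L^2(U,\Gamma[a])}:b\in\F^a,\ b\ \text{non-constant}\bigr\}$, since $\D[b-p_a(b)]=\D[b]$.

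Next I would prove the corresponding inequality with the explicit constant, first for $b\in H^{1,2}(U)$. As $p_a(b)$ minimizes the $L^2(U,\Gamma[a])$-distance of $b$ to the constants, taking the Lebesgue mean $\bar b=|U|^{-1}\int_U b\,dx$ gives $\|b-p_a(b)\|_{L^2(U,\Gamma[a])}\le\|b-\bar b\|_{L^2(U,\Gamma[a])}$. Now $\|b-\bar b\|^2_{L^2(U,\Gamma[a])}=\|(\nabla a)(b-\bar b)\|^2_{L^2(U,\R^n)}=\|T_a(b-\bar b)\|^2_{L^2(U,\R^n)}\le\eta(a)^2\,\|b-\bar b\|^2_{H^{1,2}(U)}$ by the definition of the multiplier seminorm. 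Finally $\|b-\bar b\|^2_{H^{1,2}(U)}=\D[b-\bar b]+\|b-\bar b\|^2_{L^2(U,dx)}=\D[b]+\|b-\bar b\|^2_{L^2(U,dx)}$, and the spectral gap inequality \eqref{sp3} for $U$ yields $\|b-\bar b\|^2_{L^2(U,dx)}\le\mu_1(U)^{-1}\D[b]$. Chaining these estimates gives
\[
\|b-p_a(b)\|^2_{L^2(U,\Gamma[a])}\le\eta(a)^2\,\frac{1+\mu_1(U)}{\mu_1(U)}\,\D[b],\qquad b\in H^{1,2}(U).
\]

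To upgrade this from the form core $H^{1,2}(U)$ to all of $\F^a$ I would invoke Proposition~\ref{DF}(vi): $(\D,\F^a)$ is the form closure of $(\D,H^{1,2}(U))$ on $L^2(U,\Gamma[a])$, so every $b\in\F^a$ is approximated by $b_n\in H^{1,2}(U)$ with $\D[b_n-b]\to0$ and $b_n\to b$ in $L^2(U,\Gamma[a])$; since $p_a$ is $L^2(U,\Gamma[a])$-continuous the displayed inequality survives in the limit. Comparing with the variational characterization of $\mu_1(a)$ then gives $\eta(a)^{-2}\,\mu_1(U)/(1+\mu_1(U))\le\mu_1(a)$, in particular $\mu_1(a)>0$, which is the assertion. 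I do not anticipate a genuine obstacle: the estimate chain is routine once the multiplier seminorm and \eqref{sp3} are in hand; the only points deserving care are that ${\rm supp}(\Gamma[a])=U$ makes $\eta(a)$ and $\D[a]$ strictly positive, so the statement is meaningful, and that the core estimate really transfers to $\F^a$, which is exactly what Proposition~\ref{DF}(vi) together with the continuity of $p_a$ deliver.
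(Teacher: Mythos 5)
Your proposal is correct and follows essentially the same route as the paper: the identical chain $\|b-p_a(b)\|^2_{L^2(U,\Gamma[a])}\le\|b-\bar b\|^2_{L^2(U,\Gamma[a])}\le\eta(a)^2\|b-\bar b\|^2_{H^{1,2}(U)}\le\eta(a)^2(1+\mu_1(U)^{-1})\D[b]$ on $H^{1,2}(U)$, followed by passage to $\F^a$ via the form-core property of Proposition~\ref{DF}~vi). Your explicit treatment of the non-degenerate zero eigenvalue via Proposition~\ref{property1} and the remark that ${\rm supp}(\Gamma[a])=U$ forces $\eta(a)>0$ are details the paper leaves implicit, but they add nothing structurally different.
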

\begin{proof}
Since $p_a$ is the orthogonal projection operator onto the subspace of constant functions in  $L^2(U,\Gamma[a])$, for $b\in H^{1,2}(U)$ we have
\[
\|b-p_a(b)\|^2_{L^2(U,\Gamma[a])} \le\|b-{\bar b}\|_{L^2(U,\Gamma[a])}^2\le {\eta}(a)^2\cdot \|b-{\bar b}\|_{H^{1,2}(U)}^2\le  {\eta}(a)^2\cdot(1+\mu_1(U)^{-1})\cdot\D[b]\, .
\]
Since, by Theorem \ref{DF} vi), $H^{1,2}(U)$ is a form core of $(\D,\F^a)$ on $L^2(U,\Gamma[a])$, we get
\begin{equation*} \begin{split}
\mu_1(a)&= \inf_{b\in \F^a\,,\,\D[b]\not=0}\frac{\D[b]}{||b-p_a(b)||_{L^2(U,\Gamma[a])}}
= \inf_{b\in H^{1,2}(U)\,,\,\D[b]\not=0}\frac{\D[b]}{||b-p_a(b)||_{L^2(U,\Gamma[a])}} \\
&\geq \eta(a)^{-2}\big(1+\mu_1(U)^{-1}\big)^{-1}=\frac{1}{\eta(a)^2}\,\frac{\mu_1(U)}{1+\mu_1(U)}\,.
\end{split}\end{equation*}
\end{proof}
\noindent
We end this section with results relating the fundamental tone to the seminorm of a multiplier.
\begin{prop}
Let $a\in\M(H^{1,2}(U))$ be a multiplier with unbounded energy $\D[a]=+\infty$. Then $\mu_1(a)\le\frac{1}{\eta(a)^2}$.
\end{prop}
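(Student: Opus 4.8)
The plan is to separate the trivial case from the substantial one and then extract the inequality from the Poincar\'e inequality built into the fundamental tone. Since $\D[a]=+\infty$ the multiplier $a$ is non-constant, hence the multiplier seminorm satisfies $0<\eta(a)<+\infty$ (it is finite because $a\in\M(H^{1,2}(U))$ and strictly positive because $\eta$ vanishes only on constants), so $1/\eta(a)^2$ is a well defined positive number. If $\mu_1(a)=0$ the asserted bound $\mu_1(a)\le 1/\eta(a)^2$ is immediate, so from now on I assume $\mu_1(a)>0$.

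The key point, and the only step which genuinely uses the hypothesis $\D[a]=+\infty$, is to show that $0\notin {\rm Sp}(\D,\F^a)$. Suppose instead $0\in {\rm Sp}(\D,\F^a)$. Since $\mu_1(a)=\inf\bigl({\rm Sp}(\D,\F^a)\setminus\{0\}\bigr)>0$, the value $0$ is isolated in the spectrum, hence an eigenvalue of the self-adjoint operator associated with $(\D,\F^a)$ on $L^2(U,\Gamma[a])$. Any corresponding eigenfunction $b\in\F^a$ has $\D[b]=0$, so by irreducibility of $(\D,H^{1,2}(U))$ on $L^2(U,dx)$ (valid because $U$ is connected) $b$ is a non-zero constant; consequently $1\in H^{1,2}_e(U)$ and $1\in L^2(U,\Gamma[a])$, which forces $\Gamma[a](U)=\D[a]<+\infty$, contradicting $\D[a]=+\infty$. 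This is the same mechanism used in Propositions \ref{property1} and \ref{property2}, and it is the main (modest) obstacle; everything else is routine.

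Hence $\mu_1(a)=\inf {\rm Sp}(\D,\F^a)$, and, exactly as in Proposition \ref{property2} i$_2$), the Poincar\'e inequality
\[
\mu_1(a)\cdot\|b\|^2_{L^2(U,\Gamma[a])}\le\D[b]\qquad b\in\F^a
\]
holds. Restricting to $b\in H^{1,2}(U)$, which lies in $\F^a$ by Proposition \ref{DF} i), and using $\D[b]\le\D[b]+\|b\|^2_{L^2(U,dx)}=\|b\|^2_{H^{1,2}(U)}$, I obtain
\[
\mu_1(a)\cdot\|b\|^2_{L^2(U,\Gamma[a])}\le\|b\|^2_{H^{1,2}(U)}\qquad b\in H^{1,2}(U).
\]
Dividing by $\|b\|^2_{H^{1,2}(U)}$ for $b\neq 0$ and taking the supremum, the supremum of $\|b\|^2_{L^2(U,\Gamma[a])}/\|b\|^2_{H^{1,2}(U)}$ equals $\eta(a)^2$ since $\eta(a)$ is precisely the norm of the embedding $i_a:H^{1,2}(U)\to L^2(U,\Gamma[a])$, $i_a(b)=b$. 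Therefore $\mu_1(a)\,\eta(a)^2\le 1$, that is, $\mu_1(a)\le 1/\eta(a)^2$, as claimed.
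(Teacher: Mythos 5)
Your proof is correct and follows essentially the same route as the paper: the hypothesis $\D[a]=+\infty$ removes all nonzero constants from $L^2(U,\Gamma[a])$, which yields the uncentered Poincar\'e inequality $\mu_1(a)\cdot\|b\|^2_{L^2(U,\Gamma[a])}\le\D[b]\le\|b\|^2_{H^{1,2}(U)}$ on $H^{1,2}(U)$, and the conclusion then follows from $\eta(a)=\|i_a\|_{H^{1,2}(U)\to L^2(U,\Gamma[a])}$. The paper states this more tersely (the projection $p_a$ onto constants vanishes identically), whereas you justify the same Poincar\'e inequality by an explicit spectral case analysis; the content is the same.
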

\begin{proof}
Under the hypothesis, the subspace of constant functions in $L^2(U,\Gamma[a])$ reduces to the zero function only and the the projection $p_a$ vanishes identically so that
\[
\mu_1(a)\cdot\|b\|^2_{L^2(U,\Gamma[a])} \le \D[b]\le \|b\|^2_{H^{1,2}(U)}\qquad b\in H^{1,2}(U)\, .
\]
Since, by Theorem \ref{DF} vi), $H^{1,2}(U)$ is dense in $\F^a$, by the definition of $\eta$ we get the thesis.
\end{proof}
To treat the finite energy case, we need the following result of independent interest. It states that the energy measure of a finite energy multiplier is a finite energy measure.
\begin{prop}
If $a\in\M(H^{1,2}(U))\cap H^{1,2}(U)$ is a finite energy multiplier, then its energy measure $\Gamma[a]$ is a finite energy measure with respect to the Dirichlet space $(\D,H^{1,2}(U))$. Its 1-potential $G(a)\in H^{1,2}(U)$, defined by the identity
\[
\int_U b\,d\Gamma[a]=(b|G(a))_{H^{1,2}(U)}=\D(b|G(a))+(b|G(a))_{L^2(U,dx)}\qquad b\in H^{1,2}(U),
\]
has a norm bounded by
\[
\|G(a)\|_{H^{1,2}(U)}   \le\eta(a)\cdot \sqrt{\D[a]}\, .
\]
In particular, if $U$ has finite volume, then for all multipliers $a\in \M(H^{1,2}(U))$ we have
\[
\|G(a)\|_{H^{1,2}(U)}   \le |U|\cdot\eta(a)^2\, .
\]
\end{prop}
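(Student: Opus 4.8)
The plan is to verify the defining identity for the $1$-potential directly from the Leibnitz rule and then estimate its norm by Cauchy--Schwarz. First I would observe that for $a\in\M(H^{1,2}(U))\cap H^{1,2}(U)$ and any $b\in H^{1,2}(U)$, the product $ab$ lies in $H^{1,2}(U)$, and the Leibnitz rule \eqref{leibnitz} gives $\nabla(ab)=(\nabla a)b+a\nabla b$ in $L^2(U,\mathbb{R}^n)$. Pairing $\nabla a$ against $\nabla(ab)-a\nabla b=(\nabla a)b$ and integrating, one gets
\[
\int_U b\,d\Gamma[a]=\int_U |\nabla a|^2\,b\,dx=\int_U \nabla a\cdot\bigl(\nabla(ab)-a\nabla b\bigr)\,dx
=\D(ab\,|\,a)-\int_U a\,\nabla a\cdot\nabla b\,dx\,.
\]
Since $2a\nabla a=\nabla(a^2)$ and $a^2\in H^{1,2}(U)$ (being $a\cdot a$ with $a$ a multiplier of finite energy), the last term is $\tfrac12\D(a^2\,|\,b)$, so the functional $b\mapsto\int_U b\,d\Gamma[a]$ is continuous on $H^{1,2}(U)$ for the graph norm; this is exactly the statement that $\Gamma[a]$ is a finite-energy measure, and the Riesz representation theorem in the Hilbert space $(H^{1,2}(U),\|\cdot\|_{H^{1,2}(U)})$ produces a unique $G(a)\in H^{1,2}(U)$ satisfying the displayed identity.

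For the norm bound I would exploit the identity applied to $b=G(a)$ itself, together with the embedding $i_a:H^{1,2}(U)\to L^2(U,\Gamma[a])$ of norm $\eta(a)$ recalled in the text. Indeed
\[
\|G(a)\|^2_{H^{1,2}(U)}=\bigl(G(a)\,\big|\,G(a)\bigr)_{H^{1,2}(U)}=\int_U G(a)\,d\Gamma[a]
=\int_U 1\cdot G(a)\,d\Gamma[a]\,,
\]
and here I want to bring in $\sqrt{\D[a]}$. The cleanest route is: the constant function $1$ need not lie in $H^{1,2}(U)$, so instead write $\int_U G(a)\,d\Gamma[a]=\|i_a(G(a))\|^2_{\cdot}$? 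No --- rather, use Cauchy--Schwarz in $L^2(U,\Gamma[a])$ after factoring through $a$. Note $\int_U G(a)\,d\Gamma[a]=\int_U G(a)\,|\nabla a|^2\,dx$; by Cauchy--Schwarz against $d\Gamma[a]=|\nabla a|^2dx$ this is $\le \|G(a)\|_{L^2(U,\Gamma[a])}\cdot\Gamma[a](U)^{1/2}$ only when $\Gamma[a]$ is finite, which is the finite-volume case. In the general case one instead pairs against $\nabla a$: $\int_U G(a)|\nabla a|^2dx=\int_U\nabla a\cdot\bigl(G(a)\nabla a\bigr)dx\le\sqrt{\D[a]}\cdot\|\,G(a)\nabla a\,\|_{L^2(U,\mathbb{R}^n)}=\sqrt{\D[a]}\cdot\|G(a)\|_{L^2(U,\Gamma[a])}\le\sqrt{\D[a]}\cdot\eta(a)\cdot\|G(a)\|_{H^{1,2}(U)}$, using that $\|G(a)\|_{L^2(U,\Gamma[a])}=\|i_a(G(a))\|\le\eta(a)\|G(a)\|_{H^{1,2}(U)}$. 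Dividing by $\|G(a)\|_{H^{1,2}(U)}$ yields $\|G(a)\|_{H^{1,2}(U)}\le\eta(a)\sqrt{\D[a]}$.

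For the finite-volume addendum, recall that then $\M(H^{1,2}(U))\subseteq H^{1,2}(U)$ automatically and, as noted in the text, $\bigl(\tfrac{1}{|U|}\D[a]\bigr)^{1/2}\le\eta(a)$, i.e. $\sqrt{\D[a]}\le |U|^{1/2}\eta(a)$. Substituting into the bound just proved gives $\|G(a)\|_{H^{1,2}(U)}\le\eta(a)\cdot|U|^{1/2}\eta(a)=|U|^{1/2}\eta(a)^2$; to land on the stated $|U|\cdot\eta(a)^2$ I would either absorb the $|U|^{1/2}$ vs $|U|$ discrepancy (harmless if $|U|\ge 1$, otherwise the sharper bound $|U|^{1/2}\eta(a)^2$ is what the argument actually gives and the weaker stated form follows only under a normalization) or, more likely, the intended estimate uses $\|1\|_{L^2(U,\Gamma[a])}^2=\Gamma[a](U)=\D[a]\le|U|\eta(a)^2$ together with $\int_U G(a)\,d\Gamma[a]\le\|G(a)\|_{L^2(U,\Gamma[a])}\cdot\Gamma[a](U)^{1/2}\le\eta(a)\|G(a)\|_{H^{1,2}(U)}\cdot\bigl(|U|\eta(a)^2\bigr)^{1/2}$, again giving $|U|^{1/2}\eta(a)^2$. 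I expect the main obstacle is precisely this bookkeeping at the end --- tracking the exact power of $|U|$ and $\eta(a)$ --- rather than the existence of $G(a)$, which is a routine Riesz-representation argument once the Leibnitz computation shows the pairing $b\mapsto\int_U b\,d\Gamma[a]$ is $H^{1,2}(U)$-continuous.
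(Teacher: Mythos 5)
Your proof is correct and its core is the same estimate the paper uses: writing $\int_U b\,d\Gamma[a]=(\nabla a\,|\,(\nabla a)b)_{L^2(U,\mathbb{R}^n)}$ and applying Cauchy--Schwarz together with $\|(\nabla a)b\|_{L^2}\le\eta(a)\|b\|_{H^{1,2}(U)}$; the paper bounds the functional norm directly and lets Riesz representation deliver both $G(a)$ and the estimate at once, whereas you first establish continuity by a (correct but unnecessary) Leibnitz detour through $\D(ab|a)-\tfrac12\D(a^2|b)$ and then recover the sharp constant by self-pairing with $b=G(a)$ --- same substance, slightly longer route. On the finite-volume addendum you are right to be suspicious: the paper's own proof substitutes $\sqrt{\D[a]}\le\sqrt{|U|}\,\eta(a)$ into the first bound and therefore also only yields $\|G(a)\|_{H^{1,2}(U)}\le\sqrt{|U|}\,\eta(a)^2$, so the stated exponent on $|U|$ appears to be a slip (the printed bound follows only when $|U|\ge1$), and your sharper $|U|^{1/2}\eta(a)^2$ is what the argument actually proves.
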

\begin{proof}
The first bound follows from the definitions of $\eta(a)$ and $G(a)$, the estimates
\[
\Bigl|\int_U b\,d\Gamma[a]\Bigr|=|(\nabla a|(\nabla a)b)|\le \sqrt{\D[a]}\cdot\|(\nabla a)b\|\le \sqrt{\D[a]}\cdot\eta(a)\cdot\|b\|_{H^{1.2}(U)}\qquad b\in H^{1,2}(U)
\]
and the density of $H^{1,2}(U)$ in $\F^a$ (Theorem \ref{DF} vi)). The second bound follows from the fact that if $U$ has finite volume, then multipliers have finite energy and $\sqrt{\D[a]}\le\sqrt{|U|}\cdot\eta(a)$.
\end{proof}

\begin{prop}
If $a\in\F\M(H^{1,2}(U))$ is a finite energy multiplier and $G(a)$ is the potential of its energy measure $\Gamma[a]$, then
\[
0\le\eta(a)-\frac{\|G(a)\|_{H^{1,2}(U)}}{\sqrt{\D[a]}}\le\mu_1(a)^{-1/2}\, .
\]
\end{prop}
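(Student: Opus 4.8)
The plan is to obtain the left-hand inequality directly from the preceding Proposition and to extract the right-hand one from the spectral gap of $(\D,\F^a)$. Dividing the estimate $\|G(a)\|_{H^{1,2}(U)}\le\eta(a)\sqrt{\D[a]}$ of the preceding Proposition by $\sqrt{\D[a]}$ gives at once $\eta(a)-\|G(a)\|_{H^{1,2}(U)}/\sqrt{\D[a]}\ge 0$; one may equally note that the defining identity of $G(a)$ says exactly that $G(a)=i_a^*\mathbf 1$, where $i_a\colon H^{1,2}(U)\to L^2(U,\Gamma[a])$ is the canonical embedding of norm $\eta(a)$ and $\mathbf 1$ the constant function of norm $\|\mathbf 1\|_{L^2(U,\Gamma[a])}=\sqrt{\Gamma[a](U)}=\sqrt{\D[a]}$, so that $\|G(a)\|_{H^{1,2}(U)}=\|i_a^*\mathbf 1\|_{H^{1,2}(U)}\le\eta(a)\sqrt{\D[a]}$. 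The substance is therefore the right-hand inequality, and here I would first dispose of the case $\mu_1(a)=0$, in which $\mu_1(a)^{-1/2}=+\infty$ while the left side is finite, so that nothing is to prove; thus one may assume $\mu_1(a)>0$.

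Next I would use the dichotomy of Proposition \ref{property2}. If $0\in{\rm Sp}(\D,\F^a)$, then $(\D,H^{1,2}(U))$ is recurrent, and since $a$ has finite energy, Proposition \ref{property1} shows $0$ is a non-degenerate eigenvalue of $(\D,\F^a)$, with eigenspace the constants; this is precisely the regime where the Poincar\'e--Wirtinger inequality \eqref{sp4} is valid. If instead $0\notin{\rm Sp}(\D,\F^a)$, then $(\D,H^{1,2}(U))$ is transient and $\mu_1(a)=\inf{\rm Sp}(\D,\F^a)$, so the plain Poincar\'e inequality $\mu_1(a)\|b\|^2_{L^2(U,\Gamma[a])}\le\D[b]$ holds for all $b\in\F^a$. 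In this transient case one concludes immediately: for $b\in H^{1,2}(U)\subseteq\F^a$ one gets $\|b\|^2_{L^2(U,\Gamma[a])}\le\D[b]/\mu_1(a)\le\|b\|^2_{H^{1,2}(U)}/\mu_1(a)$, hence $\eta(a)\le\mu_1(a)^{-1/2}$, and since $\|G(a)\|_{H^{1,2}(U)}\ge 0$ the asserted bound follows a fortiori.

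In the recurrent case the key computation is this. Fix $b\in H^{1,2}(U)\subseteq\F^a$. As $p_a$ is the $L^2(U,\Gamma[a])$-orthogonal projection onto the constants, Pythagoras gives
\[
\|b\|^2_{L^2(U,\Gamma[a])}=|p_a(b)|^2\,\D[a]+\|b-p_a(b)\|^2_{L^2(U,\Gamma[a])},\qquad p_a(b)=\frac{1}{\D[a]}\int_U b\,d\Gamma[a]=\frac{(b|G(a))_{H^{1,2}(U)}}{\D[a]},
\]
where the last equality is the defining identity of $G(a)$. Inequality \eqref{sp4} bounds the second summand by $\D[b]/\mu_1(a)\le\|b\|^2_{H^{1,2}(U)}/\mu_1(a)$, and Cauchy--Schwarz in $H^{1,2}(U)$ bounds $|p_a(b)|^2\,\D[a]$ by $\|G(a)\|^2_{H^{1,2}(U)}\,\|b\|^2_{H^{1,2}(U)}/\D[a]$. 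Adding these and passing to the supremum over $b$ with $\|b\|_{H^{1,2}(U)}\le 1$ yields
\[
\eta(a)^2\le\frac{\|G(a)\|^2_{H^{1,2}(U)}}{\D[a]}+\frac{1}{\mu_1(a)}\,.
\]
Since both terms on the right are nonnegative, the elementary bound $\sqrt{x+y}\le\sqrt x+\sqrt y$ gives $\eta(a)\le\|G(a)\|_{H^{1,2}(U)}/\sqrt{\D[a]}+\mu_1(a)^{-1/2}$, i.e.\ the right-hand inequality.

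I do not expect a serious analytic obstacle; the only delicate point is the bookkeeping that makes the two cases exhaustive and pairs each with the right Poincar\'e inequality, which is exactly what Propositions \ref{property1} and \ref{property2} supply, given the standing hypotheses that $a$ has finite energy and that $\Gamma[a]$ has full support (so $\D[a]=\Gamma[a](U)>0$ and $\mu_1(a)$ is well defined). Everything else reduces to Cauchy--Schwarz, the Pythagorean identity for $p_a$, and concavity of the square root.
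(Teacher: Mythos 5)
Your proof is correct and follows essentially the same route as the paper: the left inequality is quoted from the preceding proposition, and the right one comes from splitting $b$ into $p_a(b)$ and $b-p_a(b)$, bounding the constant part by $\|G(a)\|_{H^{1,2}(U)}\|b\|_{H^{1,2}(U)}/\sqrt{\D[a]}$ via Cauchy--Schwarz and the orthogonal part by the spectral gap. The only differences are cosmetic --- you use Pythagoras followed by $\sqrt{x+y}\le\sqrt{x}+\sqrt{y}$ where the paper uses the triangle inequality directly, and you make explicit the recurrent/transient/$\mu_1(a)=0$ case analysis that the paper leaves implicit.
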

\begin{proof}
By definition of the fundamental tone $\mu_1(a)\cdot \|b-p_a(b)\|^2_{L^2(U,\Gamma[a])}\le \D[b]$ for $b\in H^{1,2}(U)$ we have
\[
\begin{split}
\|b\|_{L^2(U,\Gamma[a])}&\le \|b-p_a(b)\|_{L^2(U,\Gamma[a])} + \|p_a(b)\|_{L^2(U,\Gamma[a])} \\
&\le \mu_1(a)^{-1/2} \cdot \sqrt{\D[b]}+\|p_a(b)\|_{L^2(U,\Gamma[a])} \\
&\le \mu_1(a)^{-1/2}\cdot \|b\|_{H^{1,2}(U)}+\sqrt{\D[a]}\cdot |p_a(b)| \\
&\le \mu_1(a)^{-1/2}\cdot \|b\|_{H^{1,2}(U)}+\frac{\|G(a)\|_{H^{1,2}(U)}}{\sqrt{\D[a]}}\cdot \|b\|_{H^{1,2}(U)} \\
&\le \Bigl(\mu_1(a)^{-1/2}+\frac{\|G(a))\|_{H^{1,2}(U)}}{\sqrt{\D[a]}}\Bigr)\cdot \|b\|_{H^{1,2}(U)}
\end{split}
\]
and the thesis follows by Theorem \ref{DF} vi).
\end{proof}

\section{Bounded distortion, quasiconformal and conformal maps}

We briefly recall in this section, the definitions of maps with bounded distortion, quasiconformal and conformal maps as well as some result about the integrability of the Jacobian determinant, needed in the forthcoming sections. For the background material we refer to the monographes \cite{IM}, \cite{Re1} and \cite{V}.
\vskip0.2truecm
For a transformation $\gamma\in H^{1,n}_{\rm loc}(U,\mathbb{R}^n)$, its {\it Jacobian determinant} $J_\gamma(x):={\rm det\,}(\gamma^\prime(x))$, defined for $dx$-a.e. $x\in U$, belongs to $L^1_{\rm loc}(U)$. We will need however a subtler result asserting that the local integrability of $J_\gamma $ holds true under milder regularity assumptions on $\gamma$. It is a consequence of the following
\begin{thm}{([IM Theorem 6.3.2])}\label{J1}
Let $B\subset\mathbb{R}^n$ be an open ball and $\gamma:B\to\mathbb{R}^n$ a map lying in the Sobolev class $H^{1,1}(B,\mathbb{R}^n)$. Then there exist a measure zero set $E\subset B$ such that
\begin{equation}
\int_U |J_\gamma(x)|\, dx = \int_{\mathbb{R}^n} N_\gamma(y,U\setminus E)\, dy
\end{equation}
for any measurable $U\subset B$, where the measurable function $N_\gamma (\cdot,F)$ is defined, for a measurable set $F\subset U$, as follows
\[
N_\gamma (y,F)=\sharp\{x\in F:\gamma(x)=y)\}\qquad dy{\rm -a.e.}\,\,y\in \mathbb{R}^n\, .
\]
\end{thm}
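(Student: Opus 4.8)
The plan is to reduce the identity to the classical area formula for Lipschitz maps (Federer's area formula) by splitting $\gamma$ into countably many Lipschitz pieces away from a single null set. First I would record that a map $\gamma\in H^{1,1}(B,\mathbb{R}^n)$, being absolutely continuous on lines, is approximately differentiable at $dx$-a.e.\ point of $B$, with approximate differential the a.e.\ defined matrix $\gamma'(x)=[\partial_j\gamma_i(x)]$. Then I would invoke the Lusin-type approximation theorem for Sobolev (more generally, approximately differentiable) maps: for each $\varepsilon>0$ there is a closed set $F_\varepsilon\subset B$ with $|B\setminus F_\varepsilon|<\varepsilon$ such that $\gamma|_{F_\varepsilon}$ is Lipschitz and differentiable on $F_\varepsilon$ with differential $\gamma'$, and the underlying Whitney extension furnishes a Lipschitz map on all of $\mathbb{R}^n$ agreeing with $\gamma$ on $F_\varepsilon$ and with classical derivative equal to $\gamma'$ a.e.\ on $F_\varepsilon$. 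Applying this with $\varepsilon=1/k$ and disjointifying ($B_1:=F_1$ and $B_k:=F_{1/k}\setminus(B_1\cup\dots\cup B_{k-1})$ for $k\ge 2$), I obtain pairwise disjoint Borel sets $B_1,B_2,\dots$ and Lipschitz maps $\gamma_k:\mathbb{R}^n\to\mathbb{R}^n$ with $\gamma=\gamma_k$ on $B_k$, $\gamma_k'=\gamma'$ a.e.\ on $B_k$, and $E:=B\setminus\bigcup_k B_k$ of measure zero (indeed $|B\setminus\bigcup_k B_k|\le|B\setminus F_{1/k}|<1/k$ for every $k$). Note $E$ does not depend on the set $U$, and on $B\setminus E$ the point-count $\sharp\{x:\gamma(x)=y\}$ is unambiguous.

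Next, fix a measurable $U\subset B$. On each piece the classical area formula for the Lipschitz map $\gamma_k$, applied to the measurable set $U\cap B_k$, gives
\[
\int_{U\cap B_k}|J_{\gamma_k}(x)|\,dx=\int_{\mathbb{R}^n}N_{\gamma_k}(y,U\cap B_k)\,dy\, ,
\]
with the right-hand side a measurable function of $y$. Since $\gamma=\gamma_k$ on $B_k$ we have $N_{\gamma_k}(y,U\cap B_k)=N_\gamma(y,U\cap B_k)$, and since $\gamma_k'=\gamma'$ a.e.\ on $B_k$ we have $J_{\gamma_k}=J_\gamma$ a.e.\ there. The sets $\{U\cap B_k\}_k$ are pairwise disjoint with union $U\setminus E$, so summing over $k$ and using countable additivity of the Lebesgue integral on the left, additivity of the counting function over disjoint sets together with $N_\gamma(y,U\setminus E)=\sum_k N_\gamma(y,U\cap B_k)$ and monotone convergence on the right, and $|E|=0$, one obtains
\[
\int_{U}|J_\gamma(x)|\,dx=\int_{U\setminus E}|J_\gamma(x)|\,dx=\sum_k\int_{U\cap B_k}|J_\gamma|\,dx=\sum_k\int_{\mathbb{R}^n}N_\gamma(y,U\cap B_k)\,dy=\int_{\mathbb{R}^n}N_\gamma(y,U\setminus E)\,dy\, ,
\]
which is the desired identity; measurability of $y\mapsto N_\gamma(y,U\setminus E)$ comes out as the monotone limit of the measurable partial sums.

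The step I expect to be the real obstacle is the first one. Unlike Lipschitz maps, or maps in $H^{1,p}$ with $p>n$, a generic $H^{1,1}$ map is neither continuous nor satisfies Lusin's condition (N), so the area formula genuinely fails without removing an exceptional set — this is precisely the role of $E$. The substantive analytic input is therefore the combination of a.e.\ approximate differentiability of $H^{1,1}$ maps with the Whitney/Lusin decomposition into Lipschitz pieces, \emph{together with} the matching of the a.e.\ (approximate) derivative of $\gamma$ with the honest derivatives of those pieces; once this is in hand, everything else is bookkeeping with disjoint covers and monotone convergence. I would cite the approximate-differentiability and Lusin-approximation facts as standard (they can be found, for instance, in Federer's treatise or in the monograph [IM] itself) rather than reprove them.
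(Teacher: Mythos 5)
The paper offers no proof of this statement: it is imported verbatim as [IM, Theorem 6.3.2] from Iwaniec--Martin, so there is nothing internal to compare against. Your argument --- a.e.\ approximate differentiability of $H^{1,1}$ maps, the Lusin--Whitney decomposition of $B$ into a null set $E$ plus countably many pieces on which $\gamma$ agrees with a Lipschitz map whose derivative matches $\gamma'$, the classical area formula on each piece, and summation by monotone convergence --- is correct and is essentially the standard proof, the one given in [IM] itself; you also correctly isolate the one point that matters, namely that $E$ is chosen once (independently of $U$) and that without removing it the formula fails because $H^{1,1}$ maps need not satisfy Lusin's condition (N).
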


\begin{cor}([IM Corollary 6.3.1])\label{J2} Let $U\subset\mathbb{R}^n$ be an open set and $\gamma:U\to\mathbb{R}^n$ a map belonging to $H^{1,1}_{\rm loc}(U,\mathbb{R}^n)\cap L^\infty_{\rm loc}(U,\mathbb{R}^n)$ such that, for some integer $N\ge 1$, there exist a measure zero set $E\subset U$ for which $\gamma:U\setminus E\to\mathbb{R}^n$ is at most $N$-to-$1$.
\par\noindent
Then $J_\gamma\in L^1_{\rm loc}(U)$. In particular, this is the case for local homeomorphisms in $H^{1,1}_{\rm loc}(U,\mathbb{R}^n)$.
\end{cor}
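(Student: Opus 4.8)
The plan is to reduce the statement, which is local, to the area formula of Theorem~\ref{J1} applied on balls. Since $U$ is a countable union of open balls $B$ with $\overline B\subset U$, it suffices to prove $J_\gamma\in L^1(B)$ for each such $B$. On one of them, $\gamma|_B$ lies in $H^{1,1}(B,\R^n)$ and, by the hypothesis $\gamma\in L^\infty_{\rm loc}(U,\R^n)$, is essentially bounded with $R_B:=\norm{\gamma}_{L^\infty(B,\R^n)}<+\infty$. Thus Theorem~\ref{J1} supplies a measure zero set $E_B\subset B$ with
\[
\int_\Omega|J_\gamma(x)|\,dx=\int_{\R^n}N_\gamma(y,\Omega\setminus E_B)\,dy\qquad\text{for every measurable }\Omega\subseteq B .
\]

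The key preliminary observation is that the exceptional set $E_B$ may be freely enlarged by any Lebesgue-null set without disturbing the identity for $\Omega=B$. I would set
\[
E_B':=E_B\cup(E\cap B)\cup\{x\in B:|\gamma(x)|>R_B\}
\]
and check, by applying the formula above to the null set $\Omega:=E_B'\setminus E_B$, that $\int_{\R^n}N_\gamma(y,E_B'\setminus E_B)\,dy=\int_{E_B'\setminus E_B}|J_\gamma|\,dx=0$. Since $B\setminus E_B$ is the disjoint union of $B\setminus E_B'$ and $E_B'\setminus E_B$, additivity of the counting function then gives $\int_B|J_\gamma|\,dx=\int_{\R^n}N_\gamma(y,B\setminus E_B')\,dy$.

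With this in hand the bound is immediate. Because $B\setminus E_B'\subseteq U\setminus E$, the at-most-$N$-to-$1$ hypothesis forces $N_\gamma(y,B\setminus E_B')\le N$ for all $y\in\R^n$; and because $E_B'$ contains the null set $\{x\in B:|\gamma(x)|>R_B\}$, we have $\gamma(B\setminus E_B')\subseteq\overline{B(0,R_B)}$, whence $N_\gamma(y,B\setminus E_B')=0$ for $|y|>R_B$. Therefore $\int_B|J_\gamma|\,dx\le N\,|B(0,R_B)|<+\infty$, which is the claim. For the final assertion, a local homeomorphism $\gamma\in H^{1,1}_{\rm loc}(U,\R^n)$ is continuous, hence lies in $L^\infty_{\rm loc}(U,\R^n)$, and is injective near each point, so the hypotheses are met on a neighbourhood of every point with $N=1$ and $E=\emptyset$; applying the above on balls contained in such neighbourhoods finishes the proof.

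The only non-routine ingredient is the null-set bookkeeping of the second paragraph: the set $E$ outside which $\gamma$ is assumed at most $N$-to-$1$ is a priori unrelated to the exceptional set produced by Theorem~\ref{J1}, and one must know that the two can be merged; the trick of verifying this by feeding a null set back into Theorem~\ref{J1} is what makes the argument go through, after which only a one-line measure estimate remains.
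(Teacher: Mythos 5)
The paper offers no proof of this corollary: it is quoted directly from [IM, Corollary 6.3.1], so there is no in-paper argument to compare against. Your derivation from Theorem \ref{J1} is correct and complete. The null-set bookkeeping you flag as the only delicate point is handled properly: $E\cap B$, the exceptional set $E_B$ from Theorem \ref{J1}, and the set $\{x\in B:|\gamma(x)|>R_B\}$ (null because $R_B$ is an essential supremum) are all Lebesgue-null, and feeding $\Omega=E_B'\setminus E_B$ back into the area formula legitimately shows the enlargement costs nothing; the measurability of $y\mapsto N_\gamma(y,\cdot)$ needed to integrate is supplied by the statement of Theorem \ref{J1} itself. One small streamlining: since $E_B\subseteq E_B'$, applying the formula of Theorem \ref{J1} directly with $\Omega:=B\setminus E_B'$ gives $\int_{B\setminus E_B'}|J_\gamma|\,dx=\int_{\R^n}N_\gamma(y,B\setminus E_B')\,dy$ in one step, and the left side equals $\int_B|J_\gamma|\,dx$ because $E_B'$ is null; this bypasses the additivity argument entirely. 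The final bound $\int_B|J_\gamma|\,dx\le N\,|B(0,R_B)|$ and the reduction of the local-homeomorphism case to $N=1$, $E=\emptyset$ on small balls are both correct.
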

\noindent
We now recall the classes of transformations we are interested in and refer to [IM Chapter 6], [R Chapter I.4] and [V] for details.
\par\noindent
\begin{defn}\label{BD}(Bounded distortion maps, quasiconformal and conformal maps)\par\noindent
A map $\gamma:U\to\mathbb{R}^n$, defined on an open set $U\subseteq\mathbb{R}^n$, is said to have {\it bounded distortion} if satisfies the following requirements:\par\noindent
\item i) it belongs to the Sobolev space $\gamma\in H^{1,1}_{\rm loc}(U,\mathbb{R}^n)$,\par\noindent
\item ii) the Jacobian determinant is locally integrable $J_\gamma\in L^1_{\rm loc}(U)$ and it has constant sign in $U$,\par\noindent
\item iii) there exists $K>0$ such that
\begin{equation}\label{qc}
||\gamma^\prime (x)||\le K\cdot |J_\gamma (x)|^{1/n}\qquad dx-a.e.\,\, x\in U\, .
\end{equation}
The smallest constant $K$ satisfying iii) is called the {\it distortion coefficient} and denoted by $K(\gamma)$. By Hadamard's inequality $|{\rm det\,}A|^{1/n}\le \|A\|$ for matrices  $A\in\mathbb{M}_n(\mathbb{R})$, one has $K(\gamma)\ge 1$.
\vskip0.2truecm\noindent
A map with bounded distortion is said to be {\it quasiconformal} if it is an homeomorphism. A quasiconformal map such that
$K(\gamma)=1$ is called {\it conformal}.
\end{defn}
\begin{rem}
It follows from ii) and iii) that a bounded distortion map necessarily belongs to $ H^{1,n}(U)$. Moreover, it has been proved in [GV] (see also \cite[Chapter 7]{IM} ) that maps with bounded distortion are continuous $\gamma\in C(U,\R^n)$. It follows from Corollary \ref{J2} that for local homeomorphisms $\gamma$ in $H^{1,1}_{\rm loc}(U)$, the local integrability of the Jacobian determinant $J_\gamma$ in Definition \ref{BD}, is automatically satisfied.
\end{rem}
\begin{rem}\label{Alternative}
Alternative definitions of the class of bounded distortion maps may be given, see e.g. \cite{IM}, \cite{Re1} and \cite{V}. We found the adopted choice above convenient for the proof of the main results of Section 5.
\end{rem}

\subsection{M\"obius transformations of $\R^n$ and the conformal group of $\mathbb{S}^n$, $n\ge 3$}
In the next section we will deal with the specific case $U=\R^n$ in dimension $n\ge 3$ and we will consider the M\"obius group $G(\mathbb{R}^n)$ of all M\"obius transformations of $\R^n$. These are the $dx$-a.e. defined, measurable transformations on $\R^n$ which are compositions of a finite number of the elementary ones given by  {\it translations} and {\it rotations},
\[
\gamma_y (x):=x+y\, ,\qquad \gamma_R(x):=Rx\, ,\qquad y\in \mathbb{R}^n\, ,\quad R\in O(n)\, ,
\]
(which are isometries and together generate the Euclidean subgroup), {\it dilations}
\[
\gamma_s(x):=sx\qquad s\in\mathbb{R}_+
\]
and the {\it inversion} $\gamma_i :\mathbb{R}^n\setminus\{0\}\to\mathbb{R}^n\setminus\{0\}$ with respect to the unit sphere centered at $0\in\R^n$
\[
\gamma_i (x):=\frac{x}{|x|^2}\qquad x\in\mathbb{R}^n\setminus\{0\}\, .
\]
Denote by $J_\gamma$ the Jacobian of a transformation $\gamma\in G(\mathbb{R}^n)$. As translations and rotations are isometries they gives $J_{\gamma_y} (x)=1$, $|J_{\gamma_R}(x)|=1$ for all $x\in\mathbb{R}^n$. For dilations and the inversion it is easy to check that
\[
J_{\gamma_s} (x)=s^n\, ,\qquad J_{\gamma_i} (x)=-|x|^{-2n}\, ,\qquad x\in\mathbb{R}^n\setminus\{0\}\, .
\]
Transformations $\gamma\in G(\R^n)$ are invertible and one has $J_{\gamma^{-1}}(y)^{-1}=J_\gamma (\gamma^{-1}(y))$ $dy$-a.e. on $\mathbb{R}^n$. Notice that all of them are conformal maps.
\vskip0.2truecm\noindent
It is well known and easy to check that $G(\R^n)$ acts isometrically on the Lebesgue spaces:
\begin{lem}\label{grouprepresentation}
For any fixed $p\in [1,+\infty]$ and $\gamma\in G(\R^n)$, the following is a well defined, isometric transformation
\[
\gamma_p^* :L^p (\mathbb{R}^n,dx)\to L^p (\mathbb{R}^n,dx)\qquad \gamma_p^* (f)(y):=|J_{\gamma^{-1}}(y)|^{1/p}\cdot  f(\gamma^{-1}(y))\qquad dy-a.e. \quad y\in \R^n\, .
\]
Isometric representations of the conformal group in Lebesgue spaces are given by
\[
\pi_p:G(\R^n)\to \mathcal{B}(L^p(\R^n,dx))\qquad \pi_p(\gamma):=\gamma_p^*\, .
\]
\end{lem}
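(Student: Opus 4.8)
The plan is to verify directly that the stated formula defines an isometric linear operator and that $\gamma\mapsto\gamma_p^*$ is a group homomorphism, reducing everything to the transformation rule for Lebesgue integrals under the (a.e.\ defined) diffeomorphisms in $G(\R^n)$. First I would record the change-of-variables formula: for $\gamma\in G(\R^n)$, which is smooth and invertible off a Lebesgue-null set, and for any nonnegative measurable $g$ on $\R^n$, one has $\int_{\R^n} g(\gamma^{-1}(y))\,dy=\int_{\R^n} g(x)\,|J_\gamma(x)|\,dx$, equivalently $\int_{\R^n} g(y)\,dy=\int_{\R^n} g(\gamma^{-1}(y))\,|J_{\gamma^{-1}}(y)|^{-1}\cdot|J_{\gamma^{-1}}(y)|\,dy$; the Jacobian identity $J_{\gamma^{-1}}(y)^{-1}=J_\gamma(\gamma^{-1}(y))$ stated just above is what links the two forms. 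Since each generator (translation, rotation, dilation, inversion) has locally bounded nonvanishing Jacobian away from at most one point, the composite $\gamma$ inherits the same, so $\gamma_p^*(f)$ is measurable whenever $f$ is, and the null set where $\gamma$ or $J_\gamma$ misbehaves is irrelevant in $L^p$.

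Next, for $p\in[1,+\infty)$ I would compute, for $f\in L^p(\R^n,dx)$,
\[
\|\gamma_p^*(f)\|_p^p=\int_{\R^n}\bigl||J_{\gamma^{-1}}(y)|^{1/p} f(\gamma^{-1}(y))\bigr|^p\,dy=\int_{\R^n}|J_{\gamma^{-1}}(y)|\cdot|f(\gamma^{-1}(y))|^p\,dy,
\]
and then apply the change of variables $x=\gamma^{-1}(y)$, under which $|J_{\gamma^{-1}}(y)|\,dy=dx$, to get $\|\gamma_p^*(f)\|_p^p=\int_{\R^n}|f(x)|^p\,dx=\|f\|_p^p$. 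Linearity of $\gamma_p^*$ is immediate from its pointwise definition. For $p=+\infty$ the factor $|J_{\gamma^{-1}}(y)|^{1/p}$ is interpreted as $1$, so $\gamma_\infty^*(f)(y)=f(\gamma^{-1}(y))$, and $\|\gamma_\infty^*(f)\|_\infty=\|f\|_\infty$ because $\gamma^{-1}$ maps null sets to null sets (again using the locally bounded nonvanishing Jacobian, or just the explicit form of the generators). Surjectivity, hence the isometric isomorphism, follows once the homomorphism property is established, since then $\gamma_p^*$ has inverse $(\gamma^{-1})_p^*$.

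To get the representation property $\pi_p(\gamma_1\gamma_2)=\pi_p(\gamma_1)\pi_p(\gamma_2)$, I would unwind both sides on a test function $f$. On one side $(\gamma_1\gamma_2)_p^*(f)(y)=|J_{(\gamma_1\gamma_2)^{-1}}(y)|^{1/p} f((\gamma_1\gamma_2)^{-1}(y))$ with $(\gamma_1\gamma_2)^{-1}=\gamma_2^{-1}\gamma_1^{-1}$; on the other side $\gamma_{1,p}^*(\gamma_{2,p}^*f)(y)=|J_{\gamma_1^{-1}}(y)|^{1/p}\,(\gamma_{2,p}^*f)(\gamma_1^{-1}(y))=|J_{\gamma_1^{-1}}(y)|^{1/p}\,|J_{\gamma_2^{-1}}(\gamma_1^{-1}(y))|^{1/p}\,f(\gamma_2^{-1}(\gamma_1^{-1}(y)))$. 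The two agree provided $J_{\gamma_2^{-1}\gamma_1^{-1}}(y)=J_{\gamma_1^{-1}}(y)\cdot J_{\gamma_2^{-1}}(\gamma_1^{-1}(y))$, which is exactly the chain rule $\det$-multiplicativity for $a.e.$-defined Sobolev diffeomorphisms. The only genuinely delicate point is this last chain-rule identity holding $dy$-a.e.\ despite $\gamma_1,\gamma_2$ being only a.e.-defined: the main obstacle is bookkeeping the exceptional null sets, but it is handled by noting each $\gamma\in G(\R^n)$ is a genuine smooth diffeomorphism between the complements of finite sets, and smooth diffeomorphisms map null sets to null sets, so the composite exceptional set stays null and the classical chain rule applies pointwise on its complement.
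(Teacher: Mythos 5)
Your proof is correct. The paper states this lemma without proof, introducing it with ``it is well known and easy to check''; your verification --- the change-of-variables formula for the isometry, the interpretation $|J_{\gamma^{-1}}|^{1/\infty}=1$ for the $p=\infty$ case, and the Jacobian chain rule $J_{\gamma_2^{-1}\gamma_1^{-1}}(y)=J_{\gamma_1^{-1}}(y)\cdot J_{\gamma_2^{-1}}(\gamma_1^{-1}(y))$ for the homomorphism property, with the exceptional null sets controlled because each $\gamma\in G(\R^n)$ is a smooth diffeomorphism off a finite set --- is precisely the standard argument the authors are alluding to.
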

\noindent
In case $p=\infty$ we have $\gamma_\infty^*(f)=f\circ\gamma^{-1}$ for $f\in L^\infty(\R^n,dx)$ and $\gamma\in G(\R^n)$.
\vskip0.2truecm\noindent
M\"obius transformations of $\R^n$ are $dx$-a.e. defined, measurable maps on the space $\R^n$, endowed with the Lebesgue measure.  They can be understood, in a natural way, as homeomorphisms of the one-point compactification ${\bar\R^n}:=\R^n\cup\{\infty\}$ so that any $\gamma\in G(\R^n)$ reduces to a conformal diffeomorphism on $\bar\R^n\setminus \{\infty,\gamma^{-1}(\infty)\}$.\\
M\"obius transformations can also be seen as homeomorphisms of the unit sphere $\mathbb{S}^n\subset \R^{n+1}$: the inverse stereographic projection $S:\R^n\to \mathbb{S}^n$ provides an isomorphism $\gamma\mapsto \widetilde\gamma :=S\circ\gamma\circ S^{-1}$ between the M\"obius group $G(\R^n)$ and the group $G(\mathbb{S}^n)$ of conformal diffeomorphisms of $\mathbb{S}^n$  (see [LL Chapter 4.4] for details).\\
The celebrated rigidity theorem of J. Liouville \cite{L1}, \cite{L2} states that, in striking contrast with plane mappings, in dimension $n\ge 3$, the only conformal mappings on domains $D\subseteq\R^n$ are restrictions of M\"obius transformations to $D$. Liouville's proof required the mapping to be at least $C^3$ and since then several generalizations of the result have been provided requiring lesser regularity. In particular we will make use of the Gehring's version \cite{Ge} for 1-quasiconformal mappings and the Reshetnyak's one \cite{Re2} for 1-quasiregular mappings.
\section{M\"obius transformations as automorphims of a multipliers algebra}
All along this section, we suppose $n\geq 3$ and we show that the action $\pi_\infty$ of the M\"obius group $G(\R^n)$ on the algebra $L^\infty(\R^n,dx)$ restricts to an isometric action of $G(\R^n)$ on the multipliers algebra $\M(H^{1,2}_e(\R^n))$ and that the spectrum of the Dirichlet integral $\D$ on spaces $L^2(\R^n,\Gamma[a])$ is the same for all multipliers $a\in\M(H^{1,2}_e(\R^n))$ belonging to the same $\pi_\infty$-orbit of $G(\R^n)$.
\subsection{Green operator conformal covariance}
Denoting by $|\mathbb{S}^{n-1}|$ the measure of the unit sphere $\mathbb{S}^{n-1}\subset\mathbb{R}^n$, recall that the Green function on $\mathbb{R}^n$, for $n\ge 3$, is defined by
\begin{equation}
G(x,y):=c_n|x-y|^{2-n}\qquad x,y\in \mathbb{R}^n\, ,\quad x\neq y
\end{equation}
where $c_n :=[(n-2)|\mathbb{S}^{n-1}|]^{-1}$, in such a way that the restriction $G_y (x):=G(x,y)$ is the fundamental solution of the Poisson equation $-\Delta G_y = \delta_y$, $y\in \mathbb{R}^n$, associated to the Laplacian $\Delta:=\sum_{k=1}^n \partial^2/\partial_k^2$. The heat semigroup $e^{t\Delta}$ is transient for $n\ge 3$ and its resolvent Green operator $G:=\Delta^{-1}$ is an integral operator given by
\[
G(f)(x)=(-\Delta^{-1}f)(x)=\int_{\mathbb{R}^n} G(x,y)f(y)dy = (G_0\ast f)(x)\qquad dx-a.e.\quad x\in\R^n\,
\]
with $G_0(y)=c_n|y|^{2-n}$, $y\in \R^n\setminus\{0\}$.
A first contact between conformal geometry and Laplace operator is revealed by the following
\begin{prop}(Green operator conformal covariance)\label{green}\par\noindent
Setting for $n\ge 3$, $p:=\frac{2n}{n+2}$ and $r:=\frac{2n}{n-2}$ (Sobolev exponent), we have that the representations $\pi_p$ and $\pi_r$ of the conformal group are intertwined by the actions of the Green operator on the spaces $L^p(\R^n,dx)$ and $L^r(\R^n,dx)$
\begin{equation}
G(\gamma_p^* (f))=\gamma^*_r (G(f))\qquad f\in L^p (\mathbb{R}^n,dx)\, ,\quad \gamma\in G(\R^n)\, .
\end{equation}
Here we have implicitly admitted the fact that, by the Hardy-Littlewood-Sobolev inequality (see below), the Green operator $G:L^p (\mathbb{R}^n,dx)\to L^r(\mathbb{R}^n,dx)$ is well defined (and bounded).
\end{prop}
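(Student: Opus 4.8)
I would exploit the group structure. Let $H\subseteq G(\R^n)$ denote the set of $\gamma$ for which $G\circ\gamma_p^*=\gamma_r^*\circ G$. Since $\pi_p$ and $\pi_r$ are group homomorphisms by Lemma \ref{grouprepresentation}, if $\gamma,\delta\in H$ then $G\circ(\gamma\delta)_p^*=G\circ\gamma_p^*\circ\delta_p^*=\gamma_r^*\circ G\circ\delta_p^*=\gamma_r^*\circ\delta_r^*\circ G=(\gamma\delta)_r^*\circ G$, and the same argument shows $H$ is closed under inverses; hence $H$ is a subgroup of $G(\R^n)$. As $G(\R^n)$ is generated by translations, rotations, dilations and the inversion $\gamma_i$, it suffices to verify the intertwining relation on these four generators. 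Moreover, since $G:L^p\to L^r$ is bounded (Hardy--Littlewood--Sobolev) and $\gamma_p^*,\gamma_r^*$ are isometries, it is enough to check the identity on a dense subspace, say $C^\infty_c(\R^n)$, where all the changes of variable below are unambiguous (the convolution integral converges absolutely and Fubini applies).

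\textbf{Euclidean isometries.} For $\gamma=\gamma_y$ or $\gamma=\gamma_R$ one has $|J_{\gamma^{-1}}|\equiv 1$, so $\gamma_p^*(f)=f\circ\gamma^{-1}$ and $\gamma_r^*(g)=g\circ\gamma^{-1}$. Since the kernel $G(x,y)=c_n|x-y|^{2-n}$ is invariant under the simultaneous action of an isometry on both variables, the substitution $y\mapsto\gamma(y)$ in $\int G(x,y)f(\gamma^{-1}(y))\,dy$ immediately gives $G(f)(\gamma^{-1}(x))$, which is what is wanted.

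\textbf{Dilations.} For $\gamma=\gamma_s$ we have $J_{\gamma_s^{-1}}\equiv s^{-n}$, hence $\gamma_{s,p}^*(f)(y)=s^{-n/p}f(y/s)$. Substituting $y=sz$ and using the homogeneity $|x-sz|^{2-n}=s^{2-n}|x/s-z|^{2-n}$ one finds $G(\gamma_{s,p}^*f)(x)=s^{\,2-n/p}\,G(f)(x/s)$, whereas $\gamma_{s,r}^*(G(f))(x)=s^{-n/r}G(f)(x/s)$. These coincide precisely because $2-n/p=-n/r$, i.e. $n(p^{-1}-r^{-1})=2$, which is exactly the numerical relation encoded in the choices $p=\frac{2n}{n+2}$, $r=\frac{2n}{n-2}$.

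\textbf{The inversion -- the main point.} This is the only step requiring genuine computation. The key ingredient is the Kelvin identity $|\gamma_i(x)-\gamma_i(y)|=|x-y|/(|x|\,|y|)$, verified by multiplying out $\bigl|x/|x|^2-y/|y|^2\bigr|^2$. Here $\gamma_i^{-1}=\gamma_i$ and $|J_{\gamma_i}(y)|=|y|^{-2n}$, so $\gamma_{i,p}^*(f)(y)=|y|^{-2n/p}f(\gamma_i(y))$. In the integral $G(\gamma_{i,p}^*f)(x)=\int c_n|x-y|^{2-n}|y|^{-2n/p}f(\gamma_i(y))\,dy$ I would substitute $y=\gamma_i(z)$ (so $dy=|z|^{-2n}dz$, $|y|=|z|^{-1}$, $\gamma_i(y)=z$) and write $x=\gamma_i(w)$ with $w=\gamma_i(x)$; the Kelvin identity then turns $|x-y|^{2-n}$ into $|x|^{2-n}|z|^{\,n-2}|w-z|^{2-n}$. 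Collecting all powers of $|z|$, the total exponent is $(n-2)+2n/p-2n$, which vanishes exactly because $2n/p=n+2$. One is left with $G(\gamma_{i,p}^*f)(x)=|x|^{2-n}(G_0\ast f)(w)=|x|^{2-n}G(f)(\gamma_i(x))$, while $\gamma_{i,r}^*(G(f))(x)=|x|^{-2n/r}G(f)(\gamma_i(x))$, and $2-n=-2n/r$ since $2n/r=n-2$. The delicate part is the bookkeeping of the $|z|$-powers together with the correct application of the Kelvin identity in the form with $x$ replaced by $\gamma_i(w)$; once this is done, combining the four generator computations with the subgroup argument of the first paragraph finishes the proof.
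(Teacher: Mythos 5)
Your proposal is correct and follows essentially the same route as the paper: reduction to the generators of $G(\R^n)$, triviality for Euclidean isometries since the kernel depends only on distance, a homogeneity count for dilations, and the Kelvin identity $|\gamma_i(x)-\gamma_i(y)|=|x-y|/(|x|\,|y|)$ for the inversion, with the exponents closing up precisely because $2n/p=n+2$ and $2n/r=n-2$. Your explicit justification of the reduction to generators (the intertwining set is a subgroup, by the homomorphism property of $\pi_p$ and $\pi_r$) and the density remark are points the paper leaves implicit, but they do not change the argument.
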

\begin{proof}
It is enough to check the identity for the generators of the conformal group. As translations and rotations are isometries, their Jacobian is the unit constant function and their actions preserve the Lebesgue measure. The identity is then consequence of the fact that the Green kernel is a function of the Euclidean distance of $\mathbb{R}^n$.
\par\noindent
In case of scalings, $\gamma =\gamma_s$ for some $s\in\mathbb{R}_+$, we have
\begin{equation}
\begin{split}
G(\gamma_p^* (f))(x)&=c_n\int_{\mathbb{R}^n} |x-y|^{(2-n)}f(\gamma^{-1}(y))J_{\gamma^{-1}}(y)^{1/p}\, dy \\
&=s^{-n/p}c_n\int_{\mathbb{R}^n} |x-y|^{(2-n)}f(\gamma^{-1}(y))\, dy \\
&=s^{n-n/p}c_n\int_{\mathbb{R}^n} |x-\gamma(y')|^{(2-n)}f(y')\, dy' \\
&=s^{n-n/p}c_n\int_{\mathbb{R}^n} |x-sy'|^{(2-n)}f(y')\, dy' \\
&=s^{n-n/p}c_n\int_{\mathbb{R}^n} s^{(2-n)}|s^{-1}x-y'|^{(2-n)}f(y')\, dy' \\
&=s^{2-n/p}c_n\int_{\mathbb{R}^n} |\gamma^{-1}(x)-y'|^{(2-n)}f(y')\, dy' \\
&=s^{-n/r}G(f)(\gamma^{-1}(x)) \\
&=J_{\gamma^{-1}}(x)^{1/r}G(f)(\gamma^{-1}(x)) \\
&=\gamma^*_r (G(f))(x)\, .
\end{split}
\end{equation}
In the case of the inversion $\gamma=\gamma_i$ we use the property $|\,\gamma (x)-\gamma(y)|=\frac{|x-y|}{|x|\cdot|y|}$ to compute
\begin{equation}
\begin{split}
G(\gamma_p^* (f))(x)&=c_n\int_{\mathbb{R}^n} |x-y|^{(2-n)}f(\gamma^{-1}(y))|J_{\gamma^{-1}}(y)|^{1/p}\, dy \\
&=c_n\int_{\mathbb{R}^n} |x-y|^{(2-n)}f(\gamma^{-1}(y))|J_{\gamma}(\gamma^{-1}(y))|^{-1/p}\, dy \\
&=c_n\int_{\mathbb{R}^n} |x-\gamma (y)|^{(2-n)}f(\gamma^{-1}(y))|J_{\gamma}(y')|^{-1/p}|J_{\gamma}(y')|\, dy' \\
&=c_n\int_{\mathbb{R}^n} |\gamma(\gamma^{-1}(x))-\gamma (y)|^{(2-n)}f(y')|J_{\gamma}(y')|^{1-1/p}\, dy' \\
&=c_n\int_{\mathbb{R}^n} |\gamma(\gamma_i^{-1}(x))-\gamma (y)|^{(2-n)}f(y')|J_{\gamma}(y')|^{1-1/p}\, dy' \\
&=c_n\int_{\mathbb{R}^n} |\gamma^{-1}(x)|^{(n-2)}|y'|^{(n-2)}|\gamma^{-1}(x)-y'|^{(2-n)}f(y')|J_{\gamma}(y')|^{1-1/p}\, dy' \\
&=|\gamma^{-1}(x)|^{(n-2)}c_n\int_{\mathbb{R}^n} |y'|^{(n-2)}|J_{\gamma}(y')|^{1-1/p}|\gamma^{-1}(x)-y'|^{(2-n)}f(y')\, dy' \\
&=|\gamma^{-1}(x)|^{(n-2)}c_n\int_{\mathbb{R}^n} |y'|^{(n-2)}(|y'|^{-2n})^{1-1/p}|\gamma^{-1}(x)-y'|^{(2-n)}f(y')\, dy' \\
&=|\gamma^{-1}(x)|^{(n-2)}c_n\int_{\mathbb{R}^n} |y'|^{(n-2-2n(1-1/p))}|\gamma^{-1}(x)-y'|^{(2-n)}f(y')\, dy' \\
&=|x|^{(2-n)}c_n\int_{\mathbb{R}^n} |\gamma^{-1}(x)-y'|^{(2-n)}f(y')\, dy' \\
&=(|x|^{-2n})^{\frac{n-2}{2n}}G(f)(\gamma^{-1}(x)) \\
&=|J_{\gamma^{-1}}(x)|^{1/r}G(f)(\gamma^{-1}(x)) \\
&=\gamma^*_{r}(G(f))(x)\, .
\end{split}
\end{equation}
\end{proof}

\subsection{Conformal invariance of the Hardy-Littlewood-Sobolev inequality}
Consider the functions $g_\lambda (x):=|x|^{-\lambda}$, defined for $x\in\R^n\setminus\{0\}$ and $\lambda\in (0,n)$, and the Riesz potential operators defined by
\[
G_\lambda (f)(x):=(g_\lambda\ast f)(x)=\int_{\mathbb{R}^n}f(y)|x-y|^{-\lambda}\, dy\qquad dx-a.e.\,\, x\in\R^n\, .
\]
For $\lambda =(n-2)$ the Riesz potential is proportional to the Green operator $G=c_n\cdot G_{n-2}$.
\vskip0.1truecm\noindent
By the Hardy-Littlewood-Sobolev inequality  (see \cite{LL} Chapter 4.3)
\begin{equation}
\int_{\mathbb{R}^n} \int_{\mathbb{R}^n}  f(x)|x-y|^{-\lambda}h(y)dxdy\le {\rm const.}\cdot\|f\|_p\cdot\|h\|_q\qquad f\in L^p (\mathbb{R}^n,dx), h\in L^q (\mathbb{R}^n,dx)
\end{equation}
corresponding to parameters $\lambda\in (0,n)$ and $p,q\in(1,+\infty]$ such that $\frac{1}{p}+\frac{\lambda}{n}+\frac{1}{q}=2$,
it follows that the Riesz potentials are bounded operator from $L^q$ to $L^{p'}$ and from $L^p$ to $L^{q'}$, where $p'$, resp. $q'$ denotes the exponent conjugate to $p$, resp. $q$.
\vskip0.2truecm\noindent
In particular, choosing $\lambda =(n-2)$ and $q=\frac{2n}{n+2}$, we have $p=\frac{2n}{n+2}$, $p'=\frac{2n}{n-2}$ and then $G_{n-2}\in\mathcal{B}(L^{\frac{2n}{n+2}},L^{\frac{2n}{n-2}})$, while, choosing $\lambda =(n-1)$ and $q=\frac{2n}{n+2}$, we have $p=p'=2$ and then $G_{n-1}\in\mathcal{B}(L^{\frac{2n}{n+2}},L^2)$. Notice that $r:=\frac{2n}{n-2}$ is the Sobolev exponent.
\vskip0.2truecm\noindent
For future reference, we collect below a well known invariance property of the HLS inequalities as well as some consequences  that we will have occasion to exploit later on.
\begin{thm}(HLS conformal invariance)\label{HLSinvariance}
\item i) The HSL functional corresponding to the values $\lambda\in (0,n)$, $p=q:=\frac{2n}{2n-\lambda}$ of the parameters
\begin{equation}
I(f,h):=\int_{\mathbb{R}^n} \int_{\mathbb{R}^n}  f(x)|x-y|^{-\lambda}g(y)dxdy\qquad f,g\in L^p (\mathbb{R}^n,dx)
\end{equation}
is invariant under the representation $\pi_p$ of the M\"obius group $G(\mathbb{R}^n)$
\begin{equation}
I(\gamma^*_p (f),\gamma^*_p (g))=I(f,g)\qquad f,g\in L^p (\mathbb{R}^n,dx)\, ;
\end{equation}
\item ii) a natural, dense, continuous embedding $H^{1,2}_e(\R^n)\subset L^{\frac{2n}{n-2}}(\R^n,dx)$ holds true;
\item iii) the Riesz potential operator $G_{n-2}$ is a well defined bounded map with dense range
\[
G_{n-2}:L^{\frac{2n}{n+2}}(\R^n,dx)\to H^{1,2}_e(\R^n)\, .
\]
\end{thm}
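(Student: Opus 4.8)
My plan is to treat the three items in the stated order, using Proposition \ref{green} (Green operator conformal covariance) as the central input, together with the HLS inequality already recorded above. For item i), the HLS functional $I(f,h)$ with $p=q=\frac{2n}{2n-\lambda}$ can be rewritten as $I(f,h)=\int_{\R^n} f(x)\cdot G_\lambda(h)(x)\,dx$, i.e. as a bilinear pairing between $L^p$ and $L^{p'}$ intertwined by the Riesz potential $G_\lambda$. Exactly as in the proof of Proposition \ref{green}, the covariance $G_\lambda(\gamma_p^*(h))=\gamma_{p'}^*(G_\lambda(h))$ holds for every generator of $G(\R^n)$: translations and rotations preserve both the Lebesgue measure and the kernel $|x-y|^{-\lambda}$, while for dilations $\gamma_s$ and the inversion $\gamma_i$ one repeats the change-of-variables computations of Proposition \ref{green} verbatim with the exponent $2-n$ replaced by $-\lambda$ and the balancing relation $p=q=\frac{2n}{2n-\lambda}$ ensuring that all powers of $s$ (resp.\ of $|x|,|y|$) cancel. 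Combining this with the fact that $\gamma_p^*$ is an isometry of $L^p$ and that $\langle \gamma_{p'}^*(u),\gamma_p^*(f)\rangle=\langle u,f\rangle$ (duality of the isometric representations $\pi_p,\pi_{p'}$, a direct consequence of Lemma \ref{grouprepresentation}), one gets $I(\gamma_p^*(f),\gamma_p^*(h))=\langle \gamma_p^*(f),G_\lambda(\gamma_p^*(h))\rangle=\langle \gamma_p^*(f),\gamma_{p'}^*(G_\lambda(h))\rangle=\langle f,G_\lambda(h)\rangle=I(f,h)$.

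For item ii), I would start from the classical Sobolev inequality $\|b\|_{\frac{2n}{n-2}}\le C_n\sqrt{\D[b]}$ valid for $b\in C_c^\infty(\R^n)$ (equivalently the $p=p'=2$, $\lambda=n-1$ endpoint of HLS together with the identity $\D[b]=\|\nabla b\|_2^2$, or directly the sharp Sobolev inequality). This says the inclusion $C_c^\infty(\R^n)\hookrightarrow L^{\frac{2n}{n-2}}(\R^n,dx)$ is continuous for the energy norm $\sqrt{\D[\cdot]}$. Since (for $n\ge 3$) $C_c^\infty(\R^n)$ is dense in the extended Dirichlet space $H^{1,2}_e(\R^n)$ with respect to $\sqrt{\D[\cdot]}$ — which is here a genuine complete norm by transience — the inequality extends by continuity to all of $H^{1,2}_e(\R^n)$, and the image of a dense subspace under a bounded map is dense in its closure; since $C_c^\infty(\R^n)$ is already dense in $L^{\frac{2n}{n-2}}(\R^n,dx)$, the embedding $H^{1,2}_e(\R^n)\subset L^{\frac{2n}{n-2}}(\R^n,dx)$ is dense and continuous. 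One subtle point to address is that the abstract extended Dirichlet space elements are only defined up to the pointwise-a.e.\ limit procedure of Section 2.3; here I would note that a $\D$-Cauchy sequence in $C_c^\infty$ is, by the Sobolev inequality, also Cauchy in $L^{\frac{2n}{n-2}}$, so the two limits coincide a.e.\ and the identification $H^{1,2}_e(\R^n)\hookrightarrow L^{\frac{2n}{n-2}}$ is well defined and injective.

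For item iii), I would identify $G_{n-2}$ (up to the constant $c_n$, the Green operator $G$) as the map realizing the Riesz duality $L^{\frac{2n}{n+2}}(\R^n,dx)=\bigl(L^{\frac{2n}{n-2}}(\R^n,dx)\bigr)^*$ composed with the adjoint of the embedding in ii). Concretely: for $f\in L^{\frac{2n}{n+2}}$ and $b\in H^{1,2}_e(\R^n)$, integration by parts gives $\D(b\mid G(f))=\int_{\R^n} b\,f\,dx$, and the right-hand side is controlled by $\|b\|_{\frac{2n}{n-2}}\|f\|_{\frac{2n}{n+2}}\le C_n\sqrt{\D[b]}\,\|f\|_{\frac{2n}{n+2}}$ using item ii); hence the linear functional $b\mapsto\int bf\,dx$ is $\D$-bounded, so by Riesz representation in the Hilbert space $(H^{1,2}_e(\R^n),\D)$ there is a unique $G(f)\in H^{1,2}_e(\R^n)$ with $\sqrt{\D[G(f)]}\le C_n\|f\|_{\frac{2n}{n+2}}$, and this element coincides with the convolution $G_0\ast f$ by the standard computation; boundedness of $G_{n-2}:L^{\frac{2n}{n+2}}\to H^{1,2}_e(\R^n)$ follows. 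Density of the range is then a soft duality argument: if $b\in H^{1,2}_e(\R^n)$ is $\D$-orthogonal to all $G(f)$, then $\int bf\,dx=\D(b\mid G(f))=0$ for all $f\in L^{\frac{2n}{n+2}}$, forcing $b=0$ in $L^{\frac{2n}{n-2}}$ and hence $b=0$ by the injectivity in ii). I expect item ii) — specifically, reconciling the abstract extended-Dirichlet-space definition with the concrete Sobolev embedding and checking that the limits agree a.e.\ — to be the only genuinely delicate point; items i) and iii) are essentially bookkeeping on top of Proposition \ref{green} and the HLS inequality.
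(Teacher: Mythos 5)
Your proposal is correct in substance, but on items i) and iii) it follows a genuinely different route from the paper, so a comparison is in order. For i) the paper simply cites Lieb--Loss Chapter 4.5, whereas you give a self-contained argument by generalizing the generator-by-generator computation of Proposition \ref{green} from $\lambda=n-2$ to arbitrary $\lambda\in(0,n)$; your exponent bookkeeping is right (with $p=\frac{2n}{2n-\lambda}$ one has $n(1/p-1/p')=n-\lambda$ for dilations and $2n(1-1/p)=\lambda$ for the inversion, so all powers cancel), and the duality identity $\langle\gamma_{p'}^*(u),\gamma_p^*(f)\rangle=\langle u,f\rangle$ is an immediate change of variables — this buys a proof that stays entirely inside the paper's own toolkit. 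Item ii) is essentially the paper's argument (Sobolev inequality extended along $\D$-Cauchy sequences, with the a.e./$L^r$ limit identification made explicit, which the paper leaves implicit). For iii) the paper proceeds concretely: the pointwise bound $|\nabla G_{n-2}(f)|\le (n-2)\,G_{n-1}(|f|)$ plus HLS for $G_{n-1}:L^{2n/(n+2)}\to L^2$ gives $\D[G_{n-2}(f)]\le (n-2)^2\|G_{n-1}\|^2_{p\to 2}\|f\|_p^2$, placing $G_{n-2}(f)$ in ${\rm BL}(\R^n)\cap L^r$, and the constant in the decomposition ${\rm BL}=H^{1,2}_e\oplus\C$ is killed by $L^r$-membership; you instead use Riesz representation in $(H^{1,2}_e,\D)$ against the functional $b\mapsto\int bf$, which is softer and has the advantage of yielding the dense-range claim by a clean duality argument (the paper's proof of iii) actually never addresses dense range — it is only established later, in the proof of Theorem \ref{energyinvariance}, by resolvent approximation). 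The one place you should tighten is the phrase ``this element coincides with the convolution $G_0\ast f$ by the standard computation'': to identify the abstract Riesz representative with $G_0\ast f$ you still need to know that $G_0\ast f\in H^{1,2}_e(\R^n)$ for $f\in C^\infty_c(\R^n)$ (so that integration by parts takes place inside the Hilbert space), and that step is exactly the ${\rm BL}\cap L^r$ argument of the paper in miniature; it is not a gap, but it means your route does not fully bypass the paper's computation, only defers it to a dense subspace.
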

\begin{proof}
i) This is a well known property whose proof may found in \cite{LL} Chapter 4.5. To prove the subsequent items set $p:=\frac{2n}{n+2}$, $r:=\frac{2n}{n-2}$.\\ ii) The Sobolev inequality
\[
\|b\|^2_r\le S_n\cdot \D[b]\qquad b\in H^{1,2}(\R^n)\, ,
\]
provides a natural embedding $H^{1,2}(\R^n)\subseteq L^r(\R^n,dx)$. Since functions in $H^{1,2}_e(\R^n)$ are $dx$-a.e. limits of $\sqrt{\D}$-Cauchy sequences in $H^{1,2}(\R^n)$, the Sobolev inequality above holds true for any $b\in H^{1,2}_e(\R^n)$ with the same Sobolev constant $S_n $ and one obtains an embedding $H^{1,2}_e(\R^n)\subset L^r(\R^n,dx)$. The continuity of this embedding follows from the extension of the Sobolev inequality to $H^{1,2}_e(\R^n)$ and the fact that $\sqrt{\D}$ is just the norm of $H^{1,2}_e(\R^n)$. Since the subspace $C^\infty_c(\R^n)\subset H^{1,2}_e(\R^n)$ is dense in $L^r(\R^n,dx)$ the embedding has dense range.\\
iii) We already noticed that for $f\in L^p(\R^n,dx)$ we have $G_{n-2}(f)\in L^r(\R^n,dx)\subset L^2_{\rm loc}(\R^n,dx)$. Taking into account the pointwise bound
\[
|\nabla G_\lambda (f)|=|\nabla g_\lambda\ast  f|\le |\nabla g_\lambda|\ast |f|=\lambda g_{\lambda+1}\ast|f|=\lambda G_{\lambda +1} (|f|)
\]
for the parameter $\lambda =(n-2)$ and using the boundedness of the Riesz potential $G_{n-1}$ from $L^p(\R^n,dx)$ to $L^2(\R^n,dx)$ we have
\[
\D[G_{n-2}(f)]=\int_{\mathbb{R}^n}|\nabla G_{n-2}(f)(x)|^2\, dx\le (n-2)^2 \|G_{n-1}(|f|)\|_2^2\le (n-2)^2\cdot \|G_{n-1}\|^2_{p\to 2} \cdot \|f\|^2_p\, ,
\]
so that $G_{n-2}(f)\in {\rm BL}(\R^n)\cap L^r(\R^n,dx)$. Since however,  ${\rm BL}(\R^n)=\{u+c:u\in H^{1,2}_e(\R^n),\,\, c\in\mathbb{C}\}$, we have $G_{n-2}(f)=u+c$ for some $u\in H^{1,2}_e(\R^n)$ and $c\in\mathbb{C}$. This implies $c=G_{n-2}(f)-u\in L^r(\R^n,dx)$ so that $c=0$ and $G_{n-2}(f)=u\in H^{1,2}_e(\R^n)$. The bound above implies that $\|G_{n-2}\|_{L^p\to H^{1,2}_e}\le (n-2)\cdot \|G_{n-1}\|_{p\to 2}<\infty$.
\end{proof}

\subsection{Conformal invariance of the Dirichlet integral}
The main result of this section is based on the following known invariance of the energy functional with respect to the action of the M\"obius group. For reader's convenience we provide a detailed proof.
\begin{thm}\label{energyinvariance}
The Dirichlet integral of the Euclidean space $\R^n$, $n\ge 3$,
\[
\D[f]=\int_{\mathbb{R}^n} |\nabla f(x)|^2\, dx\qquad f\in H^{1,2}_e(\mathbb{R}^n)
\]
is invariant under the action $\pi_r$ of the conformal group for the Sobolev exponent $r=\frac{2n}{n-2}$
\begin{equation}
\D[\gamma_r^* (f)]=\D[f]\qquad f\in H^{1,2}_e(\mathbb{R}^n)\, ,\quad \gamma\in G(\mathbb{R}^n)\, .
\end{equation}
In particular, one has the embedding $H^{1,2}_e(\R^n)\subset L^r(\R^n,dx)$ and for $\gamma\in G(\R^n)$, the isometry $\gamma_r^*:L^r(\R^n,dx)\to L^r(\R^n, dx)$ restricts to a unitary map $\gamma_r^*:H^{1,2}_e(\R^n)\to H^{1,2}_e(\R^n)$.
\end{thm}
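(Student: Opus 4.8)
The plan is to reduce the asserted invariance to the conformal invariance of the Hardy--Littlewood--Sobolev functional (Theorem \ref{HLSinvariance} i)), using the Green operator as a bridge: the point is that the Dirichlet energy of a Green potential \emph{is} an HLS functional. Concretely, with $p:=\frac{2n}{n+2}$ I will first establish the identity
\[
\D[G(h)]=c_n\,I(h,h)\, ,\qquad h\in L^{p}(\R^n,dx)\, ,
\]
where $I$ is the HLS functional attached to the parameter $\lambda=n-2$, for which $p=\frac{2n}{2n-\lambda}$ is exactly the self-dual exponent. For $h\in C^\infty_c(\R^n)$ this is integration by parts, $\int_{\R^n}|\nabla G(h)|^2\,dx=\int_{\R^n}h\,G(h)\,dx$ (legitimate since for $n\ge 3$ the potential $G(h)\sim|x|^{2-n}$ and $\nabla G(h)\sim|x|^{1-n}$ decay fast enough), combined with the explicit kernel $G(x,y)=c_n|x-y|^{2-n}$. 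Both sides are continuous in $h\in L^{p}$: the left side because $G=c_nG_{n-2}:L^{p}\to H^{1,2}_e(\R^n)$ is bounded with $\D=\|\cdot\|_{H^{1,2}_e(\R^n)}^{2}$ (Theorem \ref{HLSinvariance} iii)), the right side by the HLS inequality; hence the identity extends to all $h\in L^{p}$ by density of $C^\infty_c(\R^n)$ in $L^{p}$.

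\textbf{Invariance on a dense subspace.} Fix $\gamma\in G(\R^n)$. By Lemma \ref{grouprepresentation}, $\gamma^*_p$ is an isometry of $L^{p}(\R^n,dx)$, so $\gamma^*_p h\in L^{p}$, and the Green covariance Proposition \ref{green} gives $\gamma^*_r(G(h))=G(\gamma^*_p h)$; in particular $\gamma^*_r(G(h))\in H^{1,2}_e(\R^n)$ by Theorem \ref{HLSinvariance} iii). Combining the displayed identity with the HLS invariance $I(\gamma^*_p h,\gamma^*_p h)=I(h,h)$ of Theorem \ref{HLSinvariance} i) — applicable precisely because $\lambda=n-2$ corresponds to the self-dual exponent $p$ — yields
\[
\D[\gamma^*_r(G(h))]=\D[G(\gamma^*_p h)]=c_n\,I(\gamma^*_p h,\gamma^*_p h)=c_n\,I(h,h)=\D[G(h)]\qquad h\in L^{p}(\R^n,dx)\, .
\]
Thus $\gamma^*_r$ preserves $\D$ on $G(L^{p})\subseteq H^{1,2}_e(\R^n)$, a dense subspace by Theorem \ref{HLSinvariance} iii).

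\textbf{Passage to the whole space.} The remaining and most delicate step is to transfer the identity from $G(L^{p})$ to all of $H^{1,2}_e(\R^n)$, the difficulty being that a priori $\gamma^*_r$ is only known to be bounded on $L^{r}(\R^n,dx)$, not on $H^{1,2}_e(\R^n)$. Given $f\in H^{1,2}_e(\R^n)$, pick $h_k\in L^{p}$ with $\D[G(h_k)-f]\to 0$. By linearity of $G$ and $\gamma^*_r$ and the case already proved, $\D[\gamma^*_r(G(h_k))-\gamma^*_r(G(h_m))]=\D[\gamma^*_r(G(h_k-h_m))]=\D[G(h_k-h_m)]=\D[G(h_k)-G(h_m)]\to 0$, so $(\gamma^*_r(G(h_k)))_k$ is Cauchy for the \emph{complete} norm $\sqrt{\D}$ on $H^{1,2}_e(\R^n)$ (transience for $n\ge 3$) and converges there to some $g$ with $\D[g]=\lim_k\D[\gamma^*_r(G(h_k))]=\lim_k\D[G(h_k)]=\D[f]$. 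On the other hand the continuous embedding $H^{1,2}_e(\R^n)\subset L^{r}(\R^n,dx)$ of Theorem \ref{HLSinvariance} ii) gives $G(h_k)\to f$ and $\gamma^*_r(G(h_k))\to g$ in $L^{r}$, while $\gamma^*_r(G(h_k))\to\gamma^*_r(f)$ in $L^{r}$ because $\gamma^*_r$ is an $L^{r}$-isometry; hence $g=\gamma^*_r(f)$. Therefore $\gamma^*_r(f)\in H^{1,2}_e(\R^n)$ and $\D[\gamma^*_r(f)]=\D[f]$. Applying this to $\gamma^{-1}$ shows $\gamma^*_r$ is surjective on $H^{1,2}_e(\R^n)$, hence a unitary for the Hilbert structure induced by $\D$, and the stated embedding $H^{1,2}_e(\R^n)\subset L^{r}(\R^n,dx)$ is Theorem \ref{HLSinvariance} ii). The crux is exactly this last transfer: it works because the continuous Sobolev embedding reconciles the two otherwise unrelated topologies ($\sqrt{\D}$ on $H^{1,2}_e(\R^n)$ and the $L^{r}$-topology on which $\gamma^*_r$ is already known to be isometric).
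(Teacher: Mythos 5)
Your proof is correct and follows essentially the same route as the paper: both express $\D[G(h)]$ as the HLS functional $c_n I(h,h)$, combine the Green-operator covariance of Proposition \ref{green} with the HLS conformal invariance of Theorem \ref{HLSinvariance} i), and then use the Sobolev embedding into $L^r$ to identify the $\sqrt{\D}$-limit of $\gamma_r^*(G(h_k))$ with $\gamma_r^*(f)$. The only divergence is the density step: you invoke the dense range of $G:L^{2n/(n+2)}(\R^n,dx)\to H^{1,2}_e(\R^n)$ asserted in Theorem \ref{HLSinvariance} iii) and work once, directly in the complete space $(H^{1,2}_e(\R^n),\sqrt{\D})$, whereas the paper builds the approximating potentials explicitly via the resolvent $(I+\varepsilon L)^{-1}$ and the heat semigroup inside $H^{1,2}(\R^n)$ --- which is in effect where that density claim actually gets substantiated --- so your version is cleaner and handles $H^{1,2}_e$ rather than just $H^{1,2}$, but leans on a density assertion the paper states without independent proof.
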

\begin{proof}
By Theorem \ref{HLSinvariance} ii), we may re-write the HLS functional, corresponding to the parameters $\lambda=(n-2)$, $p=q=\frac{2n}{n+2}$, by the Green operator $G=c_n\cdot G_{n-2}$ as follows
\begin{equation}
\begin{split}
I(f,f):&=\int_{\mathbb{R}^n} \int_{\mathbb{R}^n}  f(x)|x-y|^{2-n}f(y)dxdy \\
&= \frac{1}{c_n}\int_{\mathbb{R}^n}  f(x)(G(f))(y)dx \\
&=-\frac{1}{c_n}\int_{\mathbb{R}^n}  \Delta(G(f))(x)(G(f))(y)dx \\
&=\frac{1}{c_n}\int_{\mathbb{R}^n}  (\nabla (G(f))(x)\cdot (\nabla (G(f))(x)\, dx \\
&=\frac{1}{c_n}\D[G(f)] \qquad f\in L^p (\mathbb{R}^n)\, . \\
\end{split}
\end{equation}
Since $\gamma_p^* (f)\in L^p(\R^n,dx)$, by Theorem \ref{HLSinvariance} i) we have also that $|\nabla G(\gamma_p^* (f))|\in L^2(\R^n,dx)$ and, by the covariance of the Green operator as in Proposition \ref{green}, $G(\gamma_p^* (f))=\gamma_r^* (G(f))$ so that $|\nabla \gamma_r^* (G(f))|\in L^2(\R^n,dx)$. The invariance of the HLS functional, Theorem \ref{HLSinvariance} i). then implies
\begin{equation}
\begin{split}
\D[G(f)] &= c_n I(f,f) \\
&= c_n I(\gamma^*_p (f),\gamma_p^* (f)) \\
&= \D[G(\gamma^*_p (f))] \\
&= \D[\gamma^*_r (G(f))]\qquad f\in L^{\frac{2n}{n+2}} (\R^n,dx)\, . \\
\end{split}
\end{equation}
To conclude we have to show that $\mathcal{C}:=\{G(f):f\in L^{\frac{2n}{n+2}} (\R^n,dx)\}$ is dense in the Dirichlet space $H^{1,2}(\mathbb{R}^n)$.
Set $L=-\Delta$ and fix  $g\in H^{1,2}(\mathbb{R}^n)\cap L^{\frac{2n}{n+2}} (\R^n,dx)$: we have $f_\varepsilon :=L(I+\varepsilon L)^{-1}g =\varepsilon^{-1}[I-(I+\varepsilon L)^{-1}]g\in L^{\frac{2n}{n+2}} (\R^n,dx)$, because $(I+\varepsilon L)^{-1} (L^{\frac{2n}{n+2}} (\R^n,dx)\subseteq L^{\frac{2n}{n+2}}(\R^n,dx)$. Consider $g_\varepsilon :=G(f_\varepsilon)\in\mathcal{C}$: since $g_\varepsilon =(I+\varepsilon L)^{-1}g$ and, by assumption, $g\in H^{1,2}(\mathbb{R}^n)$, we have that $g_\varepsilon\to g$ in $H^1(\mathbb{R}^n)$ so that $\E[g_\varepsilon]\to \E[g]$. By the Sobolev inequality
\[
\|h\|^2_{L^r}\le c\cdot \E[h]\qquad h\in H^{1,2}(\mathbb{R}^n)\, ,
\]
$g_\varepsilon\to g$ in $L^r(\R^n,dx)$ and, by continuity, $\gamma_r^* (g_\varepsilon)\to \gamma_r^* (g)$ in $L^r(\R^n,dx)$ too. Since by (8.16), $\gamma_r^* (g_\varepsilon)\in H^{1,2}(\mathbb{R}^n)$ is a Cauchy sequence converging in $H^{1,2}(\mathbb{R}^n)$, again by the Sobolev inequality
we may identify its limit with $\gamma_r^* (g)$ so that $\E[g]=\E[\gamma_r^* (g)]$ for all $g\in H^{1,2}(\mathbb{R}^n)\cap L^{\frac{2n}{n+2}} (\R^n,dx)$. For $h\in H^{1,2}(\mathbb{R}^n)$ consider $h_t :=e^{-tL}g$. As the heat semigroup is ultracontractive, $h_t\in H^{1,2}(\mathbb{R}^n)\cap L^{\frac{2n}{n+2}} (\R^n,dx)$ for all $t>0$. Reasoning as above we have
\[
\E[h]=\lim_{t\to 0}\E[h_t]=\lim_{t\to 0}\E[\gamma_r^* (h_t)]=\E[\gamma_r^* (h)]\qquad h\in H^{1,2}(\mathbb{R}^n)\, .
\]
\end{proof}

\subsection{Action of the M\"obius group $G(\mathbb{R}^n)$ on the extended multipliers algebra.\,\,}
In this paragraph we prove the main results of this section. The first concerns the isometric action of the M\"obius group on the multipliers algebra of the extended Dirichlet space while the second shows the stability of the spectrum of the Dirichlet integral $\D$ with respect to spaces $L^2(\R^n,\Gamma[a])$ corresponding to multipliers within the same $\pi_\infty$-orbit.
\vskip0.2truecm\noindent
Since the Dirichlet space  $(\D,H^{1,2}(\R^n))$ is transient so that the extended space $H^{1,2}_e(\R^n)$ is a Hilbert space in the energy norm, in addition to the the multipliers algebra $\M(H^{1,2}(\R^n))$ of the Dirichlet space $H^{1,2}(\R^n)$ treated so far, we may consider the multipliers algebra $\M(H^{1,2}_e(\R^n))$ of the extended Dirichlet space $H^{1,2}_e(\R^n)$.
\begin{defn}(Multipliers of the extended Dirichlet space)
A multiplier of the extended Dirichlet space $H^{1,2}_e (\mathbb{R}^n)$, $n\ge 3$, is a measurable function $a$ such that $ab\in H^{1,2}_e (\mathbb{R}^n)$ for all $b\in H^{1,2}_e (\mathbb{R}^n)$. Multipliers form an algebra denoted by $\M(H^{1,2}_e (\mathbb{R}^n))$.
\end{defn}
\begin{prop}
\vskip0.2truecm\noindent
i) Multipliers $a\in \M(H^{1,2}_e (\mathbb{R}^n))$ are bounded operators on the Hilbert space $H^{1,2}_e (\mathbb{R}^n)$ for $n\ge 3$.
\par\noindent
ii) The multipliers algebra of $H^{1,2}_e (\mathbb{R}^n)$ is a subalgebra of the multipliers algebra of $H^{1,2} (\mathbb{R}^n)$
\[
\M(H^{1,2}_e (\mathbb{R}^n))\subseteq \M(H^{1,2} (\mathbb{R}^n))\, .
\]
\end{prop}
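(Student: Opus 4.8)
The plan is to establish the two assertions separately: the first by a Closed Graph argument parallel to the one carried out for $\M(H^{1,2}(U))$ in Section 2, and the second by first proving the uniform bound $\M(H^{1,2}_e(\R^n))\subseteq L^\infty(\R^n,dx)$ and then intersecting with $L^2(\R^n,dx)$. Throughout I set $p:=\frac{2n}{n+2}$ and $r:=\frac{2n}{n-2}$.

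For i), I would fix $a\in\M(H^{1,2}_e(\R^n))$ and consider the everywhere-defined multiplication operator $M_a:H^{1,2}_e(\R^n)\to H^{1,2}_e(\R^n)$, $M_a b:=ab$. For $n\ge 3$ the space $H^{1,2}_e(\R^n)$ is a Hilbert space under $\sqrt{\D[\cdot]}$ by transience, so by the Closed Graph Theorem it suffices to check that $M_a$ is closed. If $\D[b_m-b]\to 0$ and $\D[ab_m-b']\to 0$, then by the continuous embedding $H^{1,2}_e(\R^n)\subset L^r(\R^n,dx)$ of Theorem \ref{HLSinvariance} ii), both $b_m\to b$ and $ab_m\to b'$ in $L^r(\R^n,dx)$; extracting a subsequence along which $b_m\to b$ a.e.\ one gets $ab_m\to ab$ a.e., and a further subsequence gives $ab_m\to b'$ a.e. Hence $b'=ab$, so $M_a$ is closed and therefore bounded.

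For the bound $\M(H^{1,2}_e(\R^n))\subseteq L^\infty(\R^n,dx)$ I would reuse the spectral-radius device already employed for $\M(H^{1,2}(U))$, the only new ingredient being a suitable test function. By Theorem \ref{HLSinvariance} iii), $G_{n-2}$ maps $L^p(\R^n,dx)$ into $H^{1,2}_e(\R^n)$; picking $f\in L^p(\R^n,dx)$ with $f>0$ a.e.\ (for instance $f(x)=(1+|x|)^{-N}$ with $N>\frac{n+2}{2}$), the function $b:=G_{n-2}(f)$ lies in $H^{1,2}_e(\R^n)\subset L^r(\R^n,dx)$ and is $dx$-a.e.\ strictly positive, since the Riesz kernel is positive. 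As $\M(H^{1,2}_e(\R^n))$ is an algebra and, by i), acts by bounded operators with $\|M_{a^m}\|\le\|M_a\|^m$, the embedding $H^{1,2}_e(\R^n)\subset L^r(\R^n,dx)$ yields $C>0$ with $\|a^m b\|_r\le C\sqrt{\D[a^m b]}\le C\|M_a\|^m\sqrt{\D[b]}$, so $\limsup_m\|a^m b\|_r^{1/m}\le\|M_a\|$. Conversely, for every $L<\|a\|_\infty$ the set $\{|a|>L\}$ has positive Lebesgue measure, and bounding $\|a^m b\|_r^r=\int|a|^{mr}|b|^r\,dx$ from below by its integral over $\{|a|>L\}$ gives $\|a^m b\|_r^{1/m}\ge L\bigl(\int_{\{|a|>L\}}|b|^r\,dx\bigr)^{1/(mr)}\to L$; hence $\|a\|_\infty\le\|M_a\|<\infty$.

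Finally, for ii): given $a\in\M(H^{1,2}_e(\R^n))$, the previous step shows $a\in L^\infty(\R^n,dx)$, so for $b\in H^{1,2}(\R^n)=H^{1,2}_e(\R^n)\cap L^2(\R^n,dx)$ we have $ab\in H^{1,2}_e(\R^n)$ because $a$ multiplies the extended space, and $ab\in L^2(\R^n,dx)$ because $\|ab\|_2\le\|a\|_\infty\|b\|_2<\infty$; thus $ab\in H^{1,2}(\R^n)$, i.e.\ $a\in\M(H^{1,2}(\R^n))$. I expect the middle step — producing a $dx$-a.e.\ positive element of $H^{1,2}_e(\R^n)$ and running the two-sided spectral-radius estimate — to be the delicate point, since here, unlike in the $H^{1,2}(U)$ case, there is no ambient $L^2$ structure on the extended space and one must invoke the Sobolev-type embedding $H^{1,2}_e(\R^n)\subset L^r(\R^n,dx)$ explicitly.
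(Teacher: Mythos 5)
Your proposal is correct, and for part i) it is essentially the paper's argument: an everywhere-defined multiplication operator on the Hilbert space $(H^{1,2}_e(\R^n),\sqrt{\D})$ is shown to be closed and hence bounded by the Closed Graph Theorem. The only difference there is how you extract pointwise a.e.\ convergence from $\D$-convergence: the paper cites a general fact about extended Dirichlet spaces ([CF Theorem 2.1.5]), while you route through the continuous embedding $H^{1,2}_e(\R^n)\subset L^r(\R^n,dx)$ and take a.e.\ convergent subsequences in $L^r$; both are sound. The substantive difference is in part ii). The paper's proof consists of one line: since $a\in L^\infty(\R^n,dx)$ and $H^{1,2}(\R^n)=H^{1,2}_e(\R^n)\cap L^2(\R^n,dx)$, one gets $ab\in H^{1,2}_e(\R^n)\cap L^2(\R^n,dx)$. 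The inclusion $\M(H^{1,2}_e(\R^n))\subset L^\infty(\R^n,dx)$ is asserted without proof, and the Section~2 argument establishing $\M(H^{1,2}(U))\subset L^\infty(U,dx)$ cannot be quoted verbatim, since it computes $\|a\|_\infty=\lim_m\|a^m b\|_2^{1/m}$ using the ambient $L^2$ structure, which elements of the extended space need not possess. You correctly identify this as the delicate point and supply the missing ingredient: a $dx$-a.e.\ strictly positive element $b=G_{n-2}(f)\in H^{1,2}_e(\R^n)$ (with $f>0$ in $L^{2n/(n+2)}$, using Theorem \ref{HLSinvariance} iii)) together with the two-sided spectral-radius estimate run in the $L^r$ norm instead of the $L^2$ norm. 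This is a genuine, and in my view necessary, completion of the paper's sketch; the exponents check out ($N>\tfrac{n+2}{2}$ does give $f\in L^{2n/(n+2)}$), the lower bound $\liminf_m\|a^m b\|_r^{1/m}\ge L$ for every $L<\|a\|_\infty$ is valid because $\int_{\{|a|>L\}}|b|^r\,dx>0$, and the conclusion $\|a\|_\infty\le\|M_a\|<\infty$ follows. What your approach buys is a self-contained proof of the $L^\infty$ bound on extended multipliers; what the paper's buys is brevity, at the cost of leaving that bound implicit.
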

\begin{proof}
i) Since $n\ge 3$, the Dirichlet space $(\D, H^{1,2}(\mathbb{R}^n))$ on $L^2(\mathbb{R}^n,dx)$ is transient and the extended Dirichlet space
$H^{1,2}_e (\mathbb{R}^n)$ is a Hilbert space under the norm $\sqrt\D$. Let $b_n,b,b^\prime\in H^{1,2}_e(\mathbb{R}^n)$ such that $\D[b_n-b]\to 0$ and $\D[ab_n-b^\prime]\to 0$ as $n\to +\infty$. By [CF Theorem 2.1.5], it follows that, possibly passing to a suitable subsequence, we have that $b_n\to b$ and $ab_n\to b^\prime$ pointwise $dx$-a.e. in $\mathbb{R}^n$ so that $b^\prime = ab$. Thus, the multiplication operator by the multiplier $a$ is closed and since it is everywhere defined on the Hilbert space $H^{1,2}_e (\mathbb{R}^n)$ is bounded, by the Closed Graph Theorem.
\par\noindent
ii) Consider $a\in \M(H^{1,2}_e (\mathbb{R}^n))$ and $b\in H^{1,2}(\mathbb{R}^n)$. Since $H^{1,2}(\mathbb{R}^n)=H^{1,2}_e (\mathbb{R}^n)\cap L^2(\mathbb{R}^n,dx)$ and $a$ lies in $L^\infty(\mathbb{R}^n,dx)$, we have $ab\in H^{1,2}_e(\mathbb{R}^n)\cap L^2(\mathbb{R}^n,dx)=H^{1,2}(\mathbb{R}^n)$.
\end{proof}

\begin{thm}\label{conformalactiononmultipliers}(Conformal flow of multipliers)\\
The M\"obius group $G(\mathbb{R}^n)$, $n\ge 3$, acts isometrically, by restriction of the representation $\pi_\infty$, on the multiplier algebra $\M(H^{1,2}_e (\mathbb{R}^n))$, i.e. we have
\[
\gamma\in G(\mathbb{R}^n)\, ,\quad a\in\M(H^{1,2}_e(\mathbb{R}^n))\qquad \Rightarrow\qquad \gamma_\infty^*(a)=a\circ\gamma^{-1}\in\M(H^{1,2}_e(\mathbb{R}^n))
\]
and
\[
\|a\circ\gamma^{-1}\|_{\M(H^{1,2}_e(\mathbb{R}^n))}= \|a\|_{\M(H^{1,2}_e(\mathbb{R}^n))}\, .
\]
\end{thm}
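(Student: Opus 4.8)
The plan is to realize the operator of pointwise multiplication by $a\circ\gamma^{-1}$ as the conjugate, by the unitary $\gamma_r^*$ furnished by Theorem \ref{energyinvariance}, of the operator of pointwise multiplication by $a$; once this is done, membership in $\M(H^{1,2}_e(\mathbb{R}^n))$ and the isometry of the action are both immediate from unitarity. For $a\in\M(H^{1,2}_e(\mathbb{R}^n))$ write $M_a$ for the (bounded, by the previous Proposition) multiplication operator on the Hilbert space $H^{1,2}_e(\mathbb{R}^n)$. Recall from Lemma \ref{grouprepresentation} that, for the Sobolev exponent $r=\frac{2n}{n-2}$, one has $\gamma_r^*(f)(y)=|J_{\gamma^{-1}}(y)|^{1/r} f(\gamma^{-1}(y))$, and that, since $\pi_r$ is a representation, $(\gamma_r^*)^{-1}=(\gamma^{-1})_r^*$ acts by $(\gamma_r^*)^{-1}(g)(x)=|J_\gamma(x)|^{1/r} g(\gamma(x))$. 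By Theorem \ref{energyinvariance} the map $\gamma_r^*$ restricts to a unitary operator of $H^{1,2}_e(\mathbb{R}^n)$.

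The key point is the operator identity
\[
\gamma_r^*\circ M_a\circ(\gamma_r^*)^{-1}=M_{a\circ\gamma^{-1}}=M_{\gamma_\infty^*(a)}\,,
\]
which I would check pointwise: for $g\in H^{1,2}_e(\mathbb{R}^n)$,
\[
\bigl(\gamma_r^*\bigl(M_a(\gamma_r^*)^{-1}g\bigr)\bigr)(y)=|J_{\gamma^{-1}}(y)|^{1/r}\,a(\gamma^{-1}(y))\,|J_\gamma(\gamma^{-1}(y))|^{1/r}\,g(y)\,,
\]
and since $J_\gamma(\gamma^{-1}(y))=J_{\gamma^{-1}}(y)^{-1}$ (the chain rule relation recorded in Section 3.1), the two Jacobian factors are reciprocal and cancel, leaving $a(\gamma^{-1}(y))\,g(y)$. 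Conceptually, the weight $|J_{\gamma^{-1}}|^{1/r}$ carried by $\gamma_r^*$ is itself a scalar multiplication operator, hence commutes with $M_a$, and is undone by the inverse weight $|J_\gamma|^{1/r}$ in $(\gamma_r^*)^{-1}$; this is precisely why the $L^\infty$-action $\pi_\infty(\gamma)a=a\circ\gamma^{-1}$ (with no Jacobian weight) is the one appearing on the multiplier side.

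From the identity the theorem follows directly. Given $a\in\M(H^{1,2}_e(\mathbb{R}^n))$ and $g\in H^{1,2}_e(\mathbb{R}^n)$, we have $(\gamma_r^*)^{-1}g\in H^{1,2}_e(\mathbb{R}^n)$, hence $a\cdot(\gamma_r^*)^{-1}g\in H^{1,2}_e(\mathbb{R}^n)$ because $a$ is a multiplier, hence $\gamma_r^*\bigl(a\cdot(\gamma_r^*)^{-1}g\bigr)\in H^{1,2}_e(\mathbb{R}^n)$; the pointwise computation identifies this element with $(a\circ\gamma^{-1})\cdot g$. Since $a\circ\gamma^{-1}$ is measurable and carries all of $H^{1,2}_e(\mathbb{R}^n)$ into itself, $\gamma_\infty^*(a)=a\circ\gamma^{-1}\in\M(H^{1,2}_e(\mathbb{R}^n))$; and $M_{a\circ\gamma^{-1}}=\gamma_r^*\,M_a\,(\gamma_r^*)^{-1}$ with $\gamma_r^*$ unitary yields $\|a\circ\gamma^{-1}\|_{\M(H^{1,2}_e(\mathbb{R}^n))}=\|M_{a\circ\gamma^{-1}}\|=\|M_a\|=\|a\|_{\M(H^{1,2}_e(\mathbb{R}^n))}$. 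That $\gamma\mapsto\gamma_\infty^*$ is an action (group homomorphism) on $\M(H^{1,2}_e(\mathbb{R}^n))$ is inherited from the representation $\pi_\infty$ on $L^\infty(\mathbb{R}^n,dx)$. I do not anticipate a genuine obstacle: the analytic substance of the statement is entirely contained in the conformal invariance of the Dirichlet energy (Theorem \ref{energyinvariance}); the only thing to get right is the bookkeeping of the Jacobian weights, which cancel precisely because multiplication operators are insensitive to multiplicative rescalings of the functions.
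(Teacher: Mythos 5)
Your proof is correct and follows essentially the same route as the paper: the operator identity $\gamma_r^*\,M_a\,(\gamma_r^*)^{-1}=M_{a\circ\gamma^{-1}}$ is exactly the paper's intertwining relation $\gamma_r^*(ab)=(a\circ\gamma^{-1})\cdot\gamma_r^*(b)$, and both arguments then reduce the multiplier property and the norm equality to the unitarity of $\gamma_r^*$ on $H^{1,2}_e(\mathbb{R}^n)$ established in Theorem \ref{energyinvariance}. Your conjugation phrasing makes the two-sided norm equality immediate, whereas the paper obtains one inequality and then swaps $\gamma$ for $\gamma^{-1}$; this is a cosmetic, not substantive, difference.
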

\begin{proof}
Recall that, by Sobolev inequality, $H^{1,2}_e(\mathbb{R}^n)\subset L^r(\mathbb{R}^n,dx)$. For $a\in\M(H^{1,2}_e(\mathbb{R}^n))$ and $b\in H^{1,2}_e(\mathbb{R}^n)$ we have $ab\in H^{1,2}_e(\mathbb{R}^n)\subset L^r(\mathbb{R}^n,dx)$ and then
\[
\gamma_r^* (ab)=J_{\gamma^{-1}}^{1/r}\cdot((ab)\circ\gamma^{-1})=(a\circ\gamma^{-1})\cdot J_{\gamma^{-1}}^{1/r}\cdot(b\circ\gamma^{-1})=(a\circ\gamma^{-1})\cdot\gamma_r^* (b)=\gamma_\infty^*(a)\cdot\gamma_r^* (b)\in L^r(\mathbb{R}^n,dx)\, .
\]
Hence, by Theorem \ref{energyinvariance}, for all $b\in H^{1,2}(\mathbb{R}^n)$ we have
\begin{equation}
\begin{split}
\|(a\circ\gamma^{-1})\gamma_r^* (b)\|_{H^{1,2}_e(\mathbb{R}^n)}^2&=\D[(a\circ\gamma^{-1})\gamma_r^* (b)] \\
&=\D[\gamma_r^* (ab)] \\
&=\D[ab] \\
&=\|ab\|_{H^{1,2}_e(\mathbb{R}^n)}^2 \\
&\le\|a\|_{\M(H^{1,2}_e(\mathbb{R}^n))}^2\cdot \|b\|_{H^{1,2}_e(\mathbb{R}^n)}^2 \\
&\le\|a\|_{\M(H^{1,2}_e(\mathbb{R}^n))}^2\cdot \D[b] \\
&=\|a\|_{\M(H^{1,2}_e(\mathbb{R}^n))}^2\cdot \D[\gamma_r^* (b)] \\
&=\|a\|_{\M(H^{1,2}_e(\mathbb{R}^n))}^2\cdot \|\gamma_r^* (b)\|_{H^{1,2}_e(\mathbb{R}^n)}^2 \, .
\end{split}
\end{equation}
Since, by Theorem \ref{energyinvariance}, $\gamma_r^*$ is a surjective isometry on $H^{1,2}_e(\mathbb{R}^n)$, we have that $a\circ\gamma^{-1}$ is a multiplier with $\|a\circ\gamma^{-1}\|_{\M(H^{1,2}_e(\mathbb{R}^n))}\le
\|a\|_{\M(H^{1,2}_e(\mathbb{R}^n))}$. The conclusion of the proof is attained replacing $a$ with $a\circ\gamma$ and applying the above result to
$\gamma^{-1}$
\[
\|a\|_{H^{1,2}_e(\mathbb{R}^n)}\le \|a\circ\gamma\|_{H^{1,2}_e(\mathbb{R}^n)}=\|a\|_{H^{1,2}_e(\mathbb{R}^n)}\, .
\]
\end{proof}
The second main result of this section is based on the following one by which, for any M\"obius transformation $\gamma\in G(\R^n)$, the map $\gamma_r^*$ establishes a unitary equivalence between the Hilbert spaces $L^2(\R^n,\Gamma[a\circ\gamma])$ and $L^2(\R^n,\Gamma[a])$ for any multiplier $a\in\M(H^{1,2}_e(\R^n))$.
\begin{prop}\label{L2unitary}(Flow of energy measures of multipliers)\\
Let $\gamma\in G(\R^n)$ be a M\"obius transformation of $\R^n$, $n\ge 3$, and let $a\in\M(H^{1,2}_e(\R^n))$ be a multiplier of the extended Dirichlet space $H^{1,2}_e(\R^n)$. Then the map
\[
\gamma_r^*:H^{1,2}_e(\R^n)\to H^{1,2}_e(\R^n)\qquad \gamma_r^*(b):=|J_{\gamma^{-1}}|^{1/r}\cdot (b\circ\gamma^{-1})
\]
obtained in Theorem \ref{energyinvariance}, extends to a unitary map from $L^2(\R^n,\Gamma[a\circ\gamma])$ onto $L^2(\R^n,\Gamma[a])$
\[
(\gamma_r^*(b_1)|\gamma_r^*(b_2))_{L^2(\R^n,\Gamma[a])}=(b_1|b_2)_{L^2(\R^n,\Gamma[a\circ\gamma])} \qquad b_1,b_2\in L^2(\R^n,\Gamma[a\circ\gamma])\, .
\]
In other words, the image by $\gamma$ of the energy measure $\Gamma [a\circ\gamma]$ is the measure $|J_{\gamma^{-1}}|^{2/r} \cdot\Gamma[a]$.
\end{prop}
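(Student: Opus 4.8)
The plan is to reduce the statement to a pointwise change-of-variables identity between the densities of $\Gamma[a\circ\gamma]$ and $\Gamma[a]$, to verify the claimed isometry relation on a convenient dense subspace, and finally to deduce surjectivity from the group law for $\pi_r$. I first record the preliminaries. By Theorem \ref{conformalactiononmultipliers} the function $a\circ\gamma=(\gamma^{-1})^{*}_{\infty}(a)$ is again a multiplier of $H^{1,2}_e(\R^n)$, hence of $H^{1,2}(\R^n)$, and lies in $H^{1,2}_{\rm loc}(\R^n)\cap L^\infty(\R^n,dx)$, so $\Gamma[a\circ\gamma]$ is a positive Radon measure. The Leibniz rule \eqref{leibnitz} applied to $a$ and to $a\circ\gamma$ gives $H^{1,2}(\R^n)\subset L^2(\R^n,\Gamma[a])$ and $H^{1,2}(\R^n)\subset L^2(\R^n,\Gamma[a\circ\gamma])$, and, arguing exactly as in the proof of Proposition \ref{DF} ii), the algebra ${\rm Lip}_c(\R^n)\subset H^{1,2}(\R^n)$ is dense in both $L^2$ spaces. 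It therefore suffices to prove
\[
\int_{\R^n}|b|^2\,d\Gamma[a\circ\gamma]=\int_{\R^n}|\gamma_r^{*}(b)|^2\,d\Gamma[a]\qquad b\in{\rm Lip}_c(\R^n),
\]
the passage to complex-valued $b$ being immediate from $|b|^2=|{\rm Re}\,b|^2+|{\rm Im}\,b|^2$ and linearity of the integral.

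For the density computation, recall (Section 3, Definition \ref{BD}) that every $\gamma\in G(\R^n)$ restricts to a $C^\infty$ conformal diffeomorphism of $\R^n$ off a finite set $F$, with $\gamma'(x)\gamma'(x)^{T}=|J_\gamma(x)|^{2/n}\,I$ for $x\in\R^n\setminus F$. Since $a\circ\gamma\in H^{1,2}_{\rm loc}(\R^n)$, the chain rule for the composition of a Sobolev function with a locally bi-Lipschitz diffeomorphism gives $\nabla(a\circ\gamma)(x)=\gamma'(x)^{T}(\nabla a)(\gamma(x))$ for $dx$-a.e. $x$, and the conformal matrix identity then yields
\[
|\nabla(a\circ\gamma)(x)|^2=|J_\gamma(x)|^{2/n}\,|(\nabla a)(\gamma(x))|^2\qquad dx{\rm -a.e.}\,\, x\in\R^n.
\]
Substituting this into the left-hand side above and changing variables $y=\gamma(x)$ (a diffeomorphism between full-measure open subsets of $\R^n$, so Lebesgue's formula applies), using $|J_\gamma(\gamma^{-1}(y))|=|J_{\gamma^{-1}}(y)|^{-1}$, we obtain
\[
\int_{\R^n}|b|^2\,d\Gamma[a\circ\gamma]=\int_{\R^n}|b(\gamma^{-1}(y))|^2\,|J_{\gamma^{-1}}(y)|^{\,1-2/n}\,|(\nabla a)(y)|^2\,dy.
\]
Because $1-2/n=(n-2)/n=2/r$, the right-hand side equals $\int_{\R^n}\bigl||J_{\gamma^{-1}}|^{1/r}\,(b\circ\gamma^{-1})\bigr|^2\,d\Gamma[a]=\int_{\R^n}|\gamma_r^{*}(b)|^2\,d\Gamma[a]$, which is the desired identity. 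Running the same computation with $|b|^2$ replaced by $f\circ\gamma$ for an arbitrary nonnegative Borel function $f$ gives $\int f\,d(\gamma_{*}\Gamma[a\circ\gamma])=\int f\,|J_{\gamma^{-1}}|^{2/r}\,d\Gamma[a]$, i.e. the final assertion of the statement.

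From the displayed isometry and the density of ${\rm Lip}_c(\R^n)$, the map $b\mapsto\gamma_r^{*}(b)$ extends uniquely to an isometric embedding $\gamma_r^{*}\colon L^2(\R^n,\Gamma[a\circ\gamma])\to L^2(\R^n,\Gamma[a])$, which on $H^{1,2}_e(\R^n)$ coincides, by density and continuity, with the map of Theorem \ref{energyinvariance}. To obtain surjectivity I would apply the identity just proved with $\gamma$ replaced by $\gamma^{-1}$ and $a$ replaced by the multiplier $a\circ\gamma$ (so that $(a\circ\gamma)\circ\gamma^{-1}=a$): this produces an isometric embedding $(\gamma^{-1})_r^{*}\colon L^2(\R^n,\Gamma[a])\to L^2(\R^n,\Gamma[a\circ\gamma])$, and since $\pi_r$ is a representation of $G(\R^n)$ (Lemma \ref{grouprepresentation}, Proposition \ref{green}) one has $(\gamma^{-1})_r^{*}\circ\gamma_r^{*}={\rm id}$ and $\gamma_r^{*}\circ(\gamma^{-1})_r^{*}={\rm id}$, so $\gamma_r^{*}$ is unitary; polarizing the norm identity then gives the stated inner-product formula.

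The only genuinely delicate point is the passage from the smooth conformal structure of $\gamma$ away from its finite singular set $F$ to the a.e.\ chain rule $\nabla(a\circ\gamma)=\gamma'^{\,T}\,((\nabla a)\circ\gamma)$ for the merely locally-$H^{1,2}$ function $a$: one must use that $\gamma$ and $\gamma^{-1}$ send $dx$-null sets to $dx$-null sets and are bi-Lipschitz on compact subsets of $\R^n\setminus F$, so that $(\nabla a)\circ\gamma$ is well defined $dx$-a.e.\ and the composition rule for Sobolev functions applies. As $F$ is finite it is $dx$-null, hence $\Gamma[a]$- and $\Gamma[a\circ\gamma]$-null, so it contributes nothing to any of the integrals; everything else is a routine application of the change-of-variables formula together with the elementary linear algebra of conformal matrices. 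I expect this chain-rule justification to be the main (and essentially only) obstacle.
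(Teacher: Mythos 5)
Your proof is correct and follows essentially the same route as the paper's: the chain rule $\nabla(a\circ\gamma)=\gamma'^{\,T}((\nabla a)\circ\gamma)$, the conformality identity $|\gamma'(x)^{T}v|=|J_\gamma(x)|^{1/n}|v|$, the change of variables $y=\gamma(x)$, and the exponent arithmetic $1-2/n=2/r$ are exactly the paper's computation. The only difference is that you make explicit the density of ${\rm Lip}_c(\R^n)$ and the surjectivity via $(\gamma^{-1})_r^{*}$, points the paper leaves implicit when it passes from the identity on $H^{1,2}_e(\R^n)$ to the full unitary extension by polarization.
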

\begin{proof}
For $b\in H^{1,2}_e(\R^n)$, by Theorem \ref{energyinvariance} above, we have $\gamma_r^*(b)\in H^{1,2}_e(\R^n)$. Since, by Theorem \ref{conformalactiononmultipliers} above, $a\circ\gamma$ is a multiplier of $H^{1,2}_e(\R^n)$, we have $b\in L^2(\R^n,\Gamma[a\circ\gamma])$ and $\gamma_r^*(b)\in L^2(\R^n,\Gamma[a])$. Moreover,
\[
\begin{split}
\|b\|^2_{L^2(\R^n,\Gamma[a\circ\gamma])} &=\int_{\R^n} |b(x)|^2\cdot |\nabla(a\circ\gamma)(x)|^2\cdot dx\\
&=\int_{\R^n} |b(x)|^2\cdot |\gamma^{\prime}(x)^t\cdot (\nabla a(\gamma(x)))|^2\cdot dx\\
&=\int_{\R^n} |b(x)|^2\cdot |J_\gamma (x)|^{2/n}\cdot |\nabla a(\gamma(x))|^2\cdot dx\\
&=\int_{\R^n} |b(x)|^2\cdot |J_\gamma (x)|^{2/n-1}\cdot |\nabla a(\gamma(x))|^2\cdot  |J_\gamma (x)|\cdot dx\\
&=\int_{\R^n} |b(\gamma^{-1}(y))|^2\cdot |J_\gamma (\gamma^{-1}(y))|^{2/n-1}\cdot |\nabla a(y)|^2\cdot  dy\\
&=\int_{\R^n} |b(\gamma^{-1}(y))|^2\cdot |J_{\gamma^{-1}} (y)|^{1-2/n}\cdot |\nabla a(y)|^2\cdot  dy\\
&=\int_{\R^n} |b(\gamma^{-1}(y))|^2\cdot |J_{\gamma^{-1}} (y)|^{2/r}\cdot |\nabla a(y)|^2\cdot  dy\\
&=\int_{\R^n} | |J_{\gamma^{-1}} (y)|^{1/r}\cdot b(\gamma^{-1}(y))|^2\cdot |\nabla a(y)|^2\cdot  dy\\
&=\int_{\R^n} |\gamma_r^*(b)(y)|^2\cdot |\nabla a(y)|^2\cdot  dy\\
&=\|\gamma_r^*(b)\|^2_{L^2({\R^n},\Gamma[a])}\, .
\end{split}
\]
By polarization, we deduce the stated identity for any $b_1,b_2\in L^2({\R^n},\Gamma[a\circ\gamma])$.
\end{proof}
\vskip0.2truecm\noindent
The following are the main results of this section showing that the Dirichlet integral provides unitarily isomorphic, closed quadratic forms with respect to the energy measures of multipliers, belonging to the same orbit of the M\"obius group. For the sake of clarity we first prove a "global" version that will be later  localized on domains with absorbing (or Dirichlet) boundary conditions. Finally we organize the statement of the general version.
\begin{thm}\label{global}
Let $a\in\M(H^{1,2}_e(\R^n))$ be a multiplier with ${\rm supp}(\Gamma[a])=\R^n$. Then setting
\[
\F^a:=H^{1,2}_e(\R^n)\cap L^2(\R^n,\Gamma[a])
\]
we have
\vskip0.1truecm\noindent
\item i) $(\D,\F^a)$ is a regular Dirichlet form on $L^2(\R^n, \Gamma[a])$;
\vskip0.1truecm\noindent
\item ii) for any fixed M\"obius transformation $\gamma\in G(\R^n)$, the Dirichlet forms
\[
(\D,\F^a)\quad{\rm on}\quad L^2(\R^n, \Gamma[a])\qquad\qquad{\rm and}\qquad\qquad(\D, \F^{a\circ\gamma})\quad{\rm on}\quad L^2(\R^n, \Gamma[a\circ\gamma])
\]
are unitarily equivalent, closed quadratic forms. They share, in particular, the same spectrum.
\end{thm}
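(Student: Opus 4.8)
The plan is to obtain both items from results already in hand: item (i) directly from Proposition \ref{DF}, and item (ii) by transporting the form along the isometry $\gamma_r^*$ furnished by Theorem \ref{energyinvariance} and Proposition \ref{L2unitary}.

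For (i): recall $\M(H^{1,2}_e(\R^n))\subseteq\M(H^{1,2}(\R^n))$, so $a$ is a multiplier of the ordinary Dirichlet space with ${\rm supp}(\Gamma[a])=\R^n$, and Proposition \ref{DF} applies verbatim with $U=\R^n$. In particular $(\D,\F^a)$ is a closed Dirichlet form on $L^2(\R^n,\Gamma[a])$ which, by item vi) there, is the form closure of $(\D,H^{1,2}(\R^n))$, and every special standard core of $(\D,H^{1,2}(\R^n))$ on $L^2(\R^n,dx)$ --- e.g. ${\rm Lip}_c(\R^n)$ --- is dense in $\F^a$ for the graph norm of $(\D,\F^a)$. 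Since ${\rm Lip}_c(\R^n)$ is also uniformly dense in $C_0(\R^n)$, it is a form core realising the regularity of $(\D,\F^a)$; hence (i).

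For (ii): first observe that $a\circ\gamma$ is again admissible. By Theorem \ref{conformalactiononmultipliers} applied to $\gamma^{-1}$ we have $a\circ\gamma\in\M(H^{1,2}_e(\R^n))$, and by Proposition \ref{L2unitary} the image of $\Gamma[a\circ\gamma]$ under $\gamma$ equals $|J_{\gamma^{-1}}|^{2/r}\cdot\Gamma[a]$, a measure mutually absolutely continuous with $\Gamma[a]$ because $J_{\gamma^{-1}}\neq 0$ a.e.; hence ${\rm supp}(\Gamma[a\circ\gamma])=\R^n$ and part (i) applies to $a\circ\gamma$ as well. Now take $\gamma_r^*$ as the candidate unitary: by Theorem \ref{energyinvariance} it is a surjective isometry of $H^{1,2}_e(\R^n)$ with $\D[\gamma_r^*(b)]=\D[b]$, and by Proposition \ref{L2unitary} the very same map $b\mapsto |J_{\gamma^{-1}}|^{1/r}\,(b\circ\gamma^{-1})$ is a unitary from $L^2(\R^n,\Gamma[a\circ\gamma])$ onto $L^2(\R^n,\Gamma[a])$. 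Consequently $\gamma_r^*$ maps $\F^{a\circ\gamma}=H^{1,2}_e(\R^n)\cap L^2(\R^n,\Gamma[a\circ\gamma])$ bijectively onto $H^{1,2}_e(\R^n)\cap L^2(\R^n,\Gamma[a])=\F^a$, the inverse being $(\gamma^{-1})_r^*$, which likewise respects both intersecting spaces, and it intertwines the two copies of $\D$. A unitary carrying form domain onto form domain and preserving the quadratic form conjugates the associated self-adjoint operators; thus $(\D,\F^a)$ on $L^2(\R^n,\Gamma[a])$ and $(\D,\F^{a\circ\gamma})$ on $L^2(\R^n,\Gamma[a\circ\gamma])$ are unitarily equivalent closed forms and ${\rm Sp}(\D,\F^a)={\rm Sp}(\D,\F^{a\circ\gamma})$.

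I do not expect a genuine obstacle: the two substantive analytic inputs --- conformal invariance of $\D$ (Theorem \ref{energyinvariance}) and the unitary flow of energy measures (Proposition \ref{L2unitary}) --- are already proved, so what remains is bookkeeping. The only point needing a word of care is that the two avatars of $\gamma_r^*$ (on $H^{1,2}_e(\R^n)$ and on the weighted $L^2$ spaces) agree on the intersection $\F^{a\circ\gamma}$; this is immediate once one fixes quasi-continuous representatives, since $\D$-polar sets are $\Gamma[a]$- and $\Gamma[a\circ\gamma]$-null, so the pointwise formula for $\gamma_r^*$ is unambiguous on $\F^{a\circ\gamma}$.
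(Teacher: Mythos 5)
Your proposal is correct and follows essentially the same route as the paper: part (i) is the speed-measure-change statement (the paper cites \cite[Corollary 5.2.10]{CF} directly, while you reach regularity via Proposition \ref{DF} vi) and the density of ${\rm Lip}_c(\R^n)$, which is the same underlying machinery), and part (ii) combines Theorem \ref{conformalactiononmultipliers}, Theorem \ref{energyinvariance} and Proposition \ref{L2unitary} exactly as the paper does, including the verification that ${\rm supp}(\Gamma[a\circ\gamma])=\R^n$ from the image-measure identity and $J_{\gamma^{-1}}\neq 0$ a.e. No substantive differences.
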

\begin{proof}
i) Since $\Gamma[a]$, as an energy measure of the regular Dirichlet space $(\D,H^{1,2}(\R^n))$ on $L^2(\R^n,dx)$, is a Radon measure charging no $\D$-polar sets (see \cite[Chapter 5]{CF}) and, by assumption, it has full topological support, the quadratic form $(\D,\F^a)$ on $L^2(\R^n, \Gamma[a])$ is just an instance of speed measure change and it is thus a regular Dirichlet form by \cite[Corollary 5.2.10]{CF} and subsequent comments.\\
ii) Since by Theorem \ref{conformalactiononmultipliers}, $a\circ\gamma$ is a multiplier, if we prove that ${\rm supp}(\Gamma[a\circ\gamma])=\R^n$,
then item i) applies so that $(\D, \F^{a\circ\gamma})$ is a regular Dirichlet form on $L^2(\R^n, \Gamma[a\circ\gamma])$. The stated unitary equivalence is then a consequence of Theorem \ref{energyinvariance} and Theorem \ref{L2unitary}.\\
To prove that ${\rm supp}(\Gamma[a\circ\gamma])=\R^n$ notice that $\Gamma[a\circ\gamma]=\gamma^{-1}(|J_{\gamma^{-1}}|^{2/r}\cdot \Gamma[a])$, by Theorem \ref{L2unitary}. Since any M\"obius transformation $\gamma$ is a homeomorphism outside a suitable finite set and $J_{\gamma^{-1}}\neq 0$ $m$-a.e., the property then follows.
\end{proof}
The above result can be localized to the Dirichlet integral with absorbing boundary conditions on Euclidean domains $U\subset \R^n$. Recall that if $\F:=H^{1,2}(\R^n)$ then $\F^a:=\F_e\cap L^2(\R^n,\Gamma[a])=H^{1,2}_e(\R^n)\cap L^2(\R^n,\Gamma[a])$.
\begin{cor}
Let $a\in\M(H^{1,2}_e(\R^n))$ be a multiplier of full support ${\rm supp}(\Gamma[a])=\R^n$ and $U\subseteq\R^n$ a Euclidean domain. Then setting
\[
(\F^a)_U:=\{b\in \F^a: b=0,\,\,\Gamma[a]-{\rm a.e.\,\,on\,\,}U^c\}
\]
we have
\vskip0.1truecm\noindent
\item i) $(\D,(\F^a)_U)$ is a regular Dirichlet form on $L^2(U, \Gamma[a])$;
\vskip0.1truecm\noindent
\item ii) for a M\"obius map $\gamma:U\to\R^n$, setting $V:=\gamma(U)$, the Dirichlet forms
\[
(\D,(\F^a)_U)\quad{\rm on}\quad L^2(U, \Gamma[a])\qquad\qquad{\rm and}\qquad\qquad(\D, (\F^{a\circ\gamma})_V)\quad{\rm on}\quad L^2(V, \Gamma[a\circ\gamma])
\]
are unitarily equivalent, closed quadratic forms. They share, in particular, the same spectrum.
\end{cor}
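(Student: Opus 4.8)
The plan is to derive both assertions from Theorem~\ref{global} by localization: part~(i) by appealing to the general theory of parts of Dirichlet forms recalled in Section~2 (cf.\ \cite{CF}), and part~(ii) by checking that the conformal flow $\gamma_r^*$ of Proposition~\ref{L2unitary} respects the absorbing boundary condition defining $(\F^a)_U$.

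For~(i): by Theorem~\ref{global}~i), $(\D,\F^a)$ is a regular Dirichlet form on $L^2(\R^n,\Gamma[a])$, whose underlying locally compact space is $\R^n$ with the Euclidean topology (here ${\rm supp}(\Gamma[a])=\R^n$ is used), on which $U$ is open. Identifying $L^2(U,\Gamma[a])$ with the closed subspace of $L^2(\R^n,\Gamma[a])$ of functions vanishing $\Gamma[a]$-a.e.\ on $U^c$, one has $(\F^a)_U=H^{1,2}_e(\R^n)\cap L^2(U,\Gamma[a])$, and the crux is that this is precisely the part of $(\D,\F^a)$ on the open set $U$. Granting this, $(\D,(\F^a)_U)$ is a regular Dirichlet form on $L^2(U,\Gamma[a])$ by the general theory of Section~2, with ${\rm Lip}_c(U)$ serving as a special standard core. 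Matching the two descriptions of ``the part on $U$'' --- one through a $\Gamma[a]$-a.e.\ vanishing condition on $U^c$, the other through a quasi-everywhere one --- is where a hypothesis on $U$ enters: if, as elsewhere in the paper, $U$ has continuous boundary in the sense of Maz'ya, then $\Gamma[a]\ll dx$ makes the $\Gamma[a]$-a.e.\ condition the same as the $dx$-a.e.\ one, a function in $H^{1,2}_e(\R^n)$ vanishing $dx$-a.e.\ on $U^c$ restricts to an element of $H^{1,2}_0(U)$, and such elements vanish quasi-everywhere on $U^c$; so the two conditions agree.

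For~(ii): first $a\circ\gamma\in\M(H^{1,2}_e(\R^n))$ by Theorem~\ref{conformalactiononmultipliers}, and ${\rm supp}(\Gamma[a\circ\gamma])=\R^n$ exactly as in the proof of Theorem~\ref{global}~ii) (from $\Gamma[a\circ\gamma]=\gamma^{-1}(|J_{\gamma^{-1}}|^{2/r}\cdot\Gamma[a])$ together with $\gamma$ being an a.e.-defined homeomorphism of $\R^n$ with $J_\gamma\neq 0$ a.e.); since $V=\gamma(U)$ again has continuous boundary, part~(i) applies to $a\circ\gamma$ on $V$ as well. Now take the unitary $\gamma_r^*:L^2(\R^n,\Gamma[a\circ\gamma])\to L^2(\R^n,\Gamma[a])$ of Proposition~\ref{L2unitary}; by Theorem~\ref{energyinvariance} it restricts to a surjective isometry of $H^{1,2}_e(\R^n)$ preserving $\D$, hence it realizes the unitary equivalence of $(\D,\F^{a\circ\gamma})$ on $L^2(\R^n,\Gamma[a\circ\gamma])$ with $(\D,\F^a)$ on $L^2(\R^n,\Gamma[a])$ asserted in Theorem~\ref{global}~ii). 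It remains to see that $\gamma_r^*$ respects the killing: since $\gamma_r^*(b)(y)=|J_{\gamma^{-1}}(y)|^{1/r}\,b(\gamma^{-1}(y))$ and $J_{\gamma^{-1}}\neq 0$ a.e., one has $\{\gamma_r^*(b)\neq 0\}=\gamma(\{b\neq 0\})$ up to a $\Gamma[a]$-null set, and $\gamma$ carries $\Gamma[a\circ\gamma]$-null sets to $\Gamma[a]$-null sets (again $J_{\gamma^{-1}}\neq 0$ a.e., via the push-forward identity of Proposition~\ref{L2unitary}). Thus $\gamma_r^*$ maps the functions supported $\Gamma[a\circ\gamma]$-a.e.\ in a domain $S$ onto the functions supported $\Gamma[a]$-a.e.\ in $\gamma(S)$, and therefore restricts to a unitary from $(\F^{a\circ\gamma})_S$ onto $(\F^a)_{\gamma(S)}$ intertwining $\D$ with $\D$; taking $S=U$ yields $\gamma(S)=V$ and the desired unitary equivalence of the two localized Dirichlet forms (after matching which domain carries which of $\Gamma[a],\Gamma[a\circ\gamma]$). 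In particular the associated self-adjoint generators have equal spectra.

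The hardest point is the identification used in~(i): $(\F^a)_U$ is defined by a $\Gamma[a]$-a.e.\ vanishing condition on $U^c$, whereas the general notion of the part of a Dirichlet form on $U$ is phrased by a quasi-everywhere one, and these genuinely differ for irregular $U$ (e.g.\ a slit domain, whose removed part is Lebesgue-negligible yet of positive capacity), so a boundary-regularity hypothesis on $U$ is needed both to equate them and to retain regularity of $(\D,(\F^a)_U)$ on $L^2(U,\Gamma[a])$. Once this is in place, part~(ii) is essentially bookkeeping with Jacobians and supports; the only care required there is to align the index conventions so that $\gamma$ does map the domain $U$ onto $V=\gamma(U)$.
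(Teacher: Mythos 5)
Your argument follows the paper's own route: part (i) by recognizing $(\D,(\F^a)_U)$ as the part on the open set $U$ of the regular Dirichlet form $(\D,\F^a)$ constructed in Theorem \ref{global} i), and part (ii) by checking that the unitary $\gamma_r^*$ of Proposition \ref{L2unitary} carries $L^2(V,\Gamma[a\circ\gamma])$ onto $L^2(U,\Gamma[a])$ and $(\F^{a\circ\gamma})_V$ onto $(\F^a)_U$ while preserving $\D$; your support bookkeeping in (ii) is a correct expansion of the paper's one-line verification. Where you diverge is on the a.e.-versus-quasi-everywhere issue in (i), and you are right that this is the crux: the part of a Dirichlet form in the sense of \cite{CF} is cut out by a q.e.\ vanishing condition on $U^c$, whereas the corollary's $(\F^a)_U$ is cut out by a $\Gamma[a]$-a.e.\ one, and for a slit-type domain (with $U^c$ Lebesgue-negligible but of positive capacity) the two spaces genuinely differ and the a.e.-defined form need not be regular on $L^2(U,\Gamma[a])$. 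The paper does not engage with this: its proof of (i) simply cites \cite{CF}, and the remark following the corollary only converts the $\Gamma[a]$-a.e.\ condition into a $dx$-a.e.\ one (which itself uses $|\nabla a|\neq 0$ a.e.\ rather than merely ${\rm supp}(\Gamma[a])=\R^n$), never into a q.e.\ one. Your repair --- assuming $U$ has continuous boundary in Maz'ya's sense so that a.e.\ and q.e.\ vanishing on $U^c$ agree for extended Dirichlet functions --- does close the gap, but at the price of a hypothesis absent from the statement; the more economical reading, consistent with the definition of $\F_U$ displayed in Section 2.4, is that the vanishing condition defining $(\F^a)_U$ is meant quasi-everywhere, in which case no regularity of $\partial U$ is needed and both your arguments go through verbatim (your treatment of (ii) works equally well for the q.e.\ condition, since $\gamma_r^*$ respects polar sets as well as $\Gamma[a]$-null sets). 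So: same approach as the paper, with a legitimate caveat about (i) that the paper leaves implicit.
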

\begin{proof}
i) The quadratic form $(\D,(\F^a)_U)$ on $L^2(U, \Gamma[a])$ is just the part of the regular Dirichlet form $(\D,\F^a)$ on $L^2(\R^n, \Gamma[a])$ (according to \cite[(3.3.1) page 108]{CF}) constructed in Theorem \ref{global} i).\\
ii) Since $J_{\gamma^{-1}}\neq 0$ on $U$, by Theorem \ref{L2unitary}, the map $\gamma_r^*:L^2(\R^n,\Gamma[a\circ\gamma])\to L^2(\R^n,\Gamma[a])$ maps unitarely the subspace $L^2(V,\Gamma[a\circ\gamma])$ onto the subspace $L^2(U,\Gamma[a])$ and maps unitarely the subspace $(\F^a)_V\subset \F^a$ onto the subspace $(\F^a)_U\subset \F^a$.
\end{proof}
\begin{rem}
i) Setting $\F:=H^{1,2}(\R^n)$, the part of $\F$ on $U$, defined as $\F_U:=\{b\in \F:b=0,\,\,dx-{\rm a.e.}\,\,{\rm on}\,\, U^c\}$, coincides with $\F_U = H^{1,2}_0(U)$.\\
ii) By \cite[Thm 3.4.9]{CF}, the extended space $(\F_U)_e$ coincides with the part $(\F_e)_U$ of $\F_e$ on $U$
\[
(\F_U)_e=\{b\in \F_e: b=0,\,\,dx-a.e.\,\,{\rm on}\,\, U^c\}
\]
where $\F_e=H^{1,2}_e(\R^n)\simeq{\overset{\circ}{{\rm BL}}(\R^n)}$, by \cite[Theorem 2.2.12]{CF}.\\
iii) The assumption ${\rm supp}(\Gamma[a])=\R^n$ allows to write $\F^a_U=\{b\in \F^a: b=0,\,\,dx-{\rm a.e.\,\,on\,\,}U^c\}$.\\
iv) Starting from the Dirichlet space $\F:=H^{1,2}(\R^n)$, the operations of taking the part on a domain $U$ and changing the speed measure from the Lebesgue one $dx$ to the energy measure $\Gamma[a]$ can be exchanged $(\F^a)_U=(\F_U)^a$ provided ${\rm supp}(\Gamma [a])=\R^n$. In fact
\[
\begin{split}
(\F^a)_U&:=\{b\in\F^a: b=0,\,\,\Gamma[a]-{\rm a.e.\,\,on\,\,}U^c\} \\
&:=\{b\in\F_e\cap L^2(\R^n,\Gamma[a]): b=0,\,\,\Gamma[a]-{\rm a.e.\,\,on\,\,}U^c\} \\
&:=\{b\in\F_e: b=0,\,\,\Gamma[a]-{\rm a.e.\,\,on\,\,}U^c\}\cap L^2(\R^n,\Gamma[a]) \\
&:=\{b\in\F_e: b=0,\,\, dx-{\rm a.e.\,\,on\,\,}U^c\}\cap L^2(\R^n,\Gamma[a]) \\
&:=(\F_e)_U\cap L^2(\R^n,\Gamma[a]) \\
&:=(\F_U)_e\cap L^2(\R^n,\Gamma[a]) \\
&=(\F_U)^a\, .
\end{split}
\]
\end{rem}
We now proceed to localize the above result to deal with cases where the energy measure $\Gamma[a]=|\nabla a|^2\cdot dx$ of a multiplier  $a\in\M(H^{1,2}_e(\R^n))$ fails to have full support $F^*_a:={\rm supp}(\Gamma[a])\neq \R^n$ ($F^*_a$ is the smallest closed set whose complement has vanishing $\Gamma[a]$ measure, hence it is the smallest closed on the complement of which $a$ is constant $dx$-a.e.).\\
To deal with this general situation we have to appeal to the notion of {\it trace of a Dirichlet form}. For reader's convenience, we now briefly recall this construction specialized to the present context and refer to \cite[Chapter 5.2 pages 176-177]{CF} for the general presentation.
\vskip0.2truecm\noindent
Any multiplier $a\in\M(H^{1,2}_e(\R^n))$ belongs to $H^{1,2}_{\rm loc}(\R^n)$ so that $\Gamma[a]:=|\nabla a|^2\cdot dx$ is a Radon measure charging no $\D$-polar sets (see \cite[Definition 2.3.13]{CF} and subsequent comments). This allows to construct the trace $(\check\D, \check\F^a)$ of the Dirichlet integral $(\D,H^{1,2}(\R^n))$ on $L^2(F^*_a,\Gamma[a])$ as follows:
\vskip0.2truecm\noindent
The subspace $H^{1,2}_{e,0}(\R^n\setminus F^*_a):=\{b\in H^{1,2}_e(\R^n): b=0\,\,{\rm q.e.}\,\,{\rm on}\,\,F^*_a\}$ is a closed subspace of the extended Dirichlet space $H^{1,2}_e(\R^n)$ which, together with its orthogonal complement $\mathcal{H}_{F^*_a}$, determines the orthogonal splitting $H^{1,2}_e(\R^n)=H^{1,2}_{e,0}(\R^n\setminus F^*_a)\oplus \mathcal{H}_{F^*_a}$. The orthogonal projection ${\bf H}_{F^*_a}:H^{1,2}_e(\R^n)\to H^{1,2}_e(\R^n)$ onto $\mathcal{H}_{F^*_a}$ assigns to a function $b\in H^{1,2}_e(\R^n)$ the unique function ${\bf H}_{F^*_a}(b)\in H^{1,2}_e(\R^n)$ which coincides $dx$-a.e. with $b$ on the closed set $F^*_a$ and is harmonic on the open complement
\[
\D(h|{\bf H}_{F^*_a}(b))=0\qquad h\in H^{1,2}_{e,0}(\R^n\setminus F^*_a)\, .
\]
The trace $({\check \D},{\check \F^a})$ of $(\D,H^{1,2}(\R^n))$ on $L^2(F^*_a,\Gamma[a])$ is the quadratic form defined as follows
\[
\check{\F^a}:=\{b|_{F^*_a}:b\in H^{1,2}_e(\R^n)\}\cap L^2(F_a,\Gamma[a])\qquad \check{\D}[b|_{F^*_a}]:=\D[{\bf H}_{F^*_a}(b)]\qquad b\in H^{1,2}_e(\R^n)\, .
\]
Since $a$ is a multiplier  $H^{1,2}_e(\R^n)\subset L^2(F^*_a,\Gamma[a])$, the domain of the trace form reduces to
\[
{\check\F^a}=\{b|_{F^*_a}:b\in H^{1,2}_e(\R^n)\}\, .
\]
This shows that  ${\check\F^a}$ depends on $a$ only through the support $F^*_a$ of its energy measure $\Gamma[a]$.
\begin{thm}
Let $a\in\M(H^{1,2}_e(\R^n))$ be a multiplier of the extended Dirichlet space. Then
\vskip0.1truecm\noindent
\item i) the trace Dirichlet integral $(\check \D, \check\F^a)$ is a regular Dirichlet form on $L^2(F^*_a, \Gamma[a])$;
\vskip0.1truecm\noindent
\item ii) for any fixed M\"obius transformation $\gamma\in G(\R^n)$, the trace Dirichlet forms
\[
(\check\D, \check\F^a)  \quad on \quad L^2(F^*_a, \Gamma[a])\qquad and \qquad
(\check\D, \check\F_{a\circ\gamma}) \quad on \quad L^2(F^*_{a\circ\gamma}, \Gamma[a\circ\gamma])
\]
are unitarily equivalent as closed quadratic forms. In particular, they share the same spectrum.
\end{thm}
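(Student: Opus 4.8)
The plan is to deduce i) from the general trace construction for regular Dirichlet forms recalled above, and ii) from the conformal invariance already established (Theorems \ref{energyinvariance}, \ref{conformalactiononmultipliers} and \ref{L2unitary}) by transporting that whole construction through the unitary $\gamma_r^*$.

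For i), I would first record that, since $a\in\M(H^{1,2}_e(\R^n))\subset H^{1,2}_{\rm loc}(\R^n)$, the energy measure $\Gamma[a]=|\nabla a|^2\cdot dx$ is a Radon measure charging no $\D$-polar sets, and that its topological support $F^*_a$ coincides with its quasi-support: indeed $\Gamma[a]\ll dx$, so $\D$-polar sets are $\Gamma[a]$-null, while every relatively open subset of $F^*_a$ carries strictly positive $\Gamma[a]$-mass. Then the general theory of traces of regular Dirichlet forms (\cite[Chapter 5.2]{CF}), applied to the regular Dirichlet form $(\D,H^{1,2}(\R^n))$ on $L^2(\R^n,dx)$ (regular since $n\ge 3$) and to the smooth measure $\Gamma[a]$ with quasi-support $F^*_a$, yields that $(\check\D,\check\F^a)$ is a regular Dirichlet form on $L^2(F^*_a,\Gamma[a])$, with exactly the description of $\check\F^a$ and $\check\D$ recalled before the statement.

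For ii), the first step is to check that $\gamma_r^*$ transports the relevant potential-theoretic notions. Since $\gamma$ is a $C^\infty$ diffeomorphism of $\R^n$ off a finite set $E_\gamma$, each point of $E_\gamma$ is $\D$-polar (singletons are polar for $n\ge 3$), and $\gamma$ is locally bi-Lipschitz off $E_\gamma$, polarity being a local property stable under bi-Lipschitz maps shows that both $\gamma$ and $\gamma^{-1}$ map $\D$-polar sets onto $\D$-polar sets; hence $\gamma_r^*$ sends quasi-continuous functions to quasi-continuous functions and the condition ``$b=0$ q.e.\ on $F$'' to ``$\gamma_r^*(b)=0$ q.e.\ on $\gamma(F)$''. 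The second step is to identify the supports: by Theorem \ref{L2unitary} the $\gamma$-image of $\Gamma[a\circ\gamma]$ is $|J_{\gamma^{-1}}|^{2/r}\cdot\Gamma[a]$, and since $|J_{\gamma^{-1}}|>0$ a.e.\ this positive smooth density does not move the support, so $F^*_{a\circ\gamma}=\gamma^{-1}(F^*_a)$ up to the $\Gamma[a]$-null set $E_\gamma$. The third step combines the two: since $\gamma_r^*$ is a surjective $\D$-isometry of $H^{1,2}_e(\R^n)$ by Theorem \ref{energyinvariance}, it maps $H^{1,2}_{e,0}(\R^n\setminus F^*_{a\circ\gamma})$ onto $H^{1,2}_{e,0}(\R^n\setminus F^*_a)$, hence — passing to orthogonal complements in $(H^{1,2}_e(\R^n),\sqrt\D)$ — maps $\mathcal{H}_{F^*_{a\circ\gamma}}$ onto $\mathcal{H}_{F^*_a}$, so that the harmonic projections intertwine:
\[
\gamma_r^*\circ{\bf H}_{F^*_{a\circ\gamma}}={\bf H}_{F^*_a}\circ\gamma_r^*\, .
\]

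Finally I would assemble everything. By Theorem \ref{L2unitary}, $\gamma_r^*$ is a unitary from $L^2(F^*_{a\circ\gamma},\Gamma[a\circ\gamma])$ onto $L^2(F^*_a,\Gamma[a])$; since $a$ and $a\circ\gamma$ are multipliers one has $\check\F^{a\circ\gamma}=\{b|_{F^*_{a\circ\gamma}}:b\in H^{1,2}_e(\R^n)\}$ and $\check\F^{a}=\{b|_{F^*_a}:b\in H^{1,2}_e(\R^n)\}$, so the bijectivity of $\gamma_r^*$ on $H^{1,2}_e(\R^n)$ together with the fact that it preserves the ``q.e.'' relation shows it maps $\check\F^{a\circ\gamma}$ bijectively onto $\check\F^a$, and for $b\in H^{1,2}_e(\R^n)$
\[
\check\D[(\gamma_r^* b)|_{F^*_a}]=\D[{\bf H}_{F^*_a}(\gamma_r^* b)]=\D[\gamma_r^*\,{\bf H}_{F^*_{a\circ\gamma}}(b)]=\D[{\bf H}_{F^*_{a\circ\gamma}}(b)]=\check\D[b|_{F^*_{a\circ\gamma}}]\, ,
\]
by the intertwining relation and the $\D$-invariance of $\gamma_r^*$. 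Thus $\gamma_r^*$ is a unitary equivalence of the closed quadratic forms, which in particular share the same spectrum. The step I expect to be the main obstacle is the verification in the first step above — that Möbius maps preserve $\D$-polarity and hence that $\gamma_r^*$ transports the ``q.e.\ on $F^*$'' condition; this is the only place where one genuinely uses that $\gamma$ is a conformal homeomorphism (off a finite set) rather than a mere Hilbert-space unitary, and it is precisely what forces the harmonic projections ${\bf H}_{F^*}$ to intertwine.
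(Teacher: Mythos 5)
Your proof is correct and shares the paper's overall architecture: part i) by citing the general trace construction, and part ii) by showing that $\gamma_r^*$ carries $H^{1,2}_{e,0}(\R^n\setminus F^*_{a\circ\gamma})$ onto $H^{1,2}_{e,0}(\R^n\setminus F^*_{a})$, deducing the intertwining $\gamma_r^*\circ{\bf H}_{F^*_{a\circ\gamma}}={\bf H}_{F^*_a}\circ\gamma_r^*$ by passing to orthogonal complements in $(H^{1,2}_e(\R^n),\sqrt{\D})$, and combining this with the $L^2$-unitarity of Proposition \ref{L2unitary}. Where you genuinely diverge is in the one delicate step you correctly single out, namely transporting the condition ``$b=0$ on $F^*$'' through $\gamma_r^*$. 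You do this by proving that M\"obius maps preserve $\D$-polar sets (local bi-Lipschitzness off the finite exceptional set, plus polarity of singletons for $n\ge 3$), so that quasi-continuity and ``q.e.'' statements are transported directly. The paper instead sidesteps capacity considerations entirely: it first uses regularity of the Dirichlet form to replace ``$b=0$ q.e.\ on $F^*$'' by ``$b=0$ $dx$-a.e.\ on $F^*$'', and then conjugates by the stereographic projection $S$ so that $\widetilde\gamma=S\circ\gamma\circ S^{-1}$ becomes a genuine homeomorphism of $\mathbb{S}^n$, reducing the identification ${\rm supp}(\widetilde\nu)=\widetilde\gamma({\rm supp}(\widetilde\mu))$ to the elementary fact that homeomorphisms map supports of measures to supports of their images. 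Your route buys a statement that is intrinsic to the potential theory (quasi-invariance of capacity under the M\"obius group), at the cost of invoking capacity estimates under bi-Lipschitz charts; the paper's route is softer and stays at the level of measures and a.e.\ statements, at the cost of the detour through $\mathbb{S}^n$. Both handle the exceptional points $\gamma^{\pm1}(\infty)$ adequately (in your case because finite sets are polar, in the paper's because they disappear on the sphere), so the two arguments are equally valid; just be slightly more careful when you write $F^*_{a\circ\gamma}=\gamma^{-1}(F^*_a)$ ``up to the null set $E_\gamma$'': the two closed sets can differ by the finitely many exceptional points themselves, and it is their polarity, not merely their $\Gamma[a]$-negligibility, that makes this harmless for the q.e.\ conditions.
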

\begin{proof}
i)  This follows by a direct application of \cite[Theorem 5.2.13]{CF} to the present setting.
ii) Since, by Theorem \ref{conformalactiononmultipliers}, the function $a\circ\gamma$ is a multiplier too, applying the result in i) we get that $(\check\D, \check\F_{a\circ\gamma})$  is a regular Dirichlet form on $L^2(F^*_{a\circ\gamma}, \Gamma[a\circ\gamma])$.\\
To compare the two forms we start showing that the unitary map $\gamma_r^* :H^{1,2}_e(\R^n)\to H^{1,2}_e(\R^n)$ restricts to a unitary map from the subspace $H^{1,2}_{e,0}(\R^n\setminus F^*_{a\circ\gamma})$ onto the subspace $H^{1,2}_{e,0}(\R^n\setminus F^*_{a})$.\\
Notice first that, by the regularity of the Dirichlet form, functions in an extended Dirichlet space
coincide $dx$-a.e. if and only if they coincide q.e. and then
$H^{1,2}_{e,0}(\R^n\setminus F^*_a):=\{b\in H^{1,2}_e(\R^n): b=0,\,\,\,dx-{\rm a.e.}\,\,{\rm on}\,\,F^*_a\}$ as well as $H^{1,2}_{e,0}(\R^n\setminus F^*_{a\circ\gamma}):=\{b\in H^{1,2}_e(\R^n): b=0,\,\,\,dx-{\rm a.e.}\,\,{\rm on}\,\,F^*_{a\circ\gamma}\}$.\\
To circumvent the difficulty that a general a M\"obius transformation $\gamma\in G(\R^n))$ is not an homeomorphism of $\R^n$, we use the stereographic projection $S:\R^n\to \mathbb{S}^n$ to transfer support considerations on the unit sphere $\mathbb{S}^n$ with the advantage that there the transformations $\widetilde\gamma:=S\circ\gamma\circ S^{-1}$ are homeomorphisms (in fact diffeomorphisms) belonging to the conformal group $G(\mathbb{S}^n)$.\\
Setting $\nu:=\Gamma[a]$, $\mu:=\Gamma[a\circ\gamma]$, $h:=|J_{\gamma^{-1}}|^{2/r}$ and $\widetilde\nu:=S(\nu)$, $\widetilde\mu:=S(\mu)$, $\widetilde h:=h\circ S^{-1}$, by Proposition \ref{L2unitary}, we have the relations $\gamma(\mu)=h\cdot\nu$ and also $\widetilde\gamma(\widetilde\mu)=\widetilde h\cdot\widetilde\nu$. Since $h\neq 0$ and $\widetilde h\neq 0$ a.e. on $\R^n$ and $\mathbb{S}^n$, respectively and since $\widetilde\gamma$ is an homeomorphism, we have
\[
{\rm supp}(\widetilde\nu)={\rm supp}(\widetilde h\cdot\widetilde\nu)={\rm supp}(\widetilde\gamma(\widetilde\mu))=\widetilde\gamma({\rm supp}(\widetilde\mu))\, .
\]
For $b\in H^{1,2}_e(\R^n)$, setting $\widetilde b:=b\circ S^{-1}$, the defining relation $\gamma_r^*(b):=|J_{\gamma^{-1}}|^{1/r}\cdot b$ is transformed into $\widetilde{\gamma_r^*(b)}=|J_{\gamma^{-1}}\circ S^{-1}|^{1/r}\cdot (\widetilde b\circ \widetilde\gamma^{-1})$. Hence, $\widetilde b=0$ a.e. on ${\rm supp}(\widetilde\mu)$ if and only if $\widetilde{\gamma_r^*(b)}=0$ a.e. on $\widetilde\gamma({\rm supp}(\widetilde\mu))={\rm supp}(\widetilde\nu)$. On the other hand, $\widetilde b=0$ a.e. on ${\rm supp}(\widetilde\mu)$ if and only if $b=0$ a.e. on $S^{-1}({\rm supp}(\widetilde\mu))=S^{-1}({\rm supp}(S(\mu)))={\rm supp}(\mu)$ and, similarly,
$\widetilde{\gamma_r^*(b)}=0$ a.e. on ${\rm supp}(\widetilde\nu)$ if and only if $\gamma_r^*(b)=0$ a.e. on ${\rm supp}(\nu)$. This proves that $\gamma_r^*$ maps unitarily the subspace $H^{1,2}_{e,0}(\R^n\setminus F^*_{a\circ\gamma})$ onto the subspace $H^{1,2}_{e,0}(\R^n\setminus F^*_{a})$, as required, and also that the same is true for their orthogonal complements $\mathcal{H}_{F^*_{a\circ\gamma}}$ and $\mathcal{H}_{F^*_a}$. This is equivalent to say that the unitary map $\gamma_r^*$ on the Hilbert space $H^{1,2}_e(\R^n)$ intertwines the orthogonal projections onto the subspaces $\mathcal{H}_{F^*_{a\circ\gamma}}$ and $\mathcal{H}_{F^*_a}$
\[
\gamma_r^*({\bf H}_{F^*_{a\circ\gamma}}(b))={\bf H}_{F^*_a}(\gamma_r^*(b))\qquad b\in H^{1,2}_e(\R^n)
\]
so that
\[
{\check \D}[\gamma_r^*(b)]=\D[{\bf H}_{F^*_a}(\gamma_r^*(b))]=\D[\gamma_r^*({\bf H}_{F^*_{a\circ\gamma}}(b))]=\D[{\bf H}_{F^*_{a\circ\gamma}}(b)]={\check \D}[b]\qquad b\in {\check\F}_{a\circ\gamma}\, .
\]
Since $\gamma_r^*$ is unitary between $L^2(F^*_{a\circ\gamma},\Gamma[a\circ\gamma])$ and $L^2(F^*_a,\Gamma[a])$, the proof is completed.
\end{proof}

\section{Conformal volume and fundamental tone of multipliers}
To detect the distortion of maps on $\mathbb{R}^n$, we use the sharp upper bounds on the {\it fundamental tone or first nonzero eigenvalue} of a Riemannian manifold $(M,g_M)$ with Neumann conditions on the boundary $\partial M$, in case $\partial M$ is not empty, due to Li-Yau [LY] for surfaces, El Soufi-Ilias [EI] and Colbois-El Soufi-Savo [CES] for more general weighted manifolds.
\par\noindent
These bounds involve the notion of {\it conformal volume} of a compact Riemannian manifold, a notion introduced in [LY] in their approach to the Willmore conjecture, which we now recall. Since now we will deal with Euclidean spaces with dimension $n\ge 2$.
\vskip0.2truecm\noindent
Let $(M,g_M)$ be a $m$-dimensional compact Riemannian manifold (possibly with boundary) and denote by $V(M,g_M)$ its volume and by $[g_M]$ the conformal class of the metric. By a well known theorem of J. Nash, it can be embedded isometrically into a Euclidean space $\mathbb{R}^n$ for a suitable $n\ge 1$ and then, by stereographic projection, there exists a conformal embedding of $(M,g_M)$ into the unit sphere $\mathbb{S}^n\subset \mathbb{R}^{n+1}$, endowed with its canonical metric $g_{\mathbb{S}^n}$. For any $n\ge 1$, denote by ${\rm Conf\,}((M,g_M),(\mathbb{S}^n,g_{\mathbb{S}^n}))$ the space of all conformal smooth transformations from $(M,g_M)$ to $(\mathbb{S}^n,g_{\mathbb{S}^n})$.
\par\noindent
For any conformal embedding $\phi\in {\rm Conf\,}((M,g_M),(\mathbb{S}^n,g_{\mathbb{S}^n}))$, consider the volume $V(M,\phi^*g_{\mathbb{S}^n})$ of $M$ with respect to the volume form associated to the pull-back of the canonical metric of the sphere. Later, consider the conformal group
$G(\mathbb{S}^n)={\rm Conf\,}(\mathbb{S}^n,g_{\mathbb{S}^n})$ of the sphere and set
\[
V_c^n (M,\phi) :=\sup \{V(M,(\psi\circ\phi)^*g_{\mathbb{S}^n}): \psi\in {\rm Conf\,}(\mathbb{S}^n,g_{\mathbb{S}^n})   \}\qquad n\ge 1\, .
\]
Then define the (decreasing) sequence of the {\it n-conformal volumes} by
\[
V_c^n(M,[g_M]):=\inf \{V_c^n (M,\phi):\phi\in {\rm Conf\,}((M,g_M),(\mathbb{S}^n,g_{\mathbb{S}^n}))\}\qquad n\ge 1
\]
and the {\it conformal volume} of $(M,g_M)$ by
\[
V_c(M,[g_M]):=\inf \{V_c^n(M,[g_M]):n\ge 1\}\, .
\]
The notation $V_c(M,[g_M])$ is aimed to remind that the conformal volume, by definition, depends on the conformal class of the metric only. Among its basic properties we recall the following:
\begin{enumerate}
\item ({\it conformal invariance}) conformally equivalent manifolds $(M,[g_M])\simeq^c (N,[g_N])$ share the same conformal volume as follows directly from its definition;
\item ({\it normalization}) $V_c((\mathbb{S}^n,[g_{\mathbb{S}^n})])=V(\mathbb{S}^n,g_{\mathbb{S}^n})$: see [LY Fact 2 page 272];
\item ({\it lowest bound property}) if $M$ is $n$-dimensional then, again by [LY Fact 2 page 272],
\[
V(\mathbb{S}^n ,g_{\mathbb{S}^n}) \le V_c (M,[g_M])\, ;
\]
\item ({\it monotonicity}) if $N\subseteq M$ is a subdomain then, as observed in [LY Fact 5 page 273],
\[
V_c(N,[g_M|_N])\le V_c(M,[g_M])\, .
\]
\end{enumerate}
Particularly important for us will be the property
\begin{lem}\label{conformallemma}
If $M\subset \mathbb{R}^n$ is a relatively compact subdomain, then
\[
V_c(M,[g_{\mathbb{R}^n}])= V(\mathbb{S}^n,g_{\mathbb{S}^n}).
\]
\end{lem}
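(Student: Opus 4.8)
The plan is to establish the two inequalities $V(\mathbb{S}^n,g_{\mathbb{S}^n})\le V_c(M,[g_{\mathbb{R}^n}])$ and $V_c(M,[g_{\mathbb{R}^n}])\le V(\mathbb{S}^n,g_{\mathbb{S}^n})$ separately, using only the four basic properties of the conformal volume recalled above (conformal invariance, normalization, lowest bound, monotonicity). For the lower bound, observe that $M$, being an open subdomain of $\mathbb{R}^n$, is an $n$-dimensional Riemannian manifold with the restricted metric $g_{\mathbb{R}^n}|_M$; the lowest bound property then applies verbatim and yields $V(\mathbb{S}^n,g_{\mathbb{S}^n})\le V_c(M,[g_{\mathbb{R}^n}])$. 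No use of relative compactness is needed for this half.

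For the upper bound I would bring $M$ onto the sphere by the inverse stereographic projection $S:\mathbb{R}^n\to\mathbb{S}^n$, which is a conformal diffeomorphism of $\mathbb{R}^n$ onto $\mathbb{S}^n$ minus one point. Since $M$ is relatively compact in $\mathbb{R}^n$, its closure $\overline{M}$ is compact, so $S(\overline{M})$ is a compact subset of $\mathbb{S}^n\setminus\{\infty\}$; consequently $S(M)$ is a (relatively compact) subdomain of $\mathbb{S}^n$ and $S$ restricts to a conformal equivalence between $(M,g_{\mathbb{R}^n}|_M)$ and $(S(M),g_{\mathbb{S}^n}|_{S(M)})$. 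Conformal invariance of the conformal volume then gives $V_c(M,[g_{\mathbb{R}^n}])=V_c\bigl(S(M),[g_{\mathbb{S}^n}|_{S(M)}]\bigr)$, and applying the monotonicity property to the subdomain $S(M)\subseteq\mathbb{S}^n$ together with the normalization property yields
\[
V_c\bigl(S(M),[g_{\mathbb{S}^n}|_{S(M)}]\bigr)\le V_c(\mathbb{S}^n,[g_{\mathbb{S}^n}])=V(\mathbb{S}^n,g_{\mathbb{S}^n})\, .
\]
Chaining these relations gives the desired upper bound.

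Combining the two inequalities proves $V_c(M,[g_{\mathbb{R}^n}])=V(\mathbb{S}^n,g_{\mathbb{S}^n})$. There is no genuinely hard step here; the only point requiring a moment of care is that $S(M)$ is an honest subdomain of $\mathbb{S}^n$ to which the monotonicity property applies, and this is precisely where the relative compactness hypothesis on $M$ is used, namely to guarantee that the stereographic image of $M$ stays away from the point at infinity. Equivalently, one may phrase the whole argument intrinsically by noting that $[g_{\mathbb{R}^n}|_M]$ coincides, via $S$, with the conformal structure induced on $M$ by the standard conformal embedding $M\hookrightarrow\mathbb{S}^n$.
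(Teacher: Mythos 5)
Your proof is correct, and it draws on exactly the same toolbox as the paper (the four listed properties of the conformal volume plus the stereographic projection), but the upper bound is organized differently. The paper sandwiches $M$ between two Euclidean balls, observes that all balls are conformally equivalent and hence share the conformal volume $V_c(\mathbb{B}^n,[g_{\mathbb{B}^n}])$, deduces $V_c(M,[g_{\mathbb{R}^n}])\le V_c(\mathbb{B}^n,[g_{\mathbb{B}^n}])$ by monotonicity, and only then bounds $V_c(\mathbb{B}^n,[g_{\mathbb{B}^n}])\le V(\mathbb{S}^n,g_{\mathbb{S}^n})$ by identifying $\mathbb{B}^n$ conformally with a hemisphere and applying monotonicity and normalization once more. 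You instead push $M$ itself onto the sphere via the inverse stereographic projection and apply conformal invariance followed by monotonicity directly to $S(M)\subseteq\mathbb{S}^n$; this eliminates the ball intermediary (and the hemisphere step) and isolates more cleanly where relative compactness enters, namely in keeping $S(\overline{M})$ away from the pole so that $S(M)$ is a genuine subdomain of the sphere. The lower bound is identical in both arguments: a verbatim application of the lowest bound property. Both versions share the same tacit simplification --- the conformal volume is defined for compact manifolds, and the closure of a relatively compact subdomain need not be a manifold with boundary --- but since the paper accepts this as well, your proof is on the same footing and is, if anything, slightly more economical.
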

\begin{proof}
By relative compactness and the fact that it has non empty interior, $M$ lies in between two Euclidean balls which share the same conformal volume $V_c(\mathbb{B}^n,[g_{\mathbb{B}^n}])$, as they are both conformally equivalent to the unit ball $\mathbb{B}^n$. Thus by the lowest bound, monotonicity and normalization properties, we have $V(\mathbb{S}^n,g_{\mathbb{S}^n})\le V_c(M,[g_{\mathbb{R}^n}])\le V_c(\mathbb{B}^n,[g_{\mathbb{B}^n}])$. On the other hand, by stereographic projection, $\mathbb{B}^n$ is conformally equivalent to an hemisphere so that, by monotonicity and normalization, we have $V_c(\mathbb{B}^n,[g_{\mathbb{B}^n}])\le V(\mathbb{S}^n,g_{\mathbb{S}^n})$.
\end{proof}
Let us consider a compact, $n$-dimensional Riemannian manifold $(M,g_M)$ (possibly with non-empty boundary $\partial M$) and its Dirichlet integral on $L^2(M,m_{g_M})$
\[
\D_{g_M}[b]:=\int_M |\nabla b|^2 dm_{g_M}\qquad b\in H^1(M,g_M)
\]
defined on the Sobolev space $H^1(M,g_M)$. A first interplay between conformal volume and spectral properties was discovered by Li-Yau in [LY Theorem 1] for surfaces and later generalized by El Soufi-Ilias in [EI Theorem 2.2] to compact manifolds of arbitrary dimension: if
$\mu_1(M,m_{g_M})>0$ is the first non-zero eigenvalue of the Dirichlet integral above, then
\begin{equation}\label{ESIbound}
\mu_1(M,m_{g_M})\cdot V(M,g_M)^{2/n}\le n\cdot V_c(M,[g_M])^{2/n}\, .
\end{equation}
This result can be compared to a classical one, due to G. Szeg\"o for planar domains and to H.F. Weinberger in greater dimension, by which, among the class of finite volume domains $\O\subset \mathbb{R}^n$ with smooth boundary, balls sharing the same volume of $\Omega$, maximize the first non-zero eigenvalue of the Laplacian: this can be restated as
\[
\mu_1(\O ,dx)\cdot V(\O)^{2/n}\le \mu_1(\mathbb{B}^n,g_{\mathbb{B}^n})\cdot V(\mathbb{B}^n)^{2/n}\, .
\]
The version of these type of results that we will need is a more recent one, due to Colbois-El Soufi-Savo [CES Theorem 3.1], which allows to consider weights both in the Dirichlet integral as well as in the reference measure. For simplicity, we just recall the restricted version we need in which no weight appear in the Dirichlet integral.
\begin{thm}{(Colbois-El Soufi-Savo [CES Theorem 3.1])}\label{CES}
Let $(M,g_M)$ be a compact, $n$-Riemannian manifold (possibly with non-empty boundary $\partial M$) and let $m_{g_M}$ be its Riemannian measure. Fix a nonnegative, non-atomic, finite Radon measure $\nu$ on $M$ charging  no $\D_{g_M}$-polar sets, so that the Dirichlet integral
\begin{equation}\label{Dirichletform}
\D_{g_M}[b]:=\int_M |\nabla b|^2 dm_{g_M}\qquad b\in H^1(M,g_M)\, ,
\end{equation}
defined on the Sobolev space $H^{1,2}(M,g_M)$, is closable on $L^2(M,\nu)$. Then, denoting by
\begin{equation}
\mu_1(M,\nu):=\inf\{ \D_{g_m}[b]: \|b\|_{L^2(M,\nu)}=1\, ,b\in H^{1,2}(M), b\perp 1\}\,
\end{equation}
the fundamental tone of the closure of the Dirichlet integral $(\D_{g_M},H^{1,2}(M))$ on $L^2(M,\nu)$, the following upper bound holds true
\begin{equation}\label{CESbound}
\mu_1(M,\nu)\le n\cdot \Bigl(\frac{V_c(M,[g_M])}{V(M,g_M)}\Bigr)^{2/n}\cdot \frac{V(M,g_M)}{\nu(M)}\, .
\end{equation}
\end{thm}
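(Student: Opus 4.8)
The plan is to run the weighted form of the Li--Yau/Hersch renormalization argument --- the same scheme that produces the El Soufi--Ilias bound \eqref{ESIbound}, but with $m_{g_M}$ replaced by $\nu$ in the barycentre normalization and in the Rayleigh quotient. First I would take a smooth conformal immersion $\phi\colon M\to\mathbb{S}^N\subset\R^{N+1}$ into some standard sphere (these exist by Nash's embedding theorem followed by stereographic projection, as recalled above). Since the fibres of an immersion from a compact manifold are finite and $\nu$ is non-atomic, the push-forward $\phi_*\nu$ is a finite non-atomic measure on $\mathbb{S}^N$; by the Hersch-type balancing lemma there is a conformal automorphism $\psi\in G(\mathbb{S}^N)$ such that the renormalized map $\Phi:=\psi\circ\phi\colon M\to\mathbb{S}^N$ has $\nu$-barycentre at the origin, i.e. $\int_M\Phi_i\,d\nu=0$ for $i=1,\dots,N+1$. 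Each coordinate component $\Phi_i$ is smooth with $|\Phi_i|\le 1$, hence lies in $H^{1,2}(M)\cap L^2(M,\nu)$ and is $L^2(M,\nu)$-orthogonal to the constants; it is therefore an admissible competitor in the variational characterization of $\mu_1(M,\nu)$.

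Feeding the $\Phi_i$ into the Rayleigh quotient gives $\mu_1(M,\nu)\cdot\|\Phi_i\|_{L^2(M,\nu)}^2\le\D_{g_M}[\Phi_i]$ for every $i$, and I would sum this over $i$. Because $\sum_i\Phi_i^2\equiv 1$ on $\mathbb{S}^N$, the left-hand side collapses to $\mu_1(M,\nu)\cdot\nu(M)$, while the right-hand side equals $\int_M\sum_i|\nabla\Phi_i|^2\,dm_{g_M}=\int_M|d\Phi|^2_{g_M}\,dm_{g_M}$, the energy of the conformal map $\Phi$. Conformality gives $\Phi^*g_{\mathbb{S}^N}=\rho^2g_M$ for a smooth $\rho\ge 0$, whence $|d\Phi|^2_{g_M}={\rm tr}_{g_M}(\Phi^*g_{\mathbb{S}^N})=n\rho^2$ and $dm_{\Phi^*g_{\mathbb{S}^N}}=\rho^n\,dm_{g_M}$, so that $\int_M|d\Phi|^2\,dm_{g_M}=n\int_M\rho^2\,dm_{g_M}$. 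Hölder's inequality with exponents $n/2$ and $n/(n-2)$ (an identity when $n=2$) bounds $\int_M\rho^2\,dm_{g_M}$ by $\bigl(\int_M\rho^n\,dm_{g_M}\bigr)^{2/n}\,V(M,g_M)^{1-2/n}=V(M,\Phi^*g_{\mathbb{S}^N})^{2/n}\,V(M,g_M)^{1-2/n}$. Finally, since $\Phi=\psi\circ\phi$ with $\psi\in G(\mathbb{S}^N)$, the very definition of the $N$-conformal volume gives $V(M,\Phi^*g_{\mathbb{S}^N})\le V_c^N(M,\phi)$; passing to the infimum over $\phi$ and then over $N$ replaces $V_c^N(M,\phi)$ by $V_c(M,[g_M])$, and rearranging the resulting inequality $\mu_1(M,\nu)\,\nu(M)\le n\,V_c(M,[g_M])^{2/n}\,V(M,g_M)^{1-2/n}$ is exactly \eqref{CESbound}.

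I expect the main obstacle to be the Hersch-type balancing lemma: that a finite non-atomic measure $\mu$ on $\mathbb{S}^N$ can be conformally renormalized so that its barycentre is the centre of the ball. The standard route parametrizes the conformal dilations of $\mathbb{S}^N$ by the points $\xi$ of the open ball $\mathbb{B}^{N+1}$, forms the barycentre map $F(\xi):=\int_{\mathbb{S}^N}y\,d\bigl((\psi_\xi)_*\mu\bigr)(y)\in\mathbb{B}^{N+1}$, and must show that $F$ extends continuously to $\overline{\mathbb{B}^{N+1}}$ with $F|_{\mathbb{S}^N}=\mathrm{id}$ --- it is precisely here that non-atomicity of $\mu$ (equivalently of $\nu$) is needed --- after which a Brouwer/degree argument forces $0\in F(\mathbb{B}^{N+1})$. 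A secondary, more routine point, which I would settle first, is to legitimize the Rayleigh-quotient characterization of $\mu_1(M,\nu)$ over $H^{1,2}(M)$: this rests on $\nu$ being finite and charging no $\D_{g_M}$-polar sets and on the assumed closability of $(\D_{g_M},H^{1,2}(M))$ on $L^2(M,\nu)$, together with the harmless fact that the smooth functions $\Phi_i$ lie in the form core and in $L^2(M,\nu)$.
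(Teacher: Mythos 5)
The paper does not prove this statement: it is imported verbatim as an external result from Colbois--El Soufi--Savo [CES, Theorem 3.1], so there is no internal proof to compare against. Your argument is nevertheless the standard one and, as far as I can check, correct: it is precisely the weighted Hersch/Li--Yau renormalization scheme that [CES] themselves use (their theorem is slightly more general, allowing a weight in the Dirichlet integral as well, but the case quoted here is exactly your computation). All the individual steps hold up: the coordinates $\Phi_i$ of the balanced conformal map are legitimate competitors for the infimum defining $\mu_1(M,\nu)$ (which is \emph{defined} in the statement as a Rayleigh quotient over $H^{1,2}(M)$, so your ``secondary point'' needs no further justification); summing gives $\mu_1(M,\nu)\,\nu(M)\le \int_M|d\Phi|^2\,dm_{g_M}=n\int_M\rho^2\,dm_{g_M}$; H\"older and the definition of $V_c^N(M,\phi)$ then yield $\mu_1(M,\nu)\,\nu(M)\le n\,V_c(M,[g_M])^{2/n}\,V(M,g_M)^{1-2/n}$, which rearranges to \eqref{CESbound}. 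You have also correctly isolated the one genuinely nontrivial ingredient, the Hersch balancing lemma for the push-forward measure $\phi_*\nu$, and correctly located where non-atomicity enters (continuity of the barycentre map up to the boundary of the ball parametrizing conformal dilations, so that the degree argument applies); note that finiteness of the fibres of the immersion $\phi$ on the compact $M$ is what guarantees $\phi_*\nu$ is again non-atomic, as you say. The only cosmetic caveat is that a component $\Phi_i$ may have $\|\Phi_i\|_{L^2(M,\nu)}=0$, in which case the test-function inequality holds trivially, so the summation is still valid.
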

\noindent
In the following, when the manifold $M$ is a Euclidean ball $B\subset\R^n$ and the measure $\nu$ coincides with the measure ${\bf 1}_B\cdot dx$, we will adopt the notation $\mu_1(B)$ in place of $\mu_1({\bf 1}_B\cdot dx)$  for the first nonzero eigenvalue of the Dirichlet integral on $B$.

\section{Bounded distortion of fundamental tones, bounded distortion of maps}

In the forthcoming sections, Euclidean spaces $\mathbb{R}^n$ will have dimension $n\ge 2$.
\vskip0.2truecm
The main result of this section deals with the distortion properties of maps $\gamma$ of a Euclidean domain $U\subseteq\mathbb{R}^n$ into another one $\gamma(U)\subseteq\mathbb{R}^n$, which transform, by composition, the finite energy multipliers algebra $\F\M(H^{1,2}(\gamma(U)))$ into the finite energy multipliers algebra $\F\M(H^{1,2}(U))$.
Before stating the main result, we need to prove the following
\begin{lem}\label{integrability}
Let $\gamma:U \rightarrow\mathbb{R}^n$ be a local homeomorphism  such that $a\circ\gamma\in \M(H^{1,2}(U))\cap H^{1,2}(U)$ is a finite energy multiplier for any finite energy multiplier $a\in \M(H^{1,2}(\gamma(U))\cap H^{1,2}(\gamma(U))$.
\par\noindent
Then $\gamma\in H^{1,2}_{\rm loc}(U,\mathbb{R}^n)$ and the Jacobian determinant is locally integrable $J_\gamma\in L^1_{\rm loc}(U)$.
\end{lem}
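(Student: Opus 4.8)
The plan is to feed the hypothesis suitably truncated coordinate functions. Being a local homeomorphism of the domain $U$, the map $\gamma$ is continuous and open, so $\gamma(U)\subseteq\R^n$ is open and $H^{1,2}(\gamma(U))$ is defined. Both assertions are local in nature, so I fix $x_0\in U$ and pick an open ball $B$ with $\overline B\subset U$ so small that $\gamma|_{\overline B}$ is a homeomorphism onto its image; thus $\gamma(\overline B)$ is compact and $\gamma|_B$ is injective.

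First I would manufacture, for each $i=1,\dots,n$, a finite energy multiplier of $H^{1,2}(\gamma(U))$ that coincides with the $i$-th coordinate function $y\mapsto y_i$ near $\gamma(\overline B)$: choose $\chi\in C^\infty_c(\R^n)$ equal to $1$ on a neighbourhood $W$ of the compact set $\gamma(\overline B)$, and set $a_i(y):=\chi(y)\,y_i$. Then $a_i\in{\rm Lip}_c(\R^n)$, so $a_i$ and $\nabla a_i$ are bounded with compact support; hence $a_i|_{\gamma(U)}\in H^{1,2}(\gamma(U))$ (its support meets $\gamma(U)$ in a set of finite measure), and the Leibnitz rule \eqref{leibnitz} shows that for every $b\in H^{1,2}(\gamma(U))$ one has $\nabla(a_ib)=(\nabla a_i)b+a_i\nabla b\in L^2(\gamma(U),\R^n)$, so that $a_i\in\F\M(H^{1,2}(\gamma(U)))$. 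By the hypothesis of the Lemma, $a_i\circ\gamma\in\M(H^{1,2}(U))\cap H^{1,2}(U)\subseteq H^{1,2}_{\rm loc}(U)$, hence $(a_i\circ\gamma)|_B\in H^{1,2}(B)$. Since $\gamma(B)\subseteq\gamma(\overline B)\subseteq W$, on $B$ one has $a_i\circ\gamma=\gamma_i$, so $\gamma_i|_B\in H^{1,2}(B)$ for each $i$. As $x_0$ was arbitrary, this gives $\gamma\in H^{1,2}_{\rm loc}(U,\R^n)$.

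For the Jacobian I would argue on the same ball $B$: from $\gamma\in H^{1,2}_{\rm loc}(U,\R^n)$ and $|B|<\infty$ we get $\gamma|_B\in H^{1,2}(B,\R^n)\subseteq H^{1,1}(B,\R^n)$, while continuity of $\gamma$ on $\overline B$ gives $\gamma|_B\in L^\infty(B,\R^n)$; moreover $\gamma|_B$ is $1$-to-$1$. Corollary \ref{J2} applied on $B$ (with $N=1$ and $E=\emptyset$) then yields $J_\gamma|_B\in L^1_{\rm loc}(B)$, and covering $U$ by such balls gives $J_\gamma\in L^1_{\rm loc}(U)$, which completes the argument.

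The only point requiring care is the bookkeeping around the local homeomorphism: the ball $B$ must be chosen small enough that $\gamma|_{\overline B}$ is a genuine homeomorphism, which is precisely what makes $\gamma(\overline B)$ compact (so the cut-offs $a_i$ can be built and their compositions agree with the $\gamma_i$ on $B$) and $\gamma|_B$ injective (so Corollary \ref{J2} applies with $N=1$). Everything else — the multiplier property of $a_i$ via the Leibnitz rule, the embedding $H^{1,2}(B)\subseteq H^{1,1}(B)$ valid on sets of finite measure, and the localization over a covering of $U$ — is routine.
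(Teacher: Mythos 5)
Your proposal is correct and follows essentially the same route as the paper: both feed compactly supported truncations of the coordinate functions $y\mapsto y_k$ into the hypothesis, identify $a_k\circ\gamma$ with $\gamma_k$ on a small neighbourhood to conclude $\gamma\in H^{1,2}_{\rm loc}(U,\R^n)\subset H^{1,1}_{\rm loc}(U,\R^n)$, and then invoke Corollary \ref{J2} for the local integrability of $J_\gamma$. The only differences are cosmetic (you work on small balls where $\gamma$ is injective and verify the multiplier property of the cut-off coordinates explicitly, while the paper uses arbitrary relatively compact open subsets and a cutoff $b\in C^1_c(U)$), and if anything your bookkeeping of where $\gamma(\overline B)$ is compact is slightly more careful.
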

\begin{proof} For each $k=1,\cdots ,n$ and each bounded open subset $U^\prime \subset U$, since the coordinate function $p_k(y):=y_k$ defined for $y=(y_1,\cdots ,y_n)\in\mathbb{R}^n$ is smooth, there exist a function $a_k\in C^\infty_c(\gamma(U))\subset \M(H^{1,2}(\gamma(U))\cap H^{1,2}(\gamma(U))$ such that $a_k(y)=p_k(y)=y_k$ for $y\in \gamma(U^\prime)$. By hypothesis, we have $a_k\circ\gamma\in \M(H^{1,2}(U)\cap H^{1,2}(U)$ so that $(a_k\circ\gamma)b\in H^{1,2}(U)$ for any $b\in H^{1,2}(U)$. Choosing $b\in C^1_c(U)\subset H^{1,2}(U)$ such that $b(x)=1$ for $x\in U^\prime$  and denoting by $\gamma_k:=p_k\circ\gamma$ the $k$-component of $\gamma$, we have $\gamma_k(x)=(p_k\circ\gamma)(x)=a_k (\gamma(x))b(x)$ for $x\in U^\prime$. Since the bounded open set $U^\prime\subset U$ can be chosen arbitrarily, we proved that $\gamma_k \in H^{1,2}_{\rm loc}(U)$ for any $k=1,\cdots,n$. Since $H^{1,2}_{\rm loc}(U)\subset H^{1,1}_{\rm loc}(U)$, we have $J_\gamma\in L^1_{\rm loc}(U)$  by Corollary \ref{J2}.
\end{proof}
The following are the main results of this section. They ascribe the bounded distortion of a map to the bounded distortion of the fundamental tones of multipliers of $\D$.
\begin{thm} \label{maintheorem}
Let $U\subseteq \R^n$ be a domain and $\gamma:U \rightarrow\mathbb{R}^n$ a local homeomorphism such that
\vskip0.1truecm\noindent
1) for any relatively compact domain $A\subseteq U$, $a\circ\gamma\in \F\M(H^{1,2}(A))$ is a finite energy multiplier for any finite energy multiplier
$a\in \F\M(H^{1,2}(\gamma(A))$
\vskip0.1truecm\noindent
2) $J_\gamma(x)\neq 0$, $dx$-a.e. $x\in U$ and $J_\gamma$ has constant sign in $U$.
\vskip0.2truecm\noindent Then
\vskip0.2truecm\noindent
i) if for some $K> 0$, any relatively compact domains $A\subseteq U$ and any nowhere constant, finite energy multiplier $a\in \F\M(H^{1,2}(\gamma(A)))$
one has
\begin{equation}\label{boundedness1.0}
\mu_1(\gamma(A), a)\le K^2\cdot\mu_1(A, a\circ\gamma)\, ,
\end{equation}
then $\gamma$ is a bounded distortion map with distortion coefficient bounded by 
\[
K\le K(\gamma)\le K \sqrt{\frac{n}{n-1}}\Bigl(\frac{V(\mathbb{S}^n)}{ V(\mathbb{B}^n)}\Bigr)^{1/n}\, .
\]
ii) In particular, if $\gamma$ is a homeomorphism and for $K> 0$, any relatively compact domain $A\subseteq U$ and any nowhere constant, finite energy multiplier $a\in \F\M(H^{1,2}(\gamma(A)))$
one has
\begin{equation}\label{boundedness1.1}
K^{-2}\cdot \mu_1(\gamma(A),a)\le\mu_1(A,a\circ\gamma)\le K^2\cdot\mu_1(\gamma(A),a)\, ,
\end{equation}
then $\gamma$ is quasiconformal.
\end{thm}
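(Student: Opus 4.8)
The plan is to reduce the statement to a pointwise infinitesimal inequality on $\gamma'$ and then obtain it by a blow–up, feeding in the Colbois--El Soufi--Savo bound (Theorem~\ref{CES}) on one side and the exact first Neumann eigenvalue of a round ball on the other.

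\textbf{Reduction.} By Lemma~\ref{integrability}, hypothesis~1) already forces $\gamma\in H^{1,2}_{\rm loc}(U,\R^n)$ and $J_\gamma\in L^1_{\rm loc}(U)$; together with hypothesis~2) (constant sign, nonvanishing a.e.), the only thing left in order to conclude that $\gamma$ has bounded distortion is the pointwise estimate $\norm{\gamma'(x)}\le K(\gamma)\,\abs{J_\gamma(x)}^{1/n}$ for a.e.\ $x$, with $K(\gamma)$ as claimed. Part~ii) is then immediate: the left-hand inequality of \eqref{boundedness1.1} is, after rearrangement, precisely \eqref{boundedness1.0}, so part~i) applies, $\gamma$ is a bounded distortion map, and being also a homeomorphism it is quasiconformal by Definition~\ref{BD}.

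\textbf{The test configuration and the upper bound for $K(\gamma)$.} Fix a point $x_0$ which is a Lebesgue point of $\gamma'$ with $J_\gamma(x_0)\neq0$, and a unit vector $e\in\R^n$. Since $\gamma$ is a local homeomorphism, for all small $\rho>0$ there is a relatively compact domain $A_\rho\subseteq U$, with $x_0\in A_\rho$, on which $\gamma$ restricts to a homeomorphism onto the ball $B_\rho:=B(\gamma(x_0),\rho)$; the sets $A_\rho$ shrink to $\{x_0\}$ as $\rho\downarrow0$. Take the affine multiplier $a(y):=\langle e,\,y-\gamma(x_0)\rangle$, which on the bounded domain $B_\rho$ is a nowhere constant element of $\F\M(H^{1,2}(B_\rho))$; by hypothesis~1) $a\circ\gamma\in\F\M(H^{1,2}(A_\rho))$, with energy measure $\Gamma[a\circ\gamma]=\abs{\gamma'(\cdot)^te}^2\,dx$, of full support because $\gamma'$ is invertible a.e. On the image side $\Gamma[a]=dy$, hence $\F^a=H^{1,2}(B_\rho)$ and
\[
\mu_1(\gamma(A_\rho),a)=\mu_1^N(B_\rho)=\frac{\mu_1^N(\mathbb{B}^n)}{\rho^2}\ \ge\ \frac{n-1}{\rho^2},
\]
the first nonzero Neumann eigenvalue of the ball, where we use only the elementary bound $\mu_1^N(\mathbb{B}^n)\ge n-1$. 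On the source side, Theorem~\ref{CES} applied to $M=A_\rho$ (a relatively compact Euclidean subdomain, so $V_c(A_\rho)=V(\mathbb{S}^n)$ by Lemma~\ref{conformallemma}) with $\nu=\Gamma[a\circ\gamma]$ gives
\[
\mu_1(A_\rho,a\circ\gamma)\ \le\ n\,V(\mathbb{S}^n)^{2/n}\,\frac{V(A_\rho)^{1-2/n}}{\Gamma[a\circ\gamma](A_\rho)}.
\]
Insert both into \eqref{boundedness1.0}. By the area formula (Theorem~\ref{J1}, applied on a ball containing $A_\rho$), $\int_{A_\rho}\abs{J_\gamma}\,dx=V(B_\rho)=V(\mathbb{B}^n)\rho^n$, and since $A_\rho$ shrinks regularly to $x_0$, Lebesgue differentiation gives $V(A_\rho)\sim V(\mathbb{B}^n)\rho^n/\abs{J_\gamma(x_0)}$ and $\Gamma[a\circ\gamma](A_\rho)\sim\abs{\gamma'(x_0)^te}^2\,V(A_\rho)$. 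Letting $\rho\downarrow0$,
\[
\abs{\gamma'(x_0)^te}^{2}\ \le\ \frac{K^2\,n}{n-1}\Bigl(\frac{V(\mathbb{S}^n)}{V(\mathbb{B}^n)}\Bigr)^{2/n}\abs{J_\gamma(x_0)}^{2/n},
\]
and taking the supremum over unit $e$ (recall $\norm{\gamma'(x_0)}=\sup_{\abs e=1}\abs{\gamma'(x_0)^te}$) yields the desired pointwise inequality with $K(\gamma)\le K\sqrt{n/(n-1)}\,(V(\mathbb{S}^n)/V(\mathbb{B}^n))^{1/n}$.

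\textbf{The lower bound $K\le K(\gamma)$.} Run the same construction at a point $x_0$ that is (within $\varepsilon$ of being) a maximizer of $x\mapsto\norm{\gamma'(x)}/\abs{J_\gamma(x)}^{1/n}$, this time with $e=u_1$, a left singular vector of $\gamma'(x_0)$ for its largest singular value $\sigma_1=\norm{\gamma'(x_0)}$. Then $\Gamma[a\circ\gamma]=\abs{\gamma'(\cdot)^tu_1}^2\,dx\to\sigma_1^2\,dx$, so $\mu_1(A_\rho,a\circ\gamma)\sim\mu_1^N(A_\rho)/\sigma_1^2$, while $\mu_1(\gamma(A_\rho),a)=\mu_1^N(\mathbb{B}^n)/\rho^2$. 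By the Szeg\"o--Weinberger inequality $\mu_1^N(A_\rho)\le\mu_1^N(B^{*})$, where $B^{*}$ is the ball of volume $V(A_\rho)\sim V(\mathbb{B}^n)\rho^n/\abs{J_\gamma(x_0)}$; hence $\mu_1^N(A_\rho)\le(1+o(1))\,\mu_1^N(\mathbb{B}^n)\abs{J_\gamma(x_0)}^{2/n}/\rho^2$. Feeding this into \eqref{boundedness1.0} the factor $\mu_1^N(\mathbb{B}^n)$ cancels and one is left with $\sigma_1^{2}\le K^{2}\abs{J_\gamma(x_0)}^{2/n}$; the supremum over $x_0$ gives $K(\gamma)\le K$, completing~i).

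\textbf{Main obstacle.} The clean structural point is the pairing of the CES upper bound (which ``forgets'' the shape of the image) with the exact Neumann eigenvalue of the image ball $B_\rho$, together with the fact that $\gamma$ mapping $A_\rho$ onto a genuine ball forces $V(A_\rho)\,\mathrm{avg}_{A_\rho}\abs{J_\gamma}=V(\mathbb{B}^n)\rho^n$, so that the Jacobian enters. The delicate part is the blow--up itself: $\gamma$ is only known to lie in $H^{1,2}_{\rm loc}$ --- below the regularity threshold that would guarantee a.e.\ differentiability of a homeomorphism when $n\ge3$ --- so one must justify that the preimage branches $A_\rho$ shrink to $x_0$ without degenerating (so that $A_\rho$ is comparable to a ball of radius $\asymp\rho$) and that the averages of $\abs{J_\gamma}$ and of $\abs{\gamma'(\cdot)^te}^2$ over $A_\rho$ converge to their values at $x_0$. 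This is where the monotonicity of local homeomorphisms, the area formula, and very likely a preliminary integrated (rather than pointwise) form of the distortion estimate --- used to bootstrap the integrability of $\gamma'$ --- have to be brought in; I expect this to be the technical heart, while the spectral ingredients (Theorems~\ref{CES} and~\ref{J1}, Lemma~\ref{conformallemma}, the Szeg\"o--Weinberger inequality) enter essentially as black boxes.
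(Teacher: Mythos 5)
Your main line --- pairing the Colbois--El Soufi--Savo bound (Theorem~\ref{CES}) on the source domain, with weight $\nu=\Gamma[a\circ\gamma]$ and $V_c(A)=V(\mathbb{S}^n)$ from Lemma~\ref{conformallemma}, against the exact first Neumann eigenvalue $\mu_1(\mathbb{B}^n)/\rho^2$ of the image ball, testing with multipliers of unit gradient and then differentiating --- is exactly the paper's, down to the constant (the paper encodes $\mu_1(\mathbb{B}^n)=c_n^2\ge n-1$ via the Bessel equation, which you take as known). Two steps, however, do not survive scrutiny.

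First, the blow-up. You differentiate on the source side along the preimages $A_\rho=\gamma^{-1}(B_\rho)$, and you yourself point out that this requires $A_\rho$ to shrink regularly to $x_0$ and the averages of $|J_\gamma|$ and $|\gamma'(\cdot)^te|^2$ over $A_\rho$ to converge to their values at $x_0$ --- none of which is available when $\gamma$ is a priori only an $H^{1,2}_{\rm loc}$ homeomorphism. This is precisely the step the paper is organized to avoid: it keeps the inequality in integrated form for \emph{every} ball $B\subseteq\gamma(U)$, pushes everything forward to the image as the Radon measures $\gamma(\Gamma[a\circ\gamma])$, $m$ and $\gamma(m)$, and differentiates the first two against $\gamma(m)$ over the Euclidean balls $B$ themselves (Besicovitch--Lebesgue differentiation of Radon measures needs no geometric control of $\gamma^{-1}(B)$); the two Radon--Nikodym derivatives are then computed exactly by the change-of-variables formula as $|\nabla(a\circ\gamma)(\gamma^{-1}(y))|^2$ and $|J_\gamma(\gamma^{-1}(y))|$. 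Recasting your estimate in that form closes the gap with no extra hypotheses.

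Second, the paragraph headed ``lower bound $K\le K(\gamma)$'' is both misdirected and unsound. Its conclusion $\sigma_1^2\le K^2|J_\gamma(x_0)|^{2/n}$ is the \emph{upper} bound $K(\gamma)\le K$, not $K\le K(\gamma)$; were it correct, it would render the CES machinery and the factor $\sqrt{n/(n-1)}\,\bigl(V(\mathbb{S}^n)/V(\mathbb{B}^n)\bigr)^{1/n}$ superfluous, which should have been a warning sign. The step that fails is $\mu_1(A_\rho,a\circ\gamma)\sim\mu_1^N(A_\rho)/\sigma_1^2$: at a Lebesgue point the weight $|\gamma'(\cdot)^t u_1|^2$ approaches $\sigma_1^2$ only in $L^1$-average over $A_\rho$, and first nonzero eigenvalues of weighted Rayleigh quotients are not stable under $L^1$ perturbations of the weight --- this is exactly why only the total mass $\nu(A)$, an $L^1$ quantity amenable to differentiation, enters the CES bound. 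The paper does not attempt a separate spectral argument here: it reads $K\le K(\gamma)$ off the explicit prefactor $\bigl(V(\mathbb{S}^n)/V_c'(\mathbb{B}^n)\bigr)^{1/n}\ge1$ in \eqref{coeff1}. Note in any case that no argument can deduce $K\le K(\gamma)$ from hypothesis \eqref{boundedness1.0} alone, since \eqref{boundedness1.0} for a given $K$ implies it for every larger $K$ while $K(\gamma)$ is fixed; so this half of the claim is not worth further effort.
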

\begin{proof}
Notice first that by assumption 1) and Lemma \ref{integrability}, and by assumption 2), the requirements i) and ii) in Definition \ref{BD} are satisfied. Notice also that if $a\in\F\M(H^{1,2}(\gamma(A)))$ is nowhere constant, then ${\rm supp}(\Gamma[a])=\gamma(A)$ and we can consider, by Proposition \ref{DF}, the Dirichlet form $(\D,\F^a)$ on $L^2(\gamma(A),\Gamma[a])$ for any relatively compact domains $A\subseteq U$. Moreover, since $\gamma$ is a local homeomorphism, then $a\circ\gamma\in\F\M(H^{1,2}(A))$ is nowhere constant too so that ${\rm supp}(\Gamma[a\circ\gamma])=A$ and we can consider the Dirichlet form $(\D,\F^{a\circ\gamma})$ on $L^2(A,\Gamma[a])$ for any relatively compact domains $A\subseteq U$.\\
Restricting the local homeomorphism $\gamma$ to the sets of a suitable open cover of $U$, we may reduce the proof to the case of a global homeomorphism. {\it Since now on we will consider $\gamma$ to be an homeomorphism between the domains $U$ and $\gamma(U)$}.
\vskip0.2truecm\noindent
{\bf i}) Let $B\subseteq \gamma(U)\subset\R^n$ be bounded domain having the extension property and continuous boundary (see Section 2.3) and consider the bounded domain $A:=\gamma^{-1}(B)\subset\R^n$. By the definition of the fundamentals tones, we have
\begin{equation}\label{eigenvalue1}
\mu_1(B, a)=\inf_{b\in\F^a ,\D[b]>0,b\perp 1}\frac{\mathcal{D}[b]}{\|b\|^2_{L^2(B,\Gamma[a])}}\, ,\qquad
\mu_1(A,a\circ\gamma)=\inf_{b\in\F^{a\circ\gamma} ,\D[b]>0, b\perp 1}\frac{\mathcal{D}[b]}{\|b\|^2_{L^2(A,\Gamma[a\circ\gamma])}}
\end{equation}
so that assumption \eqref{boundedness1.0} can then be rewritten as
\begin{equation}\label{boundedness1.0.1}
\sup_{b\in\F^{a\circ\gamma} ,\D[b]=1, b\perp 1}\int_{A} |b(x)|^2|\nabla (a\circ\gamma)(x)|^2\, dx  \le K^2\cdot \sup_{b\in\F^{a} ,\D[b]=1, b\perp 1}\int_{B} |b(y)|^2|\nabla a(y)|^2\, dy\, .
\end{equation}
Choose $a\in C^1(\gamma(U))$ such that $|\nabla a|=1$ on $\gamma(U)$. Since $B$ has the extension property, the restriction of $a$ to $B$ is a finite energy multiplier of $H^{1,2}(B)$ and, by the assumption 1), $a\circ\gamma$ is a finite energy multiplier of $H^{1,2}(A)$. From \eqref{boundedness1.0.1} we have
\begin{equation}\label{boundedness1.0.2}
\sup_{b\in\F^{a\circ\gamma} ,\D[b]=1, b\perp 1}\int_{A} |b(x)|^2|\nabla (a\circ\gamma)(x)|^2\, dx  \le K^2\cdot \sup_{b\in\F^{a} ,\D[b]=1, b\perp 1}\int_{B} |b(y)|^2\, dy\, .
\end{equation}
Since, by assumption 1), $a\circ\gamma$ is a finite energy multiplier, the Radon measure $\nu:=\Gamma [a\circ\gamma]$ on $A$ is finite and we can apply \cite[Theorem 3.1]{CES} (or Theorem \ref{CES} above) to the closure of the quadratic form $(\D,H^{1,2}(A))$ in \eqref{Dirichletform}, which is nothing but the Dirichlet space $(\D,\F^{a\circ\gamma})$ on $L^2(A,\Gamma[a\circ\gamma])$. Noticing that
\[
\nu(A)=\int_A|\nabla(a\circ\gamma)(x)|^2\, dx\, ,
\]
the Colbois-El Soufi-Savo bound \eqref{CESbound} allows to estimate from below the l.h.s. of \eqref{boundedness1.0.2} to get
\begin{equation}\label{lowerbound}
\frac{1}{n}\Bigl(\frac{V(A)}{V_c(A)}\Bigr)^{2/n}
\frac{1}{V(A)}\int_{A}|\nabla(a\circ\gamma)(x)|^2\, dx \le K^2\cdot \sup_{b\in\F^{a} ,\D[b]=1, b\perp 1} \int_{B} |b(y)|^2\, dy\, .
\end{equation}
\noindent
Since we assumed that $|\nabla a|=1$ on $B$, we have $\Gamma[a]=dx$ and $\F^a=H^{1,2}(B)$. The Dirichlet form $(\D,\F^a)$ on $L^2(B,\Gamma[a])=L^2(B,dx)$ is just the Dirichlet form of the Brownian motion in $B\cup \partial B$ reflected at $\partial B$ (see \cite[Section 2.2.4 ]{CF}). In particular, the fundamental tone $\mu_1(B, a)$ is just  the first nonzero eigenvalue $\mu_1(B)$ of the Neumann Laplacian on $B$ and we can write (\ref{lowerbound}) as
\begin{equation}\label{lowerbound.1}
\frac{1}{n}\Bigl(\frac{V(A)}{V_c(A)}\Bigr)^{2/n}
\frac{1}{V(A)}\int_{A}|\nabla(a\circ\gamma)(x)|^2\, dx \le K^2\cdot \mu_1(B)^{-1}\, .
\end{equation}
To bound above the r.h.s. of \eqref{lowerbound.1}, let us introduce the {\it effective conformal volume} $V^\prime_c(B)$ by
\begin{equation}\label{effectivevolume}
\mu_1(B)^{-1}=: \frac{1}{n}\Bigl(\frac{V(B)}{V^\prime_c(B)}\Bigr)^{2/n}\, .
\end{equation}
By the El Soufi-Ilias bound \eqref{ESIbound}, we have
\begin{equation}
\frac{1}{n}\Bigl(\frac{V(B)}{V_c(B)}\Bigr)^{2/n}\le \mu_1 (B)^{-1}=: \frac{1}{n}\Bigl(\frac{V(B)}{V^\prime_c(B)}\Bigr)^{2/n}
\end{equation}
so that the effective conformal volume does not exceed its conformal volume: $V^\prime_c (B)\le V_c(B)$.
By \eqref{effectivevolume} and \eqref{lowerbound.1} we thus have
\begin{equation}\label{lowerbound1.1}
\frac{1}{n}\Bigl(\frac{V(A)}{V_c(A)}\Bigr)^{2/n}
\frac{1}{V(A)}\int_{A}|\nabla (a\circ\gamma)(x)|^2\, dx   \le K^2\cdot \frac{1}{n}\Bigl(\frac{V(B)}{V^\prime_c(B)}\Bigr)^{2/n}
\end{equation}
which can be written as
\begin{equation}\label{lowerbound1.2}
\frac{1}{V(A)}\int_{A}|\nabla (a\circ\gamma)(x)|^2\, dx   \le K^2\cdot \Bigl(\frac{V_c(A)}{V^\prime_c(B)}\Bigr)^{2/n}\cdot
\Bigl(\frac{V(B)}{V(A)}\Bigr)^{2/n}
\end{equation}
\vskip0.2truecm\noindent
for any  bounded domain $B\subseteq \gamma(U)$ having the extension property and continuous boundary.
\vskip0.2truecm\noindent
{\bf ii}) Assume now that $B\subseteq \gamma(U)$ is a ball of radius $r>0$. By a standard result, (see [Ch Thm 4 Chapter II]), $\mu_1(B)$ depends upon the radius through the relation
\begin{equation}
\mu_1(B)=\frac{c_n^{2}}{r^2}\, ,
\end{equation}
(in particular $\mu_1(\mathbb{B}^n)=c_n^2$) where $c_n$ is the first critical point of the first order Bessel function, solution to the ordinary differential equation
\begin{equation}\label{differential}
 z^{\prime\prime}(t)+\frac{n-1}{t}z^\prime(t) +\Bigl(1-\frac{n-1}{t^2}\Bigr)z(t)=0\qquad t\ge 0
\end{equation}
subject to the initial conditions $z(t)=0\, ,z^\prime (0)=1$.
Since $V(B)=r^nV(\mathbb{B}^n)$, by \eqref{effectivevolume} we have
\[
n\Bigl(\frac{V^\prime_c(B)}{r^nV(\mathbb{B}^n)}\Bigr)^{2/n} =\frac{c_n^{2}}{r^2}
\]
so that the effective conformal volume is independent on center and radius of $B$ and given by
 \begin{equation}\label{effectivevolumeball}
V_c^\prime(B)=V_c^\prime(\mathbb{B}^n)=V(\mathbb{B}^n)\Bigl(\frac{c_n^{2}}{n}\Bigr)^{n/2}=V(\mathbb{B}^n)\Bigl(\frac{c_n}{\sqrt{n}}\Bigr)^{n}.
\end{equation}
Since, by Lemma \ref{conformallemma}, $V_c(A)=V(\mathbb{S}^n)$, inequality \eqref{lowerbound1.2} can be written as
\begin{equation}\label{lowerbound1.3}
\frac{1}{V(A)}\int_{A}|\nabla (a\circ\gamma)(x)|^2\, dx   \le K^2\cdot \Bigl(\frac{V(\mathbb{S}^n)}{ V^\prime_c(\mathbb{B}^n)}\Bigr)^{2/n}\cdot \Bigl(\frac{V(B)}{V(A)}\Bigr)^{2/n}
\end{equation}
and as
\begin{equation}\label{lowerbound1.4}
\frac{\Gamma[a\circ\gamma](A)}{V(A)} \le K^2\cdot \Bigl(\frac{V(\mathbb{S}^n)}{ V^\prime_c(\mathbb{B}^n)}\Bigr)^{2/n}\cdot \Bigl(\frac{V(B)}{V(A)}\Bigr)^{2/n}\, .
\end{equation}
Since $\Gamma[a\circ\gamma](A)=\Gamma[a\circ\gamma](\gamma^{-1}(B))=\gamma(\Gamma[a\circ\gamma])(B)$, denoting by $m$ the Lebesgue measure, we have $V(B)=m(B)$, $V(A)=m(A)=m(\gamma^{-1}(B))=\gamma(m)(B)$ so that (\ref{lowerbound1.4}) can be written
\begin{equation}\label{lowerbound1.5}
\frac{\gamma(\Gamma[a\circ\gamma])(B)}{\gamma(m)(B)} \le K^2\cdot \Bigl(\frac{V(\mathbb{S}^n)}{ V^\prime_c(\mathbb{B}^n)}\Bigr)^{2/n}\cdot \Bigl(\frac{m(B)}{\gamma(m)(B)}\Bigr)^{2/n}\, .
\end{equation}
\noindent
The arbitrariness of the ball $B\subseteq\gamma(U)$ and a double application of the Lebesgue Differentiation Theorem allow to obtain
\begin{equation}\label{lowerbound1.6}
\frac{d\gamma(\Gamma[a\circ\gamma])}{d\gamma(m)} \le K^2\cdot \Bigl(\frac{V(\mathbb{S}^n)}{ V^\prime_c(\mathbb{B}^n)}\Bigr)^{2/n}\cdot \Bigl(\frac{dm}{d\gamma(m)}\Bigr)^{2/n}\qquad \gamma(m)-a.e.\quad{\rm on}\quad \gamma(U).
\end{equation}
for any $a\in C^1(\gamma(U))$ such that $|\nabla a|=1$ on $\gamma(U)$.
Let us compute the Radon-Nikodym derivatives appearing in (\ref{lowerbound1.6}). On one hand, for any $h\in C_c(\gamma(U))$ we have
\[
\int_{\gamma(U)}h(y)\, \gamma(m)(dy)=\int_U h(\gamma(x))m(dx)=\int_{\gamma(U)}h(y)\cdot |J_\gamma(\gamma^{-1}(y))|^{-1}\, m(dy)
\]
so that $\frac{dm}{d\gamma(m)}(y)=|J_\gamma(\gamma^{-1}(y))|$, for $\gamma(m)$-a.e. $y\in \gamma(U)$. On the other hand, for any $h\in C_c(\gamma(U))$ we have
\[
\begin{split}
\int_{\gamma(U)}h(y)\gamma(\Gamma[a\circ\gamma])(dy)&=\int_U h(\gamma(x))\,\Gamma[a\circ\gamma](dx)\\
&=\int_U h(\gamma(x))\cdot |\nabla(a\circ\gamma)(x)|^2\, m(dx)\\
&=\int_U h(\gamma(x))\cdot |\nabla(a\circ\gamma)(\gamma^{-1}(\gamma(x)))|^2\, m(dx)\\
&=\int_U h(y)\cdot |\nabla(a\circ\gamma)(\gamma^{-1}(y))|^2\, \gamma(m)(dy)
\end{split}
\]
so that $\frac{d\gamma(\Gamma[a\circ\gamma])}{d\gamma(m)}=|\nabla(a\circ\gamma)(\gamma^{-1}(y))|^2$, for $\gamma(m)$-a.e. $y\in \gamma(U)$. From (\ref{lowerbound1.6}) we have
\[
|\nabla(a\circ\gamma)(\gamma^{-1}(y))|^2\le  K^2\cdot \Bigl(\frac{V(\mathbb{S}^n)}{ V^\prime_c(\mathbb{B}^n)}\Bigr)^{2/n}\cdot |J_\gamma(\gamma^{-1}(y))|^{2/n}\qquad \gamma(m)-a.e.\quad{\rm on}\quad \gamma(U)
\]
and then
\begin{equation}\label{lowerbound1.7}
|\nabla(a\circ\gamma)(x)|^2\le  K^2\cdot \Bigl(\frac{V(\mathbb{S}^n)}{ V^\prime_c(\mathbb{B}^n)}\Bigr)^{2/n}\cdot |J_\gamma(x)|^{2/n}\qquad m-a.e.\quad{\rm on}\quad U\, .
\end{equation}
For a fixed $x\in U$ such that $\gamma'(x)\in \mathbb{M}_n(\mathbb{R})$ exists, choose a multipliers $a\in C^1(\gamma(U))$ such that $\nabla a(\gamma(x))$ is a unit norm eigenvector of $\gamma^\prime(x)^t$ corresponding to its largest eigenvalue $\|\gamma^\prime(x)^t\|=\|\gamma^\prime(x)\|$. We then have
\[
|\nabla(a\circ\gamma)(x)|=|\gamma^\prime(x)^t\cdot (\nabla a (\gamma(x))) |= \|\gamma^\prime(x)\|\cdot |\nabla a (\gamma(x_0))|=\|\gamma^\prime(x)\|
\]
and by (\ref{lowerbound1.7}) 
\begin{equation}\label{distortion2}
\|\gamma^\prime(x)\|\le K\cdot \Bigl(\frac{V(\mathbb{S}^n)}{ V^\prime_c(\mathbb{B}^n)}\Bigr)^{1/n}\cdot |J_{\gamma}(x)|^{1/n}\, .
\end{equation}
Since $\gamma'(x)$ exists for m-a.e. $x\in U$, this proves that (\ref{distortion2}) holds true $m$-a.e. on $U$. This implies that even the requirement iii) in Definition \ref{BD} is satisfied so that $\gamma$ has bounded distortion with distortion coefficient
\begin{equation}\label{coeff1}
K(\gamma)= K\Bigl(\frac{V(\mathbb{S}^n)}{ V^\prime_c(\mathbb{B}^n)}\Bigr)^{1/n} =K\sqrt{\frac{n}{\mu_1(\mathbb{B}^n)}}\Bigl(\frac{V(\mathbb{S}^n)}{ V(\mathbb{B}^n)}\Bigr)^{1/n}.
\end{equation}
Since $V_c'(\mathbb{B}^n)\ge V_c(\mathbb{B}^n)=V(\mathbb{S}^n)$, we have $K(\gamma)\ge K$.
\vskip0.2truecm\noindent
To obtain a weaker but more explicit estimate on $K(\gamma)$ in terms the dimension $n$, notice that the first order Bessel function, solution of \eqref{differential}, subject to the specified boundary conditions at $t=0$, satisfies $z^{\prime\prime}(c_n)\le 0$. In fact if otherwise $z^{\prime\prime}(c_n)> 0$, as $z^\prime (c_n)=0$, we would have a local minimum in $t=c_n$. This would imply the existence of a local maximum in $[0,c_n)$. But since $z(0)=0$ and $z^\prime (0)=1$, this local maximum would lie in the interval $(0,c_n)$. As the function is differentiable, there would exists a critical point in $(0,c_n)$ in contradiction with the assumption that $t=c_n$ is the first one from the left. This proves also that $z(c_n)>0$, so that by \eqref{differential} we have
\[
\Bigl(1-\frac{n-1}{c_n^2}\Bigr)\ge 0,\qquad \mu_1(\mathbb{B}^n)=c_n^2\ge n-1
\]
and then
\begin{equation}\label{coeff2}
K\le K(\gamma)\le K \sqrt{\frac{n}{n-1}}\Bigl(\frac{V(\mathbb{S}^n)}{ V(\mathbb{B}^n)}\Bigr)^{1/n}\, .
\end{equation}

\end{proof}
\noindent
In case $\gamma$ is a continuously differentiable map, the assumptions of Theorem \ref{maintheorem} simplifies to

\begin{thm}
Let $\gamma\in C^1(U,\mathbb{R}^n)$ be locally invertible and such that $J_\gamma$ has constant sign.
\vskip0.1truecm\noindent
\noindent
i) If for some $K> 0$, any relatively compact domains $A\subseteq U$ and any nowhere constant $a\in C^1(\gamma(U))$  one has
\begin{equation}
\mu_1(\gamma(A), a)\le K^2\cdot\mu_1(A, a\circ\gamma)\, ,
\end{equation}
then $\gamma$ is a map with bounded distortion.
\vskip0.2truecm\noindent
ii) In particula, if $\gamma$ is invertible and for some $K> 0$, any relatively compact domain $A\subseteq U$ and any nowhere constant $a\in C^1(\gamma(U))$  one has
\begin{equation}
K^{-2}\cdot \mu_1(\gamma(A),a)\le\mu_1(A,a\circ\gamma)\le K^2\cdot\mu_1(\gamma(A),a)\, ,
\end{equation}
then $\gamma$ is a quasiconformal map.
\end{thm}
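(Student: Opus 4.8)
The plan is to deduce the statement from Theorem~\ref{maintheorem} and, more precisely, from its proof, by observing that the extra $C^1$ regularity of $\gamma$ more than compensates for the hypotheses that have been dropped. First I would dispose of the routine reductions. Since $\gamma\in C^1(U,\R^n)$ has locally bounded gradient, $\gamma\in H^{1,2}_{\mathrm{loc}}(U,\R^n)\subset H^{1,1}_{\mathrm{loc}}(U,\R^n)$ and $J_\gamma$ is continuous, hence $J_\gamma\in L^1_{\mathrm{loc}}(U)$; together with the standing assumption that $J_\gamma$ has constant sign, this gives at once requirements i) and ii) of Definition~\ref{BD}, so that Lemma~\ref{integrability} (hence assumption 1) of Theorem~\ref{maintheorem}, which is used there only to obtain these two facts) is not needed. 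Being continuous and locally invertible, $\gamma$ is moreover a local homeomorphism, in particular an open map; as in the proof of Theorem~\ref{maintheorem} one passes to an open cover of $U$ on whose members $\gamma$ is a genuine homeomorphism, so that below we may assume $\gamma:U\to\gamma(U)$ to be a homeomorphism.

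Next I would observe that the proof of Theorem~\ref{maintheorem} applies the fundamental tone inequality \eqref{boundedness1.0} (resp.\ \eqref{boundedness1.1}) only to \emph{affine} multipliers on $\gamma(U)$: once to a coordinate function $a(y)=y_1$ — which has $|\nabla a|\equiv 1$ on $\gamma(U)$ — in order to reach \eqref{lowerbound1.7} through the Colbois--El Soufi--Savo bound applied on balls $B\subseteq\gamma(U)$ with $A=\gamma^{-1}(B)$, and once to an affine map $a(y)=v\cdot y$ whose constant gradient $v$ is a unit eigenvector of $\gamma'(x)^t$ at a fixed differentiability point $x$, in order to pass from \eqref{lowerbound1.7} to \eqref{distortion2}. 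Such $a$ belong to $C^1(\gamma(U))$, so the hypothesis of the present theorem covers them; and $a\circ\gamma$ is then $C^1$ on $U$, hence on every relatively compact $A\subseteq U$ it restricts to a bounded Lipschitz function, i.e.\ to a finite energy multiplier of $H^{1,2}(A)$, which is moreover nowhere constant because $\gamma$ is open — were $\nabla(a\circ\gamma)$ to vanish on a nonempty open set $O$, then $\gamma$ would map $O$ into a hyperplane $\{v\cdot y=c\}$, contradicting the openness of $\gamma(O)$. In particular $\nabla(a\circ\gamma)$, being continuous, is nonzero on a dense open subset of $A$, so $\mathrm{supp}(\Gamma[a\circ\gamma])=A$, and the finite measure $\nu=\Gamma[a\circ\gamma]$ together with the Dirichlet form $(\D,\F^{a\circ\gamma})$ on $L^2(A,\Gamma[a\circ\gamma])$ are exactly those appearing in the proof of Theorem~\ref{maintheorem}. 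Thus assumption 1) of Theorem~\ref{maintheorem}, which there serves only to guarantee that $a\circ\gamma$ is a finite energy multiplier, is superfluous here.

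With these remarks the chain of estimates of the proof of Theorem~\ref{maintheorem}, parts i) and ii), carries over unchanged and yields \eqref{distortion2}, i.e.\ the pointwise bound $\|\gamma'(x)\|\le K\,(V(\mathbb{S}^n)/V^\prime_c(\mathbb{B}^n))^{1/n}\,|J_\gamma(x)|^{1/n}$ for $dx$-a.e.\ $x\in U$, which is requirement iii) of Definition~\ref{BD}; one should note that the ``$J_\gamma\neq 0$'' half of assumption 2) of Theorem~\ref{maintheorem} is never used in obtaining \eqref{distortion2}, that assumption being needed there only for the constant-sign clause of Definition~\ref{BD} ii), which we are assuming. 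This settles i); in case ii), an invertible such $\gamma$ is a homeomorphism onto $\gamma(U)$ by invariance of domain, and a bounded distortion homeomorphism is quasiconformal by Definition~\ref{BD}. The only delicate point, and the one to be carried out carefully, is the bookkeeping of the middle paragraph: that every occurrence of ``finite energy multiplier'' and ``nowhere constant'' in the proof of Theorem~\ref{maintheorem} can be met by the affine $a$ and the $C^1$ compositions $a\circ\gamma$ — the nowhere-constancy being precisely where local invertibility (openness of $\gamma$) is used — whereas the remaining ingredients, namely the Colbois--El Soufi--Savo estimate, the effective conformal volume computation on balls, the two applications of the Lebesgue Differentiation Theorem, and the identity $dm/d\gamma(m)=|J_\gamma\circ\gamma^{-1}|$ valid for a $C^1$ homeomorphism via the area formula, are all insensitive to whether $J_\gamma$ vanishes on a Lebesgue-null set.
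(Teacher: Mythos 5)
Your proposal is correct and follows essentially the same route as the paper: the paper's own proof likewise observes that $C^1$ regularity of $a$ and of $a\circ\gamma$ supplies, on relatively compact subdomains, exactly the (finite energy) multiplier hypotheses of Theorem \ref{maintheorem}, and then repeats that proof verbatim. Your version merely makes explicit some bookkeeping the paper leaves implicit (that only affine test multipliers with $|\nabla a|\equiv 1$ are ever used, that nowhere-constancy of $a\circ\gamma$ follows from openness of $\gamma$, and that local integrability of $J_\gamma$ is immediate from continuity), all of which is sound.
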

\begin{proof}
Notice that, since $A$ is relatively compact in $U$ and $\gamma$ is continuous, $\gamma(A)$ is relatively compact in $\gamma(U)$ and then functions in $C^1(\gamma(U))$ restrict to multipliers of $H^{1,2}(\gamma(A))$. Analogously, by assumption, $a\circ\gamma$ belongs to $C^1(U)$, so that $a\circ\gamma$ is a multiplier of $H^{1,2}(A)$. This is just what is needed to repeat the reasoning in the proof of item i) in Theorem \ref{maintheorem}.
\end{proof}

\begin{rem}
i) It is enough to verify conditions (6.23), (6.24) just for a countable base of open, relatively compact sets $A\subseteq U$ and for a corresponding countable family of multipliers $a\in C^1(\gamma(U))$.\\
i) Since $V(\mathbb{S}^n)/V(\mathbb{B}^n) \simeq \sqrt{2\pi n}$ as $n\to +\infty$, the bound $(6.22)$ on the distortion coefficient tends to became optimal as the dimension became larger: $\lim_n K(\gamma)/K=1$.
\end{rem}
\section{Conclusions}
The aim of this work has been to connect, in a Euclidean setting, the quasiconformal geometry of a domain $U$ to the spectral properties of its energy integral $(\D,H^{1,2}(U))$ on $L^2(U,dx)$. A central role has been played by the algebra of multipliers $\M(H^{1,2}(U))$ whose properties reflect the potential theory of the Dirichlet space.\\
With respect to the quasiconformal geometry, $\M(H^{1,2}(U))$ plays a role alternative to the Royden algebra $H^{1,n}(U)\cap L^\infty(U,dx)$ (see [Lew], [Mos]). Recall that quasiconformal maps between domains $\gamma:U \to V$ are characterized as those which establishes an invertible endomorphism $\alpha_\gamma (a):=a\circ \gamma^{-1}$ between the normed algebras $H^{1,n}(V)\cap L^\infty(V,dx)$ and $H^{1,n}(U)\cap L^\infty(U,dx)$.
An essential difference between the algebras $\M(H^{1,2}(U))$ and $H^{1,n}(U)\cap L^\infty(U,dx)$ is that the definition of the former does not involves, explicitly, higher order integrability of the gradient of functions, as the Sobolev space $H^{1,n}(U)$ does, nor the dimension of the Euclidean space. The multiplier algebra $\M(H^{1,2}(U))$ is intrinsic to the space $H^{1,2}(U)$ and its use clarify that the quasiconformal geometry of a Euclidean domain underlies, its energy functional only, with no reference to its volume measure. This aspect is highlighted by the results of Section 4 in which the emphasis is not on the use of the multipliers of $H^{1,2}(\R^n)$ itself but rather on the use of the multipliers of the extended space $H^{1,2}_e(\R^n)$.
An advantage of the present approach is that it can be considered on any Dirichlet space like smooth manifolds, metric spaces or self-similar fractals.

\normalsize
\vskip1.0truecm
\begin{center} \bf REFERENCES\end{center}
\vskip0.2truecm\normalsize
\begin{enumerate}


\bibitem[ANPS]{ANPS} W. Arendt, R. Nittka, W. Peter, F. Steiner, \newblock{Weyl's Law: Spectral Properties of the Laplacian in Mathematics and Physics},
\newblock{in { \it Mathematical Analysis of Evolution, Information, and Complexity}}, Wolfgang Arendt, Wolfgang P. Schleich (Editors), \newblock{Wiley-VCH, 2009}.

\bibitem[CF]{CF} Z.Q. Chen, M. Fukushima,
\newblock{ {\it Symmetric Markov processes, time change, and boundary theory}},
\newblock{London Mathematical Society Monographs Series, 35}, \newblock{Princeton University Press, Princeton, NJ, 2012}.

\bibitem[Ch]{Ch} I. Chavel,
\newblock{ {\it Eigenvalues in Riemannian geometry. Including a chapter by Burton Randol. With an appendix by Jozef Dodziuk.}},
\newblock{Pure and Applied Mathematics, 115}, \newblock{Academic Press, Inc., Orlando Fl., 1984}, xiv+362 pp..

\bibitem[CS]{CS} F. Cipriani, J.-L. Sauvageot,
\newblock{Variations in noncommutative potential theory: finite energy states, potentials and multipliers},
\newblock{\it  Trans. Amer. Math. Soc.} {\bf 367} {\rm (2015)}, no. 7, 4837-4871.

\bibitem[CES]{CES} B. Colbois, A. El Soufi, A. Savo,
\newblock{Eigenvalues of the Laplacian on a compact manifold with density},
\newblock{\it  Comm. Anal. Geom.} {\bf 23} {\rm (2015)}, no. 3, 639-670.

\bibitem[EI]{EI} A. El Soufi, S. Ilias, \newblock{Immersions minimales, premi\`ere valeur propre du laplacien et volume conforme},
\newblock{\it Math. Ann.} {\bf 275} {\rm (1986)}, 257-267.



\bibitem[Ge]{Ge} F.W. Gehring, \newblock{Rings and quasiconformal mappings in space},
\newblock{\it Trans. Amer. Math. Soc.} {\bf 103} {\rm (1962)}, 353-393.

\bibitem[GV]{GV} V.M. Gol'shtein, S.K. Vodop'yanov, \newblock{Quasiconformal mappings and spaces of
functions with generalised first derivatives},
\newblock{\it Sibirsk Mat. Zh.} {\bf 17} {\rm (1976)}, 515-531.

\bibitem[IM]{IM} T. Iwaniec, G. Martin,
\newblock{\it Geometric function theory and non-linear analysis},
\newblock{Oxford Mathematical Monographs},
\newblock{The Clarendon Press, Oxford Univ., New York, 2001}.

\bibitem[K]{K} M. Kac,
\newblock{Can one hear the shape of a drum?},
\newblock{\it   Amer. Math. Monthly} {\bf 73} no. 4, part II {\rm (1966)}, no. 2, 1-23.

\bibitem[Lew]{Lew} L.G. Lewis,
\newblock{Quasiconformal mappings and Royden algebras in space},
\newblock{\it  Trans. Amer. Math. Soc.} {\bf 158} {\rm (1971)}, no. 2, 481-492.

\bibitem[LY]{LY} P.Li, S.-T. Yau, \newblock{A New Conformal Invariant and Its Applications to the Willmore Conjecture and the First Eigenvalue of Compact Surfaces},
\newblock{\it Invent. Math.} {\bf 69} {\rm (1982)}, 269-291.

\bibitem[LL]{LL} E.H. Lieb, M. Loss, \newblock{{\it Analysis.}},
\newblock{A.M.S. Graduate Studies in Mathematics vol. 14, American Mathematical Society 1997}.

\bibitem[L1]{L1} J. Liouville, \newblock{Theoreme sur l'equation $dx^2 + dy^2 + dz^2 = \lambda(d\alpha^2+ d\beta^2 + d\gamma^2)$ },
\newblock{\it  J. Math. Pures Appl.} {\bf 1} {\rm (1850)}, no. 15, 103.

\bibitem[L2]{L2} J. Liouville, \newblock{Extension au cas des trois dimensions de la question du trac\'e g\'eographique.},
Note VI. - In: Monge, G. (ed.), {\it Applications de l'analyse \`a la g\'eom\'etrie}, Bachelier, Paris, {\rm1850}, 609-617

\bibitem[Ma]{Ma} V.G. Maz'ja,
\newblock{{\it Sobolev Spaces}},
\newblock{ Springer Series in Soviet Mathematics, Springer-Verlag, Berlin, 1985. xix+486 pp.}.

\bibitem[MS]{MS} V.G. Maz'ya, T.O. Shaposhnikova,
\newblock{{\it Theory of Sobolev multipliers. With applications to differential and integral operators}},
\newblock{Grundlehren der Mathematischen Wissenschaften 337, Springer Verlag 2009}.

\bibitem[M]{M} J. Milnor, \newblock{Eigenvalues of the Laplace operator on certain manifolds},
\newblock{\it Proc. Nat. Acad. Sci. U.S.A} {\bf 51} {\rm (1964)}, 542.

\bibitem[Mos]{Mos} G.D. Mostow, \newblock{Quasi-conformal mappings in $n$-space and the rigidity of hyperbolic space forms},
\newblock{\it  Inst. Hautes Ã‰tudes Sci. Publ. Math.} {\bf 34} {\rm (1968)}, 53-104.


\bibitem[Re1]{Re1} Y.G. Reshetnyak, \newblock{{\it Space Mappings with Bounded Distortion}},
\newblock{Translations of Mathematical Monographs, 73. AMS, Providence, RI, 1989. xvi+362 pp.}.

\bibitem[Re2]{Re2} Y.G. Reshetnyak, \newblock{Liouville's theorem on conformal mappings for minimal regularity assumptions},
\newblock{ {\it Sib. Math. J.}} {\bf 8} {\rm (1967)}, 631-634.

\bibitem[V]{V} J. V\"ais\"al\"a,
\newblock{{\it Lectures on $n$-quasi-conformal mappings}},
\newblock{Lecture Notes in Mathematics}, {\bf 229}
\newblock{Springer-Verlag, Berlin-New York,  1971. xiv+144 pp.}.

\bibitem[W]{W} H. Weyl,
\newblock{Das asymptotische Verteilungsgesetz der Eigenwerte linearer partieller Differentialgleichungen (mit einer Anwendung auf die Theorie der Hohlraumstrahlung)},
\newblock{{\it Math. Ann.}} {\bf 71} {\rm (1912)}, no. 4, 441-479.

\end{enumerate}
\end{document}